\newenvironment{pf}{\begin{proof}}{\end{proof}}
\newtheorem{Th}{Theorem}[section] 
\newtheorem{Lem}[Th]{Lemma}
\newtheorem{Cor}[Th]{Corollary} 
\newtheorem{Prop}[Th]{Proposition}
\newtheorem{Def}[Th]{Definition}
\newtheorem{Rem}[Th]{Remark}
\newcommand{\cA}{\mathcal{A}} 
\def\cC{\mathcal{C}} 
\def\cE{\mathcal E} 
\def\cF{\mathcal F}
\newcommand{\cI}{\mathcal{I}}
\newcommand{\cO}{\mathcal{O}}
\def\gE{{\mathfrak E}}
\def\gF{{\mathfrak F}}
\def\cringle{\mathaccent23}
\def\cI{\mathcal I }
\newcommand{\C}{\mathbb{C}} 
\newcommand{\N}{\mathbb{N}}
\renewcommand{\P}{\mathbb{P}} 
\newcommand{\Q}{\mathbb{Q}}
\newcommand{\R}{\mathbb{R}}
\newcommand{\Z}{\mathbb{Z}}
\def\ch{\mbox{ch}}
\def\Coh{\mbox{Coh}}
\def\Coker{\mbox{Coker}}
\def\cycle{\mbox{cycle}} 
\def\d{\mbox{d}}
 \def\deg{\mbox{deg}}
\def\Im{\mbox{Im}}
\def\Hom{\mbox{Hom}} 
\def\Ker{\mbox{Ker}}
 \def\max{\mbox{max}} 
 \def\min{\mbox{min}} 
\def\Pic{\mbox{Pic}}
\def\rank{\mbox{rank}}
\def\Spec{\mbox{Spec}}
\def\Supp{\mbox{Supp}}
\def\Tors{\mbox{Tors}}
\def\Td{\mbox{Td}}
\def\vol{\mbox{vol}}
\title{Bounded sets of sheaves on relative analytic spaces}
\author[Matei Toma]{Matei Toma}
\address{Matei Toma, 
Universit\'e de Lorraine, CNRS, IECL, F-54000 Nancy, France}
\email{Matei.Toma@univ-lorraine.fr}
\urladdr{\href{https://iecl.univ-lorraine.fr/membre-iecl/toma-matei/}{https://iecl.univ-lorraine.fr/membre-iecl/toma-matei/}}
 \date{\today}
\thanks{ AMS
  Classification (2000): 32J99; secondary: 14C05.}
\begin{document}

\begin{abstract}
We extend previous results on boundedness of sets of coherent sheaves on a compact K\"ahler manifold to the relative and not necessarily smooth case. This enlarged context allows us to prove properness properties of the relative Douady space as well as results related to semistability of sheaves such as the existence of relative Harder-Narasimhan filtrations.
\end{abstract}
\maketitle

\noindent

\section{Introduction}

In his paper \cite{Grothendieck-theHilbertScheme} constructing the Hilbert scheme Grothendieck introduced the notion of a bounded set $\gE$ of isomorphism classes of  coherent sheaves  on the fibers of $X/S$, where $X$ is a scheme of finite type over a noetherian scheme $S$. 
Roughly expressed this means that the elements of $\gE$ are among the fibres of an algebraic family of coherent sheaves in the fibers of $X_{S'}/S'$ after base change to a scheme $S'$ of finite type over $S$. 
Thus in the absolute case, i.e. when $S=\Spec \, k$,  this boils down to the existence  of an algebraic family of coherent sheaves  parameterized by a  scheme of finite type over $k$ whose set of  isomorphism classes includes  $\gE$.  
When $X$ is projective over $S$ with relatively very ample sheaf $\cO(1)$ Grothendieck further gave a criterion \cite[Th\'eor\`eme 2.1]{Grothendieck-theHilbertScheme} for a set $\gE$ of  isomorphism classes of coherent sheaves on the fibers of $X/S$ to be bounded saying that this is the case if and only if all elements $E\in\gE$ appear as quotients of a uniform sheaf of the form $\cO(-n)^{\oplus N}$ and their Hilbert polynomials range within a finite set of polynomials. This criterion is essential in proving that the connected components of the Hilbert scheme of a projective scheme are projective but also in the theory of moduli spaces of semi-stable sheaves.

This paper deals with boundedness for   
sets $\gE$  of isomorphism classes of analytic coherent sheaves over relative analytic spaces $X/S$. One may rewrite almost literally  Grothendieck's definition of boundedness in this new set-up, but this will find little use in applications. What is important in our analytic context is the  aspect of relative compactness hidden behind that definition. We will therefore say that a set $\gE$  of isomorphism classes of analytic coherent sheaves on the fibers of a relative analytic space $X/S$ is bounded if it can be viewed as a relatively compact subset of a suitable analytic parameter space; see 
the precise Definition \ref{def:bounded-relative-case1}. Our main result, Theorem \ref{theorem:principala}, then gives an analogue of Grothendieck's boundedness criterion \cite[Th\'eor\`eme 2.1]{Grothendieck-theHilbertScheme} in this set-up. For its formulation we replace the Hilbert polynomials by collections of degree functions, Definitions \ref{def:degrees} and \ref{def:relative-degrees}. We apply the boundedness criterion to properness questions on the Douady space as well as to problems arising in the study of semistable sheaves, such as the existence of relative Harder-Narasimhan filtrations and openness of semistability. 
Note that in a previous paper 
\cite{TomaLimitareaI} we proved properness of the connected components of the Douady space albeit in an absolute smooth  K\"ahler context. To obtain applications to moduli spaces of semistable sheaves however, boundedness results in the relative, not necessarily smooth case were needed and this is precisely what the present paper gives. 

An important device which allowed us to prove the main result of this paper as well as \cite[Theorem 4.1]{TomaLimitareaI} was the passage from bounding quotient sheaves of a fixed coherent sheaf to bounding volumes of their projectivizations and applying Bishop's Theorem to the corresponding cycle spaces. However, unlike in \cite{TomaLimitareaI}, where the volume calculations were done globally by intersecting appropriate cohomology classes on compact K\"ahler manifolds, we perform here finer local volume computations which allow the needed flexibility. Another new feature of this paper is the use of degree systems which do not necessarily come from a fixed K\"ahler polarization. The consideration of more general degree systems is natural when working for instance with mobile classes on projective manifolds as in \cite{CampanaPaun}, \cite{CampanaCaoPaun}, \cite{GrebToma}, or \cite{GRT1}, and imposes itself in the treatment of stability of coherent sheaves over compact smooth complex non-K\"ahler surfaces, cf. \cite{LuTe}, \cite{TomaCriteria}.

The paper is organized as follows. We start by introducing degree systems for relative analytic spaces and in particular those degree systems arising from integration of polarizing differential forms along homology Todd classes of coherent sheaves. In Sections \ref{section:volume} and \ref{section:quotient-sheaves} two  ingredients of the proof of our main result are presented in detail: the local volume computations in  Section \ref{section:volume}, which roughly ensure that in order to control a true volume of a projectivized bundle a bound on a certain pseudovolume suffices, and in Section \ref{section:quotient-sheaves} the description of the passage from quotient sheaves with bounded degrees to cycles in projectivized bundles with bounded volume and back. Follow the presentation of the boundedness notion and of the main boundedness criteria in Section \ref{sec:boundedness}. In the final section applications are given to properness properties of relative Douady spaces and to the existence of relative Harder-Narasimhan filtrations.


\section{Preliminaries}\label{section:Preliminaries}
\subsection{K\"ahler morphisms}
We shall review the definition of a K\"ahler metric on a possibly singular complex analytic space $X$ in the absolute and in the relative case and shall then introduce some weaker versions of such metrics.

We start by recalling the definition of differential forms on complex spaces which we shall use. For this definition differential forms are induced from local embeddings of $X$ in a domain $V$ of $\C^n$ and it is then checked that they are well defined.

\begin{Def} (Fujiki \cite[1.1]{FujikiClosedness}) Suppose that $X$ is embedded in a domain $V$ of $\C^n$ and let $\cI_X$ be its ideal sheaf. If $\cA_V^m$ denotes the sheaf of  differential $m$-forms on $V$, one defines the {\em sheaves of differential $m$-forms} by
$$\cA_X=\cA^0_X:=\cA_V^0/(\cI_X+\bar\cI_X)\cA^0_V$$ 
and for $m>0$
$$\cA_X^m:=\cA^m_V/((\cI_X+\bar\cI_X)\cA^m_V+\d(\cI_X+\bar\cI_X)\cA^{m-1}_V).$$
\end{Def}

For further properties of these sheaves we refer the reader to  \cite[1.1]{Var89}, \cite[III.2.4]{BarletMagnusson}. We just mention the fact that type decomposition as well as exterior differentiation and the operators $\partial$ and $\bar\partial$ are well defined.

\begin{Def}
The subsheaves of $\cA_X$ of functions induced locally from smooth strongly plurisubharmonic (respectively from pluriharmonic) functions on $V$ will be denoted by $SPSH_X$ (resp. by $PH_X$).  
If $f:X\to S$ is a morphism of complex spaces we define the sheaves $SPSH_f$ and $PH_f$ as the subsheaves of functions of $\cA_X$ which are strongly plurisubharmonic (respectively pluriharmonic) on the fibers of $f$. We will call such functions {\em plurisubharmonic (resp. pluriharmonic) relatively with respect to $f$.}
\end{Def}

\begin{Def}\label{def:kaehler-morphism} (\cite[Definition 2.1]{Fujiki-Space-of-Divisors},\cite[5.1]{Bin83}) A morphism $f:X\to S$  of complex spaces is said to be {\em K\"ahler} if there exist an open cover $(U_i)_{i\in I}$ of $X$ and sections $\phi_i\in SPSH_f(U_i)$ such that $(\phi_i-\phi_j)|_{U_i\cap U_j}\in PH(U_i\cap U_j)$ for all $i,j\in I$. When the differences $\phi_i-\phi_j$ are only required to be in $PH_f(U_i\cap U_j)$ we say that $f$ is {\em weakly K\"ahler}. Such a data $(U_i,\phi_i)_{i\in I}$ is called a {K\"ahler metric} (resp. a {\em weakly K\"ahler metric}) for $f$. When $S$ is a point the two notions coincide and we say then that $X$ is a {\em K\"ahler space}.
\end{Def}

If $(U_i,\phi_i)_{i\in I}$ is a K\"ahler metric for $f:X\to S$ then the $(1,1)$-forms $i\partial\bar\partial\phi_j$ glue together to give a closed $(1,1)$-form $\omega$ on $X$ which we call {\em the
K\"ahler form of the metric}.  
If $(U_i,\phi_i)_{i\in I}$ is only weakly K\"ahler, we only get a "relative" $(1,1)$-form, cf. \cite{Bin83}.
 Note that in both cases the restriction of $\omega$ to the fibers of $f$ is {\em strictly positive}, in the sense that for local embeddings of the fibers in open domains of number spaces the form appears as the resriction of a strictly positive form. We shall use the analogous definition of positivity (in Lelong's sense) more generally for $(p,p)$-forms and in particular for the powers of $\omega$, cf. \cite{BarletMagnusson}.


\subsection{Homology classes and degrees}\label{sectionchern}

The Grothendieck-Riemann-Roch theorem for singular varieties was proved by Baum, Fulton and MacPherson \cite{BFM75}, \cite{BFM79}
in the projective case and by Levy \cite{Levy1987} in the complex analytic case. 
One way to formulate it is that there exists a natural transformation of functors $\tau:K_0\to H_\Q:=H_{2*}( \ ;\Q)$ such that for any compact complex space $X$ the diagram 
$$
  \xymatrix{K^0X\otimes K_0X \ar[r]^{\otimes}\ar[d]^{\ch\otimes\tau} 
& K_0X \ar[d] ^{\tau}\\
H^{2*}(X;\Q)\otimes H_{2*}(X;\Q) \ar[r]^{ \ \ \ \ \ \ \frown} & H_{2*}(X;\Q)
 }
  $$ 
commutes and if $X$ is nonsingular then $\tau(\cO_X)=\Td(X)\frown[X]$, where
$K^0X$, $K_0X$ are the Grothendieck groups generated by holomorphic vector bundles and coherent sheaves respectively and $\Td(X)$ is the (cohomology) Todd class of the tangent bundle to $X$. Naturality means that for each proper morphism
$f:X\to Y$ of complex spaces the diagram
$$
  \xymatrix{ K_0X \ar[r]^{\tau}\ar[d]^{f_!} 
& H_{\Q}(X) \ar[d] ^{f_*}\\
K_0Y\ar[r]^{\tau} & H_{\Q}(Y)
 }
  $$ 
commutes, where $f_!$ is defined by $f_!([\cF])=\sum_i(-1)^i [R^if_*(\cF)]$ for any coherent sheaf $\cF$ on $X$. (In the non-compact case $\tau$ takes values in the Borel-Moore homology, \cite{Iversen}, \cite[19.1]{Fulton}.)  
It is  furthermore shown in \cite{Fulton} in the algebraic case and in \cite{Levy-bivariant} in the complex analytic case that $\tau$ may be extended as a Grothendieck transformation between bivariant theories in the sense of \cite{FultonMacPh}.
We refer to the original papers and to the books \cite{Fulton}, \cite{FL}, \cite{DV} for a thorough treatment of these facts.

For a coherent sheaf $\cF$ on a compact complex space $X$ we shall call $\tau(\cF):=\tau([\cF])$ the {\em homology Todd class} of $\cF$. 
The following Proposition gathers some of its properties, cf. \cite[Section 2]{TomaLimitareaI}. 
\begin{Prop} \label{prop:Todd}
\begin{enumerate}
\item If $\cF$ is locally free and $X$ is smooth and connected, $\tau(\cF)$ is the Poincar\'e dual of $ch(\cF)\cdot\Td(X)\in H^{2*}(X;\Q)$.
\item If $f:X\to Y$ is an embedding then $\tau(f_*\cF)=f_*(\tau(\cF))$.
\item $\tau(\cF)_r=0$ for $r>\dim \Supp\cF$.
 \item $\tau$ is additive on exact sequences.
\item If $X$ is irreducible then $\tau(\cF)_{\dim X}=\rank(\cF)[X]\in H_{2\dim X}(X;\Q) $.
\item  The component of  $\tau(\cF)$ in degree $\dim \Supp\cF$ is the homology class of an effective analytic cycle.
\end{enumerate}
\end{Prop}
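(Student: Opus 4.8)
The plan is to derive all six properties from the two commutative diagrams characterizing $\tau$---the module (projection) formula on top and naturality under proper pushforward---together with the normalization $\tau(\cO_X)=\Td(X)\frown[X]$ on the smooth locus and the grading of $H_\Q=H_{2*}$. Three of the items are formal. Additivity (4) holds because $\tau$ is a homomorphism out of the Grothendieck group $K_0X$, so a short exact sequence, which enforces $[\cF]=[\cF']+[\cF'']$, maps to the corresponding additive relation. For (2) note that an embedding $f$ is finite, whence $R^if_*\cF=0$ for $i>0$ and $f_![\cF]=[f_*\cF]$; the naturality square then reads $\tau(f_*\cF)=f_*\tau(\cF)$. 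For (1), I would insert $[\cF]\otimes[\cO_X]$ into the module square with $\cF$ locally free, obtaining $\tau(\cF)=\ch(\cF)\frown\tau(\cO_X)=\ch(\cF)\frown(\Td(X)\frown[X])$; associativity of the cap product rewrites this as $(\ch(\cF)\cdot\Td(X))\frown[X]$, the asserted Poincar\'e dual.

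Part (3) comes from factoring $\tau$ through the support. Endowing $Z:=\Supp\cF$ with a closed analytic subspace structure and writing $i:Z\hookrightarrow X$, one has $\cF\cong i_*\cG$ for some coherent $\cG$ on $Z$, so by (2) $\tau(\cF)=i_*\tau(\cG)$ lies in the image of $H_{2*}(Z;\Q)$; as $\dim Z=\dim\Supp\cF$, this image vanishes in degrees exceeding $\dim\Supp\cF$. For (5) I set $n=\dim X$ and first establish $\tau(\cO_X)_n=[X]$: choosing an analytic resolution $\pi:\tilde X\to X$, which is proper and bimeromorphic, the higher direct images $R^i\pi_*\cO_{\tilde X}$ and the defect of $\pi_*\cO_{\tilde X}$ from $\cO_X$ are supported over the exceptional and non-normal loci, of dimension $<n$, so $\pi_![\cO_{\tilde X}]=[\cO_X]+(\text{classes supported in dimension}<n)$; extracting degree $n$ in the naturality identity $\tau(\pi_!\cO_{\tilde X})=\pi_*\tau(\cO_{\tilde X})$ and using the smooth case $\tau(\cO_{\tilde X})_n=[\tilde X]$ together with (3) gives $\tau(\cO_X)_n=\pi_*[\tilde X]=[X]$. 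For a general $\cF$ of generic rank $r$, a d\'evissage along the generic point of the irreducible $X$ shows that $[\cF]-r[\cO_X]$ is represented in $K_0X$ by sheaves supported in dimension $<n$; by (4) and (3) these carry no degree-$n$ component, so $\tau(\cF)_n=r\,\tau(\cO_X)_n=r[X]$.

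Property (6) is then obtained by the same d\'evissage carried out along the top-dimensional components. Writing $d=\dim\Supp\cF$ and letting $Z_1,\dots,Z_k$ be the $d$-dimensional irreducible components of $\Supp\cF$, I would filter $\cF$ by coherent subsheaves whose successive quotients are either supported in dimension $<d$ or isomorphic to $(\iota_{Z_j})_*\cG_j$ with $\cG_j$ of generic rank $m_j$ on the irreducible $Z_j$, where $m_j$ is the length of $\cF$ at the generic point of $Z_j$. Applying (3) to the quotients of the first kind and the combination of (2) and (5) to those of the second kind, the degree-$d$ component collapses to $\tau(\cF)_d=\sum_j m_j[Z_j]$; since the multiplicities $m_j$ are nonnegative integers, this is the class of an effective analytic cycle.

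The main obstacle I anticipate is transporting the two "modulo lower-dimensional support" reductions used in (5) and (6)---the generic-rank d\'evissage and the filtration by top-dimensional components---from the algebraic to the complex-analytic, possibly non-algebraic setting, and likewise bootstrapping $\tau(\cO_X)_n=[X]$ through a resolution. These steps rest on the analytic Grothendieck--Riemann--Roch of Levy and on analytic resolution of singularities rather than on the projective inputs of Baum--Fulton--MacPherson, and verifying that the coherent d\'evissage and the localization behavior of $\tau$ remain valid analytically is where the real work lies.
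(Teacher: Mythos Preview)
The paper does not prove this proposition in the text; it refers to \cite[Section~2]{TomaLimitareaI}. Your arguments for items (1)--(4) are the standard derivations from the axioms for $\tau$ and are correct, as is the resolution argument for $\tau(\cO_X)_n=[X]$ in (5) and the filtration you describe for (6) once (5) is available on each irreducible component.

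The concern you flag at the end is precisely where the real issue lies, and it sits in your Step~2 of (5): the claim that $[\cF]-r[\cO_X]$ is represented in $K_0(X)$ by sheaves of lower-dimensional support is a localisation statement for analytic $K_0$ that is not obviously available---on a compact non-projective $X$ there need be no global morphism between $\cF$ and $\cO_X^r$ that is generically an isomorphism, so the generic-point d\'evissage has no direct global incarnation. The standard way around this is to use instead the compatibility of $\tau$ with restriction to open subsets, which accompanies naturality in the Baum--Fulton--MacPherson and Levy formalism. Restricting to a dense open $U$ contained in the smooth locus of $X$ on which $\cF$ is locally free, item (1) gives $\tau(\cF|_U)_n=r[U]$ in Borel--Moore homology; since $\dim(X\setminus U)<n$, the restriction map $H_{2n}(X;\Q)\to H^{BM}_{2n}(U;\Q)$ is injective, and comparison with $r[X]|_U=r[U]$ yields $\tau(\cF)_n=r[X]$ without any global $K_0$-d\'evissage. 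With (5) thus secured your filtration argument for (6) goes through unchanged.
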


\begin{Def}\label{def:multipolarization}
Let $X$ be an analytic space of dimension $n$ endowed with a system of differential forms $(\omega_1,...,\omega_{\dim(X)})$ such that each $\omega_{p}$ is $\d$-closed of degree $2p$ with   $(p.p)$-component $\omega_p^{p,p}$ which is strictly positive in Lelong's sense; cf. \cite[III.2.4, IV.10.6]{BarletMagnusson}. We call such an $(X,\omega_1,...,\omega_{\dim(X)})$ a {\em multi-polarized analytic space}. We will moreover always put $\omega_p=0$ if $p>\dim(X)$ and $\omega_0=1$. 
For any coherent sheaf $\cF$ with compact support on a multi-polarized analytic space $(X,\omega_1,...,\omega_{\dim(X)})$ and any non-negative $p$ we define the $p$-{\em degree}  of $\cF$ by 
$\deg_p(\cF):=\int_{\tau_p(\cF)}\omega_p$, where the integral is computed on a semianalytic representative of $\tau_p(\cF)$, cf. \cite{BlHe69}, \cite{DoPo77}, \cite[Thm. 7.22]{AG}, \cite[8.4]{Gor81},\cite{Her66}. 
\end{Def}

Occasionaly we will use positive $(p,p)$-forms instead of strictly positive forms. In such cases we will speak of {\em pseudo-degrees} and {\em pseudo-volumes} instead of degrees and volumes.

When $(X,\omega)$ is a K\"ahler space, cf. \cite[II.1.2]{Var89}, we will consider  its standard multi-polarization given by $(\omega,\omega^2,...,\omega^{\dim(X)})$. When
 $(X, \cO_X(1))$ is a projective variety endowed with an ample line bundle, by taking $\omega$ a strictly positive curvature form of $\cO_X(1)$, we recover the coefficient of the Hilbert polynomial of $\cF$ in degree $p$ as 
$$
\frac{\deg_p(\cF)}{p!}.
$$

One further property we shall need is the invariance of degrees in a flat family of coherent sheaves. For this we need some preparations. We start with a statement which is an analogue of \cite[Example 18.3.8]{Fulton} in our context. 
We shall work on the category $\cC$ of complex analytic spaces which have the topology of finite CW-complexes. For the bivariant theories we will consider on $\cC$ the  {\em confined morphisms} will be the proper morphisms  in $\cC$ and the {\em independent squares } will be the fiber squares in $\cC$; see \cite{FultonMacPh} for the terminology. We will use the two operational bivariant theories $K$ and $H$ on $\cC$ induced respectively by the homological theories $K_0$ and $H_\Q$ as in \cite[I.7]{FultonMacPh}. By \cite{Levy-bivariant} $\tau$ extends to a Grothendieck transformation between these bivariant theories, which we denote again by $\tau$. Restriction to fibers will be understood with respect to either of these bivariant theories. Note that a topological bivariant theory on $\cC$ may be also constructed from the standard cohomology theory and induces the usual homology theory, see \cite[Remark 3.1.9]{FultonMacPh}. In particular specialization in this theory as described in \cite[I.3.4.4]{FultonMacPh} induces specialization in the theory $H$; cf. \cite[I.8.2]{FultonMacPh}.

\begin{Prop} \label{plat}
 Let $X\to T$ be a proper morphism in $\cC$ with $T$ smooth and let $\cE$ be a coherent sheaf on $X$ flat over $T$. 
Then  $\tau(\cE_t)=\tau(\cE)_t$ in  $H_{\Q}(X)$ for all $t\in T$.
\end{Prop}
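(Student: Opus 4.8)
The plan is to realise both sides as the image of $[\cE]\in K_0X$ under, respectively, the $K$-theoretic and the homological Gysin (specialization) homomorphisms attached to the regular embedding of the point, and then to interchange the two by invoking the fact that $\tau$ is a Grothendieck transformation. First I would fix, in a neighbourhood of $t$, the embedding $i:\{t\}\hookrightarrow T$; this is a regular embedding of codimension $n=\dim_t T$, with normal bundle $N_i=T_tT$, precisely because $T$ is smooth. Completing it to the fibre square with $j:X_t\hookrightarrow X$, $g:X_t\to\{t\}$ and $f:X\to T$, the operation $(-)_t$ on $H_\Q(X)$ is by definition the specialization to the fibre, i.e. the refined Gysin homomorphism $i^!$ associated with $i$ through this square (the specialization of \cite[I.3.4.4]{FultonMacPh} in the theory $H$), and I will likewise use the $K$-theoretic Gysin homomorphism $i^!_K$ on $K_0X$, both being produced by the canonical orientation of $i$ in the respective bivariant theories.

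The two inputs are then the following. \emph{Flatness} gives $i^!_K[\cE]=[\cE_t]$ in $K_0X_t$: choosing a regular system of parameters $z_1,\dots,z_n$ at $t$, the pulled-back sequence $f^*z_1,\dots,f^*z_n$ is $\cE$-regular because $\cE$ is $T$-flat, so the Koszul complex computing the derived restriction $Lj^*\cE$ is concentrated in degree $0$ and equals $\cE_t$; thus all the higher $\Tor$-terms entering $i^!_K[\cE]$ vanish. \emph{The Grothendieck transformation property} gives that $\tau:K\to H$, commuting with the bivariant product, commutes with $i^!$ up to the Todd class of (the pullback of) $N_i$. Here the geometry intervenes decisively: since $\{t\}$ is a point, $N_i$ is a trivial bundle, whence $\td(N_i)=1$ and no correction survives; concretely $\tau([i]_K)=[i]_H$ for the canonical orientations, so $\tau\circ i^!_K=i^!\circ\tau$ as maps $K_0X\to H_\Q(X_t)$.

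Combining the two, $\tau(\cE_t)=\tau(i^!_K[\cE])=i^!\,\tau([\cE])=\tau(\cE)_t$, the identity holding between classes carried by $X_t$ and hence, after applying $j_*$, in $H_\Q(X)$ as stated. (As a sanity check, the degree shift matches: for $\cE=\cO_X$ in a product situation $i^!$ selects the top, degree-$2n$ part of $\td(T)\frown[T]$, namely $[T]$, and returns $\td(X_t)\frown[X_t]=\tau(\cO_{X_t})$, with no residual Todd factor — exactly the triviality of $N_i$ at work.)

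The main obstacle is the second input: making precise, in the complex-analytic bivariant framework of \cite{Levy-bivariant}, that $\tau$ interchanges the $K$-theoretic and homological specializations with \emph{no} residual Todd factor. This rests on two points to be checked with care — that the specialization defining $(-)_t$ in $H$ is indeed the one induced from the $K$-theoretic Gysin via $\tau$, which is ensured by the compatibility of the theories $K$, $H$ and the topological theory recalled just before the statement; and that the Todd correction for a regular embedding, which for general $i$ would be $\td(N_i)$, is trivial precisely because the normal bundle of a point in a smooth space is trivial. Once these are in place the rest is formal, and the argument is the analytic counterpart of \cite[Example 18.3.8]{Fulton}.
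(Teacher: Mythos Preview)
Your argument is correct, but it follows a different path from the paper's. You orient the inclusion $i:\{t\}\hookrightarrow T$ using the regular embedding structure coming from smoothness of $T$, compute $i^!_K[\cE]=[\cE_t]$ via the Koszul resolution, and then check that the Todd correction $\td(N_i)$ relating $\tau\circ i^!_K$ to $i^!\circ\tau$ is trivial because $N_i$ sits over a point. The paper instead orients the \emph{other} morphism in the square: it shows that flatness of $\cE$ over $T$ makes $[\cE]$ into an operational bivariant class $\alpha\in K(X\to T)$, by setting $\alpha_{T'}(\cF)=\cF_{X'}\otimes\cE_{X'}$ for each base change $T'\to T$ and verifying the required compatibility with proper pushforward through the base-change isomorphism $\cE_{X'}\otimes f'^*R^ih_*\cF\cong R^ih'_*(\cE_{X''}\otimes\cF_{X''})$ (this is where flatness enters, via the argument of \cite[Proposition III.9.3]{Hartshorne}). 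Once $\alpha$ is bivariant, applying $\tau$ and evaluating on $[\cO_T]$ and on $[\cO_{\{t\}}]$ gives the equality directly, with no Todd bookkeeping. What your route buys is concreteness and an explicit link to the Riemann--Roch formula for l.c.i.\ morphisms; what the paper's route buys is that the Todd issue never arises, since the bivariant class is built from $\cE$ itself rather than from the geometry of the point inclusion.
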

\begin{proof}
The sheaf $\cE$ defines a class $\alpha=[\cE]$ in $K(X\to T)$ in the folllowing way. For any fiber square
$$
 \xymatrix{X' \ar[r]^{g'} \ar[d]^{f'} & X \ar[d]^{f} \\
T' \ar[r]^{g} & T }
$$ 
we define $\alpha_{Y'}=[\cE]_{Y'}:K_0(Y')\to K_0(X')$ by  $\cF\mapsto \cF\otimes_Y\cE:=\cF_{X'}\otimes \cE_{X'}$. In order for $\alpha$ to be a class in $K(X\to T)$ the following compatibility conditions for the morphisms $\alpha_{Y'}$ need to be satisfied: if
$$
 \xymatrix{X''\ar[r]^{h'}\ar[d]^{f''} & X' \ar[r]^{g'} \ar[d]^{f'} & X \ar[d]^{f} \\
T''\ar[r]^h & T' \ar[r]^{g} & T }
$$ 
is a fiber diagram with $h$ proper, then 
$$
\xymatrix{K_0(T'') \ar[r]^{\alpha_{T''}} \ar[d]^{h_*} & K_0(X'') \ar[d]^{h'_*} \\
K_0(T') \ar[r]^{\alpha_{T'}} & K_0(X') }
$$
must commute. This holds because by the flatness of $\cE$ over $T$ one has canonical isomorphisms on $X'$
$$\cE_{X'}\otimes f'^*(R^ih_*\cF)\cong R^ih'_*(\cE_{X''}\otimes\cF_{X''})$$
for all coherent sheaves $\cF$ on $X''$ and all $i\in\N$. The proof of this fact goes exactly as that of \cite[Proposition III.9.3]{Hartshorne} using the flatness of $\cE_{X'}$ over $T'$ instead of  the flatness of $\cO_{X'}$.

Applying now the Grothendieck transformation $\tau:K\to H$ to $\alpha$, the class of $\cO_T$ and the map $t\to T$ gives the desired equality of classes. 
\end{proof}

\begin{Cor}\label{cor:constance}
Let $f:X\to T$ be a proper morphism of reduced irreducible complex spaces and let $\cE$ be a coherent sheaf on $X$ which is flat over $T$. Suppose that $X$ is endowed with a system $(\omega_j)_{j\ge1}$ of closed differential forms inducing multi-polarizations on each fibre $X_t$ of $f$. Then the degree functions $t \mapsto \deg_p(\cE_t)$ computed with respect to these polarizations are constant on $T$ for each positive $p$.
\end{Cor}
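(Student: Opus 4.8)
The plan is to reduce to the case of a smooth base and then to run everything through the bivariant formalism already set up for Proposition \ref{plat}. Since $T$ is reduced and irreducible it is connected, and its regular locus $T_{\mathrm{reg}}$ is a dense connected open subset. First I would reduce to a \emph{smooth connected} base, in either of two ways: base change along a resolution of singularities $\pi\colon\tilde T\to T$ (with $\tilde T$ smooth, connected because $T$ is irreducible, and $\pi$ proper and surjective), or, more elementarily, test the statement along maps $\nu\colon C\to T$ from smooth connected curves, using that any two points of an irreducible space lie on a connected chain of irreducible curves meeting $T_{\mathrm{reg}}$. In either case the pulled-back sheaf stays flat over the new base, the fibre over a point maps isomorphically onto the original fibre carrying the same restricted sheaf, and the forms $\omega_j$, pulled back to the fibre product, restrict to the very same multi-polarizations on these fibres; hence the degrees are unchanged and constancy over the smooth base yields constancy over $T$.

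So assume now that $T$ is smooth and connected. For a fixed $p$ I would first rewrite the degree as a topological pairing: since $\omega_p$ is $\d$-closed on $X$ it defines a class $[\omega_p]\in H^{2p}(X;\R)$, and by Definition \ref{def:multipolarization} together with the integration theory over semianalytic representatives one has
$$\deg_p(\cE_t)=\int_{\tau_p(\cE_t)}\omega_p=\bigl\langle \tau_p(\cE_t),\,[\omega_p]|_{X_t}\bigr\rangle,$$
the de Rham pairing of $\tau_p(\cE_t)\in H_{2p}(X_t;\Q)$ with the restricted closed form. By Proposition \ref{plat} we have $\tau(\cE_t)=\tau(\cE)_t$, so $\tau_p(\cE_t)$ is the fibrewise restriction, in the topological bivariant theory $H$, of the single bivariant class $\tau([\cE])\in H(X\to T)$ supplied by that proof. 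The mechanism I would then use is bivariant pushforward (with real coefficients): viewing $[\omega_p]\in H^{2p}(X)=H^{2p}(X\xrightarrow{\mathrm{id}}X)$ and composing with $\tau([\cE])$ yields $[\omega_p]\cdot\tau([\cE])\in H(X\to T)$, and pushing forward along the proper (hence confined) morphism $f$ produces a single class $\beta_p:=f_*\bigl([\omega_p]\cdot\tau([\cE])\bigr)\in H(T\to T)=H^{*}(T;\R)$. By the bivariant axioms for independent squares, restricting $\beta_p$ to a point $t$ equals first restricting to the fibre and then integrating over $X_t$; combined with Proposition \ref{plat} and the naturality of the cohomology restriction of $[\omega_p]$ this gives $i_t^{*}\beta_p=\int_{X_t}[\omega_p]|_{X_t}\frown\tau(\cE_t)=\deg_p(\cE_t)$, the cap product selecting exactly the degree-$p$ component $\tau_p$. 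Finally $i_t^{*}\colon H^{*}(T;\R)\to H^{*}(\{t\})=\R$ factors through the projection onto $H^0(T;\R)$, so $i_t^{*}\beta_p$ depends only on the $H^0$-component of $\beta_p$; as $T$ is connected, $H^0(T;\R)=\R$ and this number is the same for every $t$, whence $\deg_p(\cE_t)$ is constant.

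The hard part, and the step on which I would spend the most care, is the identification in the second paragraph: matching the differential-geometric definition of $\deg_p$ — integration of the closed form $\omega_p$ over a semianalytic representative of $\tau_p(\cE_t)$ on the possibly singular fibre $X_t$ — with the topological bivariant pairing, and verifying that the fibrewise restriction of $\beta_p$ computes exactly this number. Concretely one must check that the de Rham pairing on $X_t$ agrees with the homology/cohomology pairing to which the topological bivariant theory specializes (cf. the remarks in the excerpt on the topological theory built from standard cohomology), and that specialization in $H$ is simultaneously compatible with Proposition \ref{plat} (restriction of $\tau([\cE])$) and with the restriction of the fixed class $[\omega_p]$. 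Once this compatibility is secured the connectivity argument is immediate, and the base reductions of the first paragraph are routine base-change bookkeeping on flatness, fibres, and forms.
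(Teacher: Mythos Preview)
Your approach and the paper's diverge after the reduction to a smooth base and the invocation of Proposition~\ref{plat}. The paper localizes further: it takes $T$ to be the unit disc $\Delta$, invokes Thom's First Isotopy Lemma to arrange that $X\setminus f^{-1}(0)\to\Delta\setminus\{0\}$ is a topologically locally trivial fibration, and then compares $\deg_p(\cE_t)$ with $\deg_p(\cE_0)$ directly using the concrete description of specialization in \cite[I.8.2]{FultonMacPh}, the closedness of the $\omega_p$, and Stokes' theorem on semianalytic chains (cf.\ \cite{Gor81}). No global cohomology class for $\omega_p$ is ever formed; the argument stays at the level of integrating a fixed closed form over cycles in the total space, with the homotopy between the pushed-forward fibre classes supplied by the topological trivialization.

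Your global route via a bivariant pushforward $\beta_p\in H(T\to T)$ is conceptually appealing, but the step you flag as ``the hard part'' carries real weight, and the paper's argument is precisely designed to bypass it. Two concrete concerns: first, the theory $H$ in which $\tau([\cE])$ lives is the \emph{operational} bivariant theory induced from $H_\Q$ as in \cite[I.7]{FultonMacPh}, and for such theories the identification $H(T\to T)\cong H^*(T;\R)$ you use is not automatic; you would have to transfer to the separate \emph{topological} bivariant theory mentioned in the paper and verify that $\tau([\cE])$ and the relevant operations are compatible with that passage. Second, asserting that a closed differential form on the possibly singular total space $X$ (in the sense of restriction from local ambient embeddings) defines a singular cohomology class, rather than merely a linear functional on homology via integration over semianalytic representatives, is not immediate. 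Neither of these is obviously fatal, but neither is routine, and the paper's disc-plus-Stokes argument sidesteps both. What your approach would buy, if completed, is a single invariant in $H^0(T;\R)$; what the paper's buys is a short argument that stays close to the integration-theoretic definition of the degrees and requires only the closedness of $\omega_p$ together with a standard stratification-theoretic input.
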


\begin{proof}
The statement is local on $T$. 
In fact for our purposes $T$ may be assumed to be the complex disc $\Delta$. 
Moreover using Thom's First Isotopy Lemma we may also assume that the induced map $X\setminus f^{-1}(0)\to \Delta\setminus\{ 0\}$ is a topologically locally trivial fibration, cf. \cite[1.3.6]{Dimca-Singularities}, \cite[p. 179]{Dimca-Sheaves}. By the Proposition $\tau(\cE_t)=\tau(\cE)_t$ in $H_\Q(X)$. We have to check that $\deg_p(\cE_t)=\deg_p(\cE_0)$ for any $t\in\Delta\setminus\{ 0\}$.  This follows now using the description of specialization of homology given in \cite[I.8.2]{FultonMacPh}, the fact that the forms $\omega_p$ are closed and Stokes' theorem in this context, see e.g. \cite[8.4]{Gor81}.
\end{proof}

Note that a system of closed differential forms as in the above Corollary is directly obtained as soon as the morphism $f$ is K\"ahler; see Definition \ref{def:kaehler-morphism} and comments thereafter. 

\subsection{Degree systems}
\label{subsection:degree-systems}

For technical reasons mainly related to our local volume computations the degrees of coherent sheaves used in this paper will be given by fixed positive differential forms on the base complex spaces. In this subsection we will nevertheless attempt at a more general definition which we believe to be naturally adapted to our considerations.
 
Let $X$ be a compact analytic space of dimension $n$ and let $d$, $d'$ be integers satisfying 
$n\ge d\ge d'\ge 0$. 
We denote by $[F]$ the class in $K_0(X)$ of a coherent sheaf $F$. If $F$ has dimension at most $p$, we write $\cycle_p(F)$ for the $p$-cycle associated to $F$.  

\begin{Def} Degrees.
\label{def:degrees}
A group morphism 
$\deg_p:K_0(X)\to\R$ will be called a 
{\em degree function in dimension $p$} on $X$ if it enjoys the following properties:
\begin{enumerate}
\item $\deg_p$ induces a positive map on non-zero $p$-cycles when putting $\deg_p(Z):=\deg_p([\cO_Z])$ for irreducible $p$-cycles $Z$
\item $\deg_p([F])=\deg_p(\cycle_p(F))$ for any coherent sheaf $F$ of dimension at most $p$ on $X$,
\item if a set of positive $p$-cycles is such that $\deg_p$ is bounded on it, then $\deg_p$ takes only finitely many values on this set, 
\item $\deg_p$ is 
continuous on flat families of sheaves.
\end{enumerate}
If $\deg_p$ is even locally constant 
on flat families of sheaves, we will say that it is a  {\em strong degree function}.
We will write for simplicity $\deg_p(F)=\deg_p([F])$ for any $F\in \Coh(X)$.

A collection $(\deg_d,...,\deg_{d'})$ of (strong) degree functions in dimensions $d$ to $d'$ on $X$ will be called a {\em (strong) $(d,d')$-degree system}. An $(n,0)$-degree system will be called a {\em  complete degree system}.
\end{Def}

Note that for any degree function $\deg_p$ in dimension $p$ on $X$ one has $\deg_p(F)>0$  if $F$ is $p$-dimensional, and $\deg_p(F)=0$ if $F$ is at most $(p-1)$-dimensional.

\begin{Def} Relative degrees.
\label{def:relative-degrees}
If $X$ is a complex space proper over a complex space $S$ a family $(\deg_{d,s},...,\deg_{d',s})_{s\in S}$ of degree systems parameterized by $S$ will be called a {\em relative degree system} on $X/S$ if each degree function $\deg_{\delta,s}$,  $d\ge\delta\ge d'$, 
 is 
continuous on flat families of sheaves over $S$, i.e. on flat families over any $S'$ after any base change $S'\to S$. Such a relative degree system will be said to be {\em strong} if its degree functions are locally constant 
on flat families of sheaves and it will be said to be {\em complete} if the dimensions of the degree functions range from the relative dimension of $X/S$ to $0$.
\end{Def}

One can also think of introducing degree systems depending on real parameters,  as in deformation theory for instance, where one defines a notion of a family of complex manifolds over a real manifold. In this paper we will only deal with product situations of this type. In such a case we 
define continuous families of relative degree systems as follows. If $X$ is a complex space proper over a complex space $S$ and if $T$ is a locally compact topological space a family $(\deg_{d,(s,t)},...,\deg_{d',(s,t)})_{(s,t)\in S\times T}$ of degree systems parameterized by $S\times T$ will be said to be a {\em continuous family of relative degree systems}  
if for each $t\in T$ the restricted family is a relative degree system on $X/S$ and for any  flat families of sheaves over $S$ the corresponding evaluations of the degree functions of the system  are continuous on $S\times T$.

Our basic example of a strong complete degree system is the one defined as in Definition \ref{def:multipolarization} on a multi-polarized analytic space $(X,\omega_1,...,\omega_{\dim(X)})$. In particular a compact K\"ahler space $(X,\omega)$ has a standard strong complete degree system.
 The degree functions of such systems satisfy condition (3) of Definition \ref{def:degrees} since they are constant on connected components of the corresponding cycle spaces and since no analytic space can have an infinite number of irreducible components accumulating at a point. The fact that they are constant on flat families of sheaves follows from Corollary \ref{cor:constance}. Strong complete relative degree systems appear in a situation as in the hypothesis of Corollary \ref{cor:constance}, in particular in the case of proper K\"ahler morphisms. Weakly K\"ahler morphisms give rise only to complete relative degree systems.

The condition on the positive differential forms defining degree functions of being $\d$-closed may be relaxed to asking only $\partial\bar\partial$-closure when one works on an ambient compact complex manifold. In such a situation using Property (1) of Proposition \ref{prop:Todd} one can replace in the definition of degrees homology Todd classes of coherent sheaves by Chern classes in Bott-Chern cohomology.  One can see that
 Condition (3) of Definition \ref{def:degrees} is  satisfied by the same argument as in the K\"ahler case. Indeed such a  function is pluriharmonic on the cycle space by \cite[Proposition 1]{BarletConvexitate} and attains its minimum on any closed subset of the cycle space by Bishop's theorem. It is therefore constant on any irreducible component of this space. In particular any smooth compact complex surface may be endowed with a complete degree system by fixing a Gauduchon metric on it.

Whenever degree functions or systems are induced by fixing strictly positive differential forms as in the above cases, we will say that they {\em come from differential forms}. The results of this paper will be established only for such degree systems, since essential use will be made of local volume computations as well as of (co)-homological projection formulas and duality as in Section \ref{section:quotient-sheaves}. It would be interesting to formulate abstract conditions for degree functions allowing to prove similar results.  

\section{Metric computations}\label{section:volume}

In this section we will consider positivity issues for some forms appearing naturally on total spaces of K\"ahler mappings, particularly on projectivized bundles.  
Positivity and strict positivity for $(p,p)$-forms on a manifold will be considered in the sense of Lelong, i.e. by asking non-negativity, respectively positivity, on each  complex tangent $p$-plane, cf. \cite[D\'efinition 10.6.1]{BarletMagnusson}. 

The main objective is to show that it is enough to bound a certain pseudo-volume of projectivized bundles in order to get a true volume bound. More precisely, we fix a compact irreducible analytic space $X$ of dimension $n$ and a coherent sheaf $G$ on $X$. (Actually the irreducibility assumption is made here only for simplicity.) In Proposition \ref{prop: forme pozitive pe spatii relative} we endow $\P(G)$ with a certain positive, not necessarily strictly positive, differential form $\Omega$ which depends on two fixed strictly positive forms $\omega_n$, $\omega_{n-1}$ on $X$ and on a metric $h$ on $G$. For each rank one quotient $F$ of $G$ we consider the pseudo-volume of the subspace $\P(F)$ of $\P(G)$ computed by integrating $\Omega$ on $\P(F)$. We want to show that a uniform bound of this pseudo-volume for all rank one quotients of $G$ leads to a uniform bound on a true volume of the subspaces $\P(F)\subset\P(G)$.  One problem appearing when trying to compute metrics on such objects is the fact that the sheaves involved and in particular $G$ may be singular. Moreover in general $G$ is not even a global quotient of a locally free sheaf. But it is so locally. Hence the idea to go to a local situation over some open subset $U$ of $X$ where a locally free sheaf $V$ exists together with a local epimorphism $V\to G_U$. Proposition \ref{prop: forme pozitive induse via surjectii} says that we can always lift a hermitian metric from $G_U$ to $V$. We will work with such a lifted metric on $V$ when we compute the  pseudo-volume of the restricted set $\P(F_U)\subset\P(G_U)\subset\P(V)$. If $V$ is of rank $r+1$ we associate to the quotient $V\to F_U$ the sheaf $F'=\Coker(\bigwedge^r\Ker(V\to F_U)\to\bigwedge^rV)$, show that the volume of $\P(F')$ is controlled by the pseudo-volume of $\P(F_U)$ and finally show that the volume of $\P(F')$  controls  a true volume of $\P(F_U)$.

\begin{Prop} \label{prop:pseudovolume}
 Let $X$ be an $n$-dimensional compact complex manifold and let $f:Y\to X$ be a smooth proper map of relative dimension $m>0$. 
 For $0\le p< m$ let $\eta_p$, ($\eta_{p+1}$), be a $(p,p)$-form, (respectively a $(p+1,p+1)$-form), on $Y$ which are strictly positive on the fibres of $Y\to X$ and let $\omega_{n-1}$, ($\omega_n$),    be  a strictly positive $(n-1,n-1)$-form,  (respectively a strictly positive $2n$-form), on $X$. Then there exist some positive constant $C$ such that the form
 $$\Omega:=Cf^*(\omega_n)\wedge\eta_{p}+f^*(\omega_{n-1})\wedge\eta_{p+1}$$
 is positive on $Y$ and strictly positive on any complex tangent $(n+p)$-plane whose projection on $X$ is at least $(n-1)$-dimensional.  
\end{Prop}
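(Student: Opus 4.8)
The plan is to reduce the statement to pointwise linear algebra and to stratify the space of tangent planes by the rank of their horizontal projection. Fix $y\in Y$ with image $x=f(y)$ and the exact sequence $0\to V\to T_yY\xrightarrow{df}T_xX\to 0$, where $V=\Ker df$ is the vertical (tangent-to-fibre) space of dimension $m$. For a complex $(n+p)$-plane $P\subseteq T_yY$ put $k:=\dim_\C df(P)$; since $\dim_\C T_xX=n$ we have $k\le n$, and the hypothesis ``projection at least $(n-1)$-dimensional'' is precisely $k\ge n-1$. I would write $F(P)$ and $S(P)$ for the coefficients, relative to the induced volume form of $P$, of the restrictions to $P$ of the two summands $f^*\omega_n\wedge\eta_p$ and $f^*\omega_{n-1}\wedge\eta_{p+1}$, so that $\Omega|_P=(CF(P)+S(P))\cdot\mathrm{vol}_P$; both $F$ and $S$ are continuous functions on the compact Grassmann bundle $\mathcal G\to Y$ of $(n+p)$-planes in $TY$.

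The first step is a Fubini-type restriction computation for each summand. Since $f^*\omega_n$ is the pullback of a top-degree form on $X$, it annihilates every plane with degenerate horizontal projection: hence $F(P)=0$ unless $k=n$, and for $k=n$ the restriction splits as a positive multiple of $\eta_p|_{P\cap V}$, which is strictly positive because $P\cap V$ is a complex $p$-plane inside a fibre. Thus $F\ge 0$, with $F>0$ exactly when $k=n$. The form $f^*\omega_{n-1}$ is the pullback of an $(n-1,n-1)$-form, so $S(P)=0$ as soon as $k\le n-2$; and when $k=n-1$ the horizontal capacity of $P$ is exactly saturated by $f^*\omega_{n-1}$, forcing $\eta_{p+1}$ to be evaluated on the vertical complement $P\cap V$ (a $(p+1)$-plane in a fibre), so $S(P)$ is again a positive multiple of $\eta_{p+1}|_{P\cap V}>0$. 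The essential asymmetry is that for $k=n$ the sign of $S(P)$ is not controlled: there the wedge with $f^*\omega_{n-1}$ leaves one horizontal slot free, into which the mixed horizontal--vertical components of $\eta_{p+1}$ enter, and since $\eta_{p+1}$ is only assumed positive on vertical planes, $S(P)$ may be negative. This is exactly why a constant $C$ is necessary.

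It then remains to choose $C$ uniformly, and here compactness of $Y$ (whence of $\mathcal G$) is decisive. It suffices to show $\sup\{-S(P)/F(P): F(P)>0,\ S(P)<0\}<\infty$, for then any larger $C$ gives $CF+S>0$ wherever $S<0$, while on $\{F=0\}=\{k\le n-1\}$ one has $CF+S=S$, which is $>0$ for $k=n-1$ and $=0$ for $k\le n-2$. To bound the supremum one measures closeness of a rank-$n$ plane to a lower stratum by the number $j$ of small singular values of $df|_P$, of size $\sim t$: a determinant count gives $F\sim t^{2j}$, whereas in $S$ the dominant contribution comes from the terms of $f^*\omega_{n-1}$ that \emph{omit} one of the small horizontal directions, which uses only $j-1$ of them and evaluates $\eta_{p+1}$ on a \emph{nearly vertical} plane, so that $S\sim t^{2(j-1)}\cdot(\text{positive})$. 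Hence $S>0$ near every lower-rank stratum ($j\ge 1$), so the bad set $\{S<0\}$ stays bounded away from $\{F=0\}$ and is therefore contained in a compact region where $F\ge\varepsilon_0>0$, making the ratio finite.

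The main obstacle is precisely this last uniformity step. Each summand guarantees strict positivity on only one stratum ($k=n$ for the first, $k=n-1$ for the second), and the first summand degenerates as $k$ drops from $n$ to $n-1$, so the naive estimate ``first term $\ge\delta>0$'' fails near that boundary. The heart of the matter is thus the quantitative claim that the possible negativity of the second summand is confined to planes whose horizontal projection is close to full rank --- equivalently that $S\ge 0$ in a neighbourhood of the low-rank strata --- which is what renders $-S/F$ bounded and allows the compactness argument to close.
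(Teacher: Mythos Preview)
Your approach is correct and genuinely different from the paper's. The paper works in local coordinates on a polydisc, parameterizes the open stratum $k=n$ by writing $P=\langle h_1+v'_1,\dots,h_n+v'_n,v_1,\dots,v_p\rangle$ with the $h_j$ fixed, and then bounds $S$ from below \emph{directly}: each term $\eta_{p+1}(v_1\wedge\cdots\wedge v_p\wedge(h_j+v'_j))$ expands as a constant plus a linear term plus a positive quadratic in $v'_j$, hence is bounded below uniformly, while in this same parameterization $F$ has a fixed positive lower bound; a large $C$ then finishes. Your argument instead works intrinsically on the compact Grassmann bundle and trades the explicit lower bound for a qualitative statement: the bad set $\{S<0\}$ is compactly contained in $\{F>0\}$, because near every lower-rank stratum the dominant term of $S$ is a vertical evaluation of $\eta_{p+1}$ and hence positive. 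The paper's route is shorter and avoids the stratification bookkeeping; yours is more conceptual and explains \emph{why} no delicate cancellation occurs at the boundary. One caution: your singular-value heuristic ``$F\sim t^{2j}$, $S\sim t^{2(j-1)}\cdot(\text{positive})$'' is correct but, to be made rigorous, needs exactly the kind of local expansion the paper performs (in particular one must check that the cross terms in the Hermitian expansion of $f^*\omega_{n-1}\wedge\eta_{p+1}$ are of the same order $t^{2(j-1)}$ and are dominated by the diagonal ones via positivity of $\eta_{p+1}$ on vertical multivectors). So the two proofs share the same computational core, organized around different endpoints: an explicit bound versus a compactness-and-degeneration argument.
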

\begin{proof}
Let $y\in Y$ be a point and $P\subset T_yY$ be a complex $(n+p)$-plane tangent at $y$. If $f_*P$ is at most $(n-2)$-dimensional, then it is clear that both forms $
f^*(\omega_n)\wedge\eta_{p}$ and $f^*(\omega_{n-1})\wedge\eta_{p+1}$ vanish on $P$. 
If $f_*P$ is  $(n-1)$-dimensional, then  $
f^*(\omega_n)\wedge\eta_{p}$ vanishes on $P$ and $f^*(\omega_{n-1})\wedge\eta_{p+1}$ is strictly positive on $P$. So we are left with the case when $f_*P$ is $n$-dimensional.
In this case $
f^*(\omega_n)\wedge\eta_{p}$ is positive on $P$ but $f^*(\omega_{n-1})\wedge\eta_{p+1}$ not necessarily so because of possible negativity of $\eta_{p+1}$ on horizontal directions.

Since $X$ is compact the problem is local on the base $X$, so we may suppose that $X$ is a polydisc in $\C^n$.
 Since $f$ is proper the problem is also local on the fibres, so locally around $y$ we may suppose that $f$ is the projection from a polydisc $\Delta^n\times \Delta^m$ to $\Delta^n$. 
 Let $w_1,...,w_n$ and $z_1,...,z_m$ and $\omega:=\sum_{j=1}^ni\d w_j\wedge\d\bar{w}_j$, $\eta:=\sum_{j=1}^mi\d z_j\wedge\d\bar{z}_j$ be the coordinate functions and the standard hermitian forms on $\Delta^n$ and $\Delta^m$ respectively.  We set $v_j:=\frac{\partial}{\partial z_j}$, for $j=1,...,m$ and $h_k:= \frac{\partial}{\partial w_k}$, for $k=1,...,n$. 
 We may choose the coordinates $w_1,...,w_n$ so that at $f(y)$ we have $\omega_{n-1}=\omega^{n-1}$. Moreover we can compare the forms $\omega_n$ and $\omega^n$  and for our purposes we may suppose that  $\omega_n=c_1\omega^n$ for a positive constant $c_1$. 
We will further suppose for simplicity that our tangent $(n+p)$-plane $P$ is generated by the vectors $h_1+v'_1,... , h_n+v'_n,v_1,...,v_p$, where $v'_1,...,v'_n$ are vertical tangent vectors for $f$. For the next computation we shall identify $\bigwedge^{q,q}_\R(T^*_yY)$ to  the space of hermitian forms on $\bigwedge^q(T_yY)$, cf. \cite[Proposition 10.5.4]{BarletMagnusson}. We will also write $\hat{h}_j$ for $h_1\wedge...h_{j-1}\wedge h_{j+1}\wedge...\wedge h_n$. With these conventions we get 
$$\Omega((h_1+v'_1)\wedge...\wedge (h_n+v'_n)\wedge v_1\wedge...\wedge v_p,(h_1+v'_1)\wedge...\wedge (h_n+v'_n)\wedge v_1\wedge...\wedge v_p)=$$ $$C\omega_n(h_1\wedge...\wedge h_n,h_1\wedge...\wedge h_n)\eta_p(v_1\wedge...\wedge v_p,v_1\wedge...\wedge v_p)+$$ 
$$\sum_{j=1}^{n}\omega^{n-1}(\hat{h}_j,\hat{h}_j)\eta_{p+1}(v_1\wedge...\wedge v_p\wedge(h_j+v'_j), v_1\wedge...\wedge v_p\wedge(h_j+v'_j))=$$ $$Cc_1 \eta_p(v_1\wedge...\wedge v_p,v_1\wedge...\wedge v_p) + \sum_{j=1}^{n}\eta_{p+1}(v_1\wedge...\wedge v_p\wedge(h_j+v'_j), v_1\wedge...\wedge v_p\wedge(h_j+v'_j)).$$ 
We clearly have a positive lower bound on $\eta_p(v_1\wedge...\wedge v_p,v_1\wedge...\wedge v_p) $ and it remains to estimate 
$\eta_{p+1}(v_1\wedge...\wedge v_p\wedge(h_j+v'_j), v_1\wedge...\wedge v_p\wedge(h_j+v'_j)).$
But in the decomposition 
$\eta_{p+1}(v_1\wedge...\wedge v_p\wedge(h_j+v'_j), v_1\wedge...\wedge v_p\wedge(h_j+v'_j))=\eta_{p+1}(v_1\wedge...\wedge v_p\wedge h_j, v_1\wedge...\wedge v_p\wedge h_j)+2\Re e(\eta_{p+1}(v_1\wedge...\wedge v_p\wedge h_j, v_1\wedge...\wedge v_p\wedge v'_j))+ \eta_{p+1}(v_1\wedge...\wedge v_p\wedge v'_j, v_1\wedge...\wedge v_p\wedge v'_j)$, the first term is clearly bounded from below and also the sum of the following two terms, since the third term is positive and quadratic and the second one is linear in $v'_j$. So a uniform constant $C$ satisfying our requirements may be found.
\end{proof}

\begin{Prop} \label{prop: forme pozitive pe spatii relative}
 Let $X$ be an irreducible compact complex space of dimension $n$, let $G$ be a coherent sheaf of rank $m+1>1$ on $X$ and let $\eta$ be a positive $(1,1)$-form on the fibres of $\P(G)\to X$. Let further $\omega_{n-1}$, ($\omega_n$),    be  a strictly positive $(n-1,n-1)$-form,  (respectively a strictly positive $2n$-form), on $X$ and denote by $Z$ the irreducible component of $\P(G)$ covering $X$ and by $f:Z\to X$ the projection map.
 Then there exist some positive constant $C$ such that the form
 $$\Omega:=Cf^*(\omega_n)\wedge\eta^{p}+f^*(\omega_{n-1})\wedge\eta^{p+1}$$
 is positive on $Z$ and strictly positive on any complex tangent $(n+p)$-plane whose projection on $X$ is at least $(n-1)$-dimensional.  
\end{Prop}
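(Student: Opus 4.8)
The plan is to deduce the statement from the smooth-base computation of Proposition~\ref{prop:pseudovolume}, the only genuinely new ingredients being the presence of points where $X$ is singular or $G$ is not locally free, and the need for the constant $C$ to remain uniform there. Since positivity of a form is a pointwise closed condition on tangent planes and $X$ is compact, I would first reduce to a local statement: cover $X$ by finitely many relatively compact open sets $U$ carrying a locally free sheaf $V$ of rank $r+1\ge m+1$ with an epimorphism $V\to G_U$. This induces a closed embedding $\P(G_U)\hookrightarrow\P(V)=U\times\P^r$, hence an embedding $Z|_U\hookrightarrow U\times\P^r$ of the relevant component. Lifting the given metric from $G_U$ to $V$ as in Proposition~\ref{prop: forme pozitive induse via surjectii}, the fibrewise form $\eta$ becomes the restriction to $\P(G_U)$ of a form $\tilde\eta$ that is strictly positive on the \emph{compact} fibres $\P^r$ of $U\times\P^r\to U$. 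The desired global $C$ will be the maximum of the finitely many constants obtained on these charts.

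Over the dense open set $X_0\subset X$ where $X$ is smooth and $G$ is locally free, $Z_0:=f^{-1}(X_0)$ coincides with the smooth $\P^m$-bundle $\P(G_{X_0})$ and $f\colon Z_0\to X_0$ is smooth proper of relative dimension $m$. Here the pointwise computation is verbatim that of Proposition~\ref{prop:pseudovolume}, with $\eta_p,\eta_{p+1}$ replaced by $\eta^p,\eta^{p+1}$: at a point $y_0\in Z_0$ I would pick coordinates on $X$ near $x_0=f(y_0)$ so that $\omega_{n-1}=\omega^{n-1}$ and $\omega_n=c_1\omega^n$ at $x_0$, write a complex tangent $(n+p)$-plane $P$ as spanned by lifts $h_j+v_j'$ of a basis of $f_*P$ together with $p$ vertical vectors, and split into the three cases $\dim f_*P\le n-2$, $\dim f_*P=n-1$, $\dim f_*P=n$. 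In the first both summands vanish; in the second only $f^*(\omega_{n-1})\wedge\eta^{p+1}$ survives and is strictly positive; in the third $Cf^*(\omega_n)\wedge\eta^p$ is arranged to dominate the possibly negative mixed terms of $f^*(\omega_{n-1})\wedge\eta^{p+1}$ by the same estimate, where a positive quadratic term absorbs a linear one, provided $C$ is large enough.

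The step I expect to be the crux is that $X_0$ is not compact, so Proposition~\ref{prop:pseudovolume} does not by itself furnish a constant valid up to the bad locus; one must verify that none of the lower bounds entering the estimate degenerate as $x_0$ tends to $X\setminus X_0$. This is precisely what the local model $Z|_U\subset U\times\P^r$ secures: the vertical tangent directions of $Z$ at every point of $U$, whether good or bad, lie in the tangent spaces of the compact fibres $\P^r$, on which $\tilde\eta$ is a fixed strictly positive form. Consequently the lower bound for $\eta^p$ on vertical $p$-planes and the control of the mixed terms hold uniformly over $\bar U$, in particular over its singular and non-locally-free points, and with finitely many charts the resulting $C$ is uniform on all of $Z$.

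Finally, the two summands of $\Omega$ being continuous forms on $Z$ and the inequality $\Omega\ge 0$ holding on the dense set $Z_0$ with a uniform constant, positivity of $\Omega$ extends to all of $Z$ by continuity, while the asserted strict positivity on tangent $(n+p)$-planes with at least $(n-1)$-dimensional projection again comes from the surviving term $f^*(\omega_{n-1})\wedge\eta^{p+1}$. The one technical point to settle carefully is the meaning of tangent planes and of the projection $f_*$ at singular points of $X$ and $Z$; I would handle this through the chosen local embeddings, evaluating $\omega_n$ and $\omega_{n-1}$ on tangent planes to $X$ exactly as in the definition of forms on complex spaces used in Definition~\ref{def:multipolarization}.
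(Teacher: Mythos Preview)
Your proposal is correct and follows essentially the same route as the paper: reduce the singular statement to the smooth-fibre estimate of Proposition~\ref{prop:pseudovolume} by locally embedding $\P(G_U)$ into an ambient trivial projective bundle $\P(V)\cong U\times\P^r$, then use compactness of $X$ and of the fibres $\P^r$ to make the constants uniform. The paper's proof differs only in how the extension of $\eta$ to the ambient model is arranged: rather than invoking Proposition~\ref{prop: forme pozitive induse via surjectii} to lift a metric on $G$ to $V$, it assumes without loss of generality that $\eta$ is obtained via a partition of unity from restrictions of vertical Fubini--Study forms $\eta_i$ on local bundles $\P(H_i)$, and then uses the comparison $\eta|_P\le C_i\,\eta_i|_P$ on vertical $p$-planes over compact subsets $K_i$ of the chart domains. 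Your metric-lifting route tacitly assumes that $\eta$ arises from a hermitian metric on $G$, which is indeed the case in the paper's applications but is not literally part of the hypothesis; the paper's partition-of-unity reduction handles an arbitrary fibrewise-positive $\eta$ directly. Apart from this cosmetic difference the two arguments coincide.
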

\begin{proof}
The point is to show that   we can adapt the argument of   Proposition \ref{prop:pseudovolume} to the given situation and produce the desired constant $C$ even though the morphism $f:Z\to X$ is no longer supposed to be smooth.

There will be no restriction of generality if we suppose that the form $\eta$ on $\P(G)$ appears via a partition of unity $(\pi_i)_i$ subordinated to an open cover $(U_i)_i$  of $X$ using local embeddings $\P(G)_{U_i}\subset \P(H_i)$ and vertical positive forms $\eta_i$ on $\P(H_i)$, where $H_i$ are free $\cO_{U_i}$-modules of finite rank covering $G_{U_i}$ and $\eta_i$ are vertical Fubini-Study $(1,1)$-forms on $\P(H_i)$. 
The fact that the forms $\eta_i$ are defined on  $\P(H_i)$ allows us to use the inequalities obtained in the proof of  
Proposition \ref{prop:pseudovolume} over compact subsets $K_i$ contained in the interior sets of $\Supp(\phi_i)$. Indeed, the morphisms $\P(H_i)\to U_i$ are smooth and there exist positive constants $C_i$ such that on vertical $p$-planes $P$ tangent to $\P(G)_{K_i}$ comparison formulas
$\eta_{|P}\le C_i{\eta_i}_{|P}$ hold.
\end{proof}

\begin{Prop} \label{prop: forme pozitive induse via surjectii}
 Let $X$ be a complex space and let $U$ be a relatively compact open subset of $X$. Let further $H\to G$ be an epimorphism of coherent sheaves on $X$. Then any hermitian metric $\bar g$ on $G$ admits a hermitian lift to $H$ which is strictly positive over $U$. 
\end{Prop}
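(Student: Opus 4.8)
The plan is to reduce the existence of a lift to a fibrewise statement in linear algebra, to notice that the lifting condition becomes \emph{convex} once it is phrased in terms of the dual metric, and then to glue local lifts by a partition of unity over the relatively compact set $U$. Write $\pi\colon H\to G$ for the given epimorphism and $K=\Ker\pi$, so that at each point $x$ we have a surjection $\pi(x)\colon H(x)\to G(x)$ of finite-dimensional hermitian spaces. A hermitian metric $g$ on $H$ induces a quotient metric $q(g)$ on $G$, and $g$ is a lift of $\bar g$ precisely when $q(g)=\bar g$. The organizing observation is the classical duality between quotient norms and restrictions of dual norms: if $g^\vee$ denotes the induced metric on $H^\vee$, then $q(g)^\vee$ is the restriction of $g^\vee$ to the fixed subspace $\pi(x)^\vee(G(x)^\vee)$, which is the annihilator of $\Ker\pi(x)$ inside $H(x)^\vee$. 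Hence $g$ lifts $\bar g$ if and only if
$$
g^\vee|_{(\Ker\pi)^{\circ}}=\bar g^\vee .
$$
Since this is a \emph{linear} condition on $g^\vee$ and positive-definiteness is convex, the set of dual metrics on $H$ lifting $\bar g$ is convex, and it is stable under convex combinations whose coefficients sum to $1$.

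Next I would establish local existence of lifts. As $U$ is relatively compact, one can cover $\bar U$ by finitely many open sets $W_1,\dots,W_k$ on each of which $H$ admits an epimorphism $P_j=\cO_{W_j}^{N_j}\to H|_{W_j}$ from a free sheaf; composing with $\pi$ gives $P_j\to G|_{W_j}$. Because quotient norms compose (minimizing over preimages in $P_j$ may be carried out by first descending to $H$ and then to $G$), it suffices to equip the free sheaf $P_j$ with a smooth hermitian metric whose quotient metric on $G$ equals $\bar g$, and then to take its quotient metric on $H|_{W_j}$ as the local lift $g_j$. On a free sheaf this amounts, in the dual picture, to prescribing a smooth positive-definite extension to $\C^{N_j}$ of the form $\bar g^\vee$ imposed on the image of $G^\vee$; such an extension exists because $\bar g$ is by definition locally a quotient metric from a free sheaf, which is exactly what supplies the required smoothness across the locus where $G$ fails to be locally free.

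I would then glue. Choosing a smooth partition of unity $(\rho_j)$ subordinate to $(W_j)$ with $\sum_j\rho_j\equiv 1$ near $\bar U$, set $\gamma:=\sum_j\rho_j\,g_j^\vee$. By the convexity observation, at each point of $U$ the form $\gamma$ is positive-definite and satisfies $\gamma|_{(\Ker\pi)^{\circ}}=\sum_j\rho_j\,\bar g^\vee=\bar g^\vee$; its dual metric $g:=\gamma^\vee$ is therefore a hermitian metric on $H$ that is strictly positive over $U$ and lifts $\bar g$. Extending the local data by an arbitrary metric off a neighbourhood of $\bar U$ and passing to a locally finite refinement promotes $g$ to a metric on all of $H$ while retaining strict positivity over $U$, as required.

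The main obstacle is that $H$ and $G$ need not be locally free, so the dimensions of $H(x)$, $G(x)$ and of $\Ker\pi(x)$ jump along the non-locally-free locus, and there neither the kernel nor any complement is a subbundle; consequently a naive pointwise choice of orthogonal complement is not smooth. This is precisely what the two devices above are designed to circumvent: performing the construction upstairs on the free sheaves $P_j$ via the composition of quotient metrics, where no jumping occurs, and the convex reformulation in terms of the dual metric, which survives partition-of-unity averaging and hence requires no global splitting of $\pi$.
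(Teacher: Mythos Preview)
Your approach is considerably more explicit than the paper's, which simply invokes Bingener's framework: a hermitian lift exists already from the definition of metrics as $C^\infty$ sections of the sheaves $H^{1,1}$ and $G^{1,1}$ \cite[(4.2)]{Bin83}, and it can then be modified to strict positivity over $U$ by \cite[Lemma~4.4]{Bin83}. No construction is carried out in the paper itself.

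Your central observation --- that the lifting condition $q(g)=\bar g$ is \emph{not} affine in $g$ but becomes affine in the inverse form $g^\vee$, so that partition-of-unity gluing must be performed on the dual side --- is correct and is a natural way to make the argument self-contained. For locally free $H$ your proof is complete.

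There is, however, a gap in the non-locally-free case. You correctly flag that the fibre dimensions of $H(x)$ and $\Ker\pi(x)$ jump, and you introduce local free covers $P_j\to H|_{W_j}$ to handle this; but you use the $P_j$ only for the \emph{local existence} of the lifts $g_j$, while the gluing is still performed with the objects $g_j^\vee$ on $H$ itself. For a non-locally-free $H$ there is no sheaf ``$H^\vee$'' on which these dual forms live, so it is unclear in what fine sheaf the convex combination $\gamma=\sum_j\rho_j\, g_j^\vee$ is taken, and why its ``inverse'' $\gamma^\vee$ is again a hermitian metric on $H$ in the required sense. A smaller gap appears already in local existence: that $\bar g$ comes by definition from \emph{some} free cover $Q\to G$ does not directly produce a lift to your chosen free cover $P_j\to G$; transferring between distinct free covers across the non-locally-free locus of $G$ needs an argument. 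Both issues are fixable --- for instance by interpreting $g_j^\vee$ as the image of $h_j^\vee$ in the genuinely fine sheaf $(H\otimes_\cO\cA)\otimes_\cA\overline{(H\otimes_\cO\cA)}$, noting that the lifting condition is linear there, and then justifying that a fibrewise positive-definite section of this sheaf can be locally lifted to a positive-definite matrix on a free cover and hence inverted back to a metric on $H$ --- but as written the passage from vector bundles to general coherent sheaves is not complete.
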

\begin{proof}
One can see that a hermitian lift $g$ of $\bar g$ exists using the definition of the metrics which are $C^\infty$ sections in the corresponding sheaves $G^{1,1}$ and $H^{1,1}$, see \cite[(4.2)]{Bin83}. The fact that this lift may be changed into one which is strictly positive over $U$ follows from \cite[Lemma 4.4]{Bin83}.  
\end{proof}

For the convenience of the reader we include next the computation of some easy integrals on $\C^n$ to be used later.

\begin{Lem}\label{lem: classical integrals} 
Let $(z_1,...z_n)$ be the standard coordinate functions on $\C^n$, put $z_0=1$, $z:=(1,z_1,...,z_n)$ and $|z|^2:=\sum_{j=0}^n|z_j|^2 =1+\sum_{j=1}^n|z_j|^2$. Let further $\omega$ be the restriction to $\C^n$ of the (standard) Fubini-Study metric on $\P^n$, where $\C^n$ is seen as a chart domain of $\P^n$, i.e. $\omega=\frac{i}{2\pi}\partial\bar\partial\log|z|^2$, and
$$I_{j\bar k}:=\int_{(\C^*)^n}\frac{z_j\bar z_k}{|z|^2}\omega^n, \ \text{for} \ j,k\in\{0,...,n\}.$$
Then $I_{j\bar k}$ equals $0$ for $j\neq k$ and $\frac{1}{n+1}$ for $j=k$.
\end{Lem}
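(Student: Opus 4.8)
The plan is to compute $I_{j\bar k}$ by exploiting the symmetries of the integrand and then reducing everything to a single normalization constant. First I would observe that the Fubini-Study form $\omega = \frac{i}{2\pi}\partial\bar\partial\log|z|^2$ is invariant under the torus action $z_\ell \mapsto e^{i\theta_\ell} z_\ell$ for $\ell = 1,\dots,n$, and hence so is $\omega^n$. Under this action the integrand $\frac{z_j\bar z_k}{|z|^2}$ picks up the factor $e^{i(\theta_j - \theta_k)}$ (with the convention $\theta_0 = 0$). For $j \neq k$, averaging over the torus kills the integral, so $I_{j\bar k} = 0$; this disposes of the off-diagonal terms immediately.

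For the diagonal terms I would argue that $I_{j\bar j}$ is independent of $j$. The symmetry here is the permutation symmetry of the homogeneous coordinates $(z_0, z_1, \dots, z_n)$ on $\P^n$: the Fubini-Study form on $\P^n$ is invariant under the full symmetric group permuting the $n+1$ homogeneous coordinates, and the set $(\C^*)^n$ (the locus where all affine coordinates $z_1,\dots,z_n$ are nonzero, which after projectivizing means all $n+1$ homogeneous coordinates are nonzero) is preserved by these permutations. The integrand $\frac{|z_j|^2}{|z|^2}$ is exactly the $j$-th term of a sum whose total is $1$, so permuting coordinates shows all $n+1$ diagonal integrals are equal. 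Writing their common value as $c$, I would then sum:
$$
\sum_{j=0}^n I_{j\bar j} = \int_{(\C^*)^n}\frac{\sum_{j=0}^n|z_j|^2}{|z|^2}\,\omega^n = \int_{(\C^*)^n}\omega^n.
$$

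The last ingredient is the total integral $\int_{(\C^*)^n}\omega^n = \int_{\P^n}\omega^n = 1$, where the first equality holds because $\P^n \setminus (\C^*)^n$ is a finite union of proper linear subspaces, hence a set of measure zero, and the second is the standard normalization of the Fubini-Study volume (with the chosen factor $\frac{1}{2\pi}$, the class $[\omega]$ is the positive generator of $H^2(\P^n;\Z)$, so $\int_{\P^n}\omega^n = 1$). Combining, I would get $(n+1)c = 1$, whence $c = I_{j\bar j} = \frac{1}{n+1}$ for every $j$, completing the computation.

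I expect the only genuine obstacle to be the careful justification of the normalization $\int_{\P^n}\omega^n = 1$ together with the measure-zero discarding of the coordinate hyperplanes; both are standard but should be stated cleanly. The symmetry arguments, while conceptually the heart of the proof, are routine once the torus and permutation invariances of $\omega$ are noted, so I would keep those brief rather than grinding through an explicit integral in polar coordinates.
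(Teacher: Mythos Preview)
Your proof is correct and somewhat more streamlined than the paper's. For the diagonal terms both arguments are essentially the same: the paper writes out the permutation symmetry explicitly as the chart transition $z_1=1/w_1$, $z_j=w_j/w_1$ (which is exactly the swap of $Z_0$ and $Z_1$ in homogeneous coordinates) and checks $I_{0\bar 0}=I_{1\bar 1}$, whereas you invoke the $S_{n+1}$-invariance of the Fubini--Study form on $\P^n$ directly. The genuine difference is in the off-diagonal case. The paper first uses the same chart transitions to show all $I_{j\bar k}$ with $j\neq k$ coincide, and then applies the orientation-reversing substitution $z_1\mapsto\bar w_1$ to obtain $I_{0\bar 1}=-I_{1\bar 0}=-I_{0\bar 1}$, forcing the common value to vanish. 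Your torus-averaging argument reaches $I_{j\bar k}=0$ in one step and avoids the conjugation trick entirely; it is shorter and arguably more transparent. What the paper's approach buys is that it stays within holomorphic (and one anti-holomorphic) coordinate changes without appealing to group actions, which some readers may find more self-contained; what your approach buys is brevity and a clearer conceptual reason for the vanishing.
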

\begin{proof}
We start with the case $j=k$. It is readily shown that $\int_{(\C^*)^n}\omega^n=1$, cf. \cite{DemaillyBook}. 
Thus $\sum_{j=0}^n I_{j\bar j}=1$. On $(\C^*)^n$ we use the coordinate change $f$, defined by $z_1=\frac{1}{w_1}$, $z_j=\frac{w_j}{w_1}$ for $j\notin\{0,1\}$, to get $|z|^2=\frac{|w|^2}{|w_1|^2}$, $f^*\omega=f^*(\frac{i}{2\pi}\partial\bar\partial\log(1+\sum_{j=1}^n|z_j|^2))=\frac{i}{2\pi}\partial\bar\partial\log(\frac{1}{|w_1|^2}{(1+\sum_{j=1}^n|w_j|^2}))=\frac{i}{2\pi}\partial\bar\partial\log|w|^2-\frac{i}{2\pi}\partial\bar\partial\log|w_1|^2=\frac{i}{2\pi}\partial\bar\partial\log|w|^2=:\omega_w$ and 
$$I_{0\bar 0}=\int_{(\C^*)^n}\frac{1}{|z|^2}\omega^n= \int_{(\C^*)^n}\frac{|w_1|^2}{|w|^2}\omega_w^n=I_{1\bar 1}, $$
hence $I_{j\bar j}=\frac{1}{n+1}$ for all $j\in\{0,...,n\}.$

We now look at the case $j\neq k$. The same substitution as in the case $j=k$ shows that $I_{0\bar 1}=I_{1\bar 0}$ and a similar computation also gives $I_{1\bar k}=I_{0\bar k}$ for all $k\notin\{0,1\}$. By symmetry all $I_{j\bar k}$ are thus equal when $j\neq k$.
Finally the orientation reversing coordinate change $z_1=\bar w_1$, $z_j=w_j$ for $j\notin\{0,1\}$ leads to $ I_{0\bar 1}=-I_{1\bar 0}=-I_{0\bar 1}$, whence our assertion.
\end{proof}

For any hermitian holomorphic line bundle $(L,h)$ on a complex manifold we will denote by $R^L$ its curvature form with respect to the Chern connection of $(L,h)$ and by $c_1(L,h):=\frac{i}{2\pi}R^L$ the associated Chern form. We have $R^L=-\partial\bar\partial \log|s|^2$, where $s$ is any (non-vanishing) holomorphic local section of $L$. If $L$ is the associated determinant line bundle of a holomorphic hermitian bundle $(E,h)$ and if $(h_{j\bar k})=(h(s_j,s_k))$ is the matrix of $h$ with respect to a local holomorphic frame $s=(s_1,..., s_r)$  of $E$, then $R^L= -\partial\bar\partial \log(\det(h_{j\bar k}))$.

The next Lemma serves in establishing a local comparison formula between (pseudo-) volumes of projective subbundles in the following situation. To a pure rank one quotient $F$ of a locally free sheaf $V$ of rank $r+1$ over a reduced complex space $X$ we associate a pure quotient $F'$ of rank $r$ of $\bigwedge^rV$ in a canonical way via the cokernel of the morphism $\bigwedge^r\Ker(V\to F)\to\bigwedge^rV$.  (Recall that a coherent sheaf $F$ is called {\em pure of dimension} $d$ if $\cF$ as well as all its non-trivial coherent subsheaves are of dimension $d$.) When $X$ and $V$ are endowed with hermitian metrics we get corresponding (pseudo-) volume forms on $\P(F)$ and $\P(F')$. We shall see that the corresponding (pseudo-) volume of $\P(F)$ computed over any open relatively compact subset $U$ of $X$ controls the volume of $\P(F')$ over the same subset $U$. Since we are interested in volumes of irreducible complex spaces it is enough to integrate volume forms over dense Zariski open subsets. Therefore we may and will assume that $X$ is smooth and that $F$ is locally free over $X$. 

Let $\omega_{n-1}$, $\omega_n$ be positive forms on $X$ of bidegrees $(n-1,n-1)$ and $(n,n)$ respectively. For simplicity of notation we will denote also by 
 $\omega_{n-1}$ and $\omega_n$ the pullbacks of these forms to $\P(F)$ and $\P(F')$. 
We will denote by $\eta_V$ the relative Fubini-Study $(1,1)$-form on $\P(V)$ with respect to a hermitian metric $h$ on $V$ and we will use the same symbol for its restriction to $\P(F)$, and likewise for the induced relative Fubini-Study $(1,1)$-form $\eta_{\bigwedge}$ on $\P(\bigwedge^rV)$ and for its restriction to $\P(F')$. 
 The (pseudo-)volume forms we consider on $\P(F)$ and on $\P(F')$ have each two components, namely $\omega_n$ and $\eta_V\wedge\omega_{n-1}$ on $\P(F)$, and $\eta^{r-1}_{\bigwedge}\wedge\omega_n$ and $\eta_{\bigwedge}^r\wedge\omega_{n-1}$ on $\P(F')$.  

The comparison of $\omega_n$  on $\P(F)$ to $\eta^{r-1}_{\bigwedge}\wedge\omega_n$ on $\P(F')$ is immediately done since their fibre integrals with respect to the projections $\P(F)\to X$ and $\P(F')\to X$ are both equal to $\omega_n$ by the projection formula for fibre integrals, \cite{Stoll-fiber-integration}. In the sequel we denote fibre integrals by $\int_{Y\to X}$. By the projection formula again we get 
$$\int_{\P(F)\to X}(\eta_V\wedge\omega_{n-1})=(\int_{\P(F)\to X}\eta_V)\wedge\omega_{n-1}$$ 
and 
$$\int_{\P(F')\to X}(\eta_{\bigwedge}^r\wedge\omega_{n-1})=(\int_{\P(F')\to X}\eta_{\bigwedge}^r)\wedge\omega_{n-1}$$
so we are left with the task of comparing $\int_{\P(F)\to X}\eta_V$ to $\int_{\P(F')\to X}\eta_{\bigwedge}^r$. The result is 

\begin{Lem}\label{lem:fibre-integrals} 
In the above set-up we have
$$\int_{\P(F)\to X}\eta_V=\int_{\P(F')\to X}\eta_{\bigwedge}^r-(r-1) \frac{i}{2\pi} R^{\det V}.$$
\end{Lem}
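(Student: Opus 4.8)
The plan is to reduce both sides of the identity to first Chern forms of line bundles equipped with explicit induced metrics, and then to verify the resulting \emph{pointwise} identity. As permitted by the preamble I work on the dense smooth locus of $X$, where $V$, $F$ and $K:=\Ker(V\to F)$ are holomorphic vector bundles, $K$ of rank $r$, carrying respectively the metric $h$, the quotient metric $h_F$ and the subspace metric $h_K$. The cohomological version of the asserted equality is immediate from additivity of $c_1$, so the whole content is to make it hold at the level of differential forms for these particular induced metrics.

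First I would identify the left-hand side. Since $F$ has rank one, $\P(F)\to X$ is an isomorphism and the quotient $V\to F$ is a holomorphic section $\sigma\colon X\to\P(V)$ with $\sigma^*\cO_{\P(V)}(1)=F$ carrying the quotient metric $h_F$. As $\eta_V$ is the form $\frac{i}{2\pi}R$ of $\cO_{\P(V)}(1)$ for the metric induced by $h$, pulling back along $\sigma$ gives, at form level,
$$\int_{\P(F)\to X}\eta_V=\sigma^*\eta_V=c_1(F,h_F).$$
Next, the quotient $\bigwedge^rV\to F'$ embeds $\P(F')\hookrightarrow\P(\bigwedge^rV)$ and identifies $\eta_{\bigwedge}|_{\P(F')}$ with the relative Fubini--Study form $\eta_{F'}$ of $(F',h_{F'})$, where $h_{F'}$ is the quotient metric from $\bigwedge^rh$. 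As $F'$ has rank $r$ the fibre of $\P(F')\to X$ is $\P^{r-1}$, so $\int_{\P(F')\to X}\eta_{F'}^{\,r}$ is the first Segre form. I would compute it directly in a fibre trivialization: writing $\eta_{F'}=\frac{i}{2\pi}\partial\bar\partial\log|z|^2$ and extracting the component of $\eta_{F'}^{\,r}$ with exactly one horizontal $(1,1)$-factor, the fibre integration is carried out by Lemma \ref{lem: classical integrals} (with $n=r-1$, the integrals $I_{j\bar k}=\delta_{jk}/r$ are precisely the ones that occur), giving the form-level equality
$$\int_{\P(F')\to X}\eta_{\bigwedge}^{\,r}=c_1(F',h_{F'}).$$

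The crux is then to relate $h_{F'}$ to the metrics already present. From $0\to K\to V\to F\to 0$ one obtains the canonical isomorphism $F'=\bigwedge^rV/\bigwedge^rK\cong\bigwedge^{r-1}K\otimes F$, and I claim it is an isometry for $h_{F'}$ on the left and $(\bigwedge^{r-1}h_K)\otimes h_F$ on the right. This is checked pointwise: at $x$ pick a unitary frame $s_0,\dots,s_r$ of $V_x$ with $s_1,\dots,s_r$ an orthonormal basis of $K_x$; then the $r$ vectors $e_I=s_0\wedge s_1\wedge\cdots\widehat{s_i}\cdots\wedge s_r$ spanning $(\bigwedge^rK_x)^\perp$ are orthonormal and map, up to sign, to the orthonormal vectors $(s_1\wedge\cdots\widehat{s_i}\cdots\wedge s_r)\otimes\bar s_0$, so both metrics agree at $x$, hence everywhere. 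The same adapted-frame argument yields the isometries $\det V\cong\det K\otimes F$ and $\det(\bigwedge^{r-1}K)\cong(\det K)^{\otimes(r-1)}$, that is the form identities
$$\tfrac{i}{2\pi}R^{\det V}=c_1(K,h_K)+c_1(F,h_F),\qquad c_1\bigl(\textstyle\bigwedge^{r-1}K,\bigwedge^{r-1}h_K\bigr)=(r-1)\,c_1(K,h_K).$$
Since $c_1(F',h_{F'})=c_1(\bigwedge^{r-1}K,\bigwedge^{r-1}h_K)+r\,c_1(F,h_F)=(r-1)c_1(K,h_K)+r\,c_1(F,h_F)$, the right-hand side becomes
$$\int_{\P(F')\to X}\eta_{\bigwedge}^{\,r}-(r-1)\tfrac{i}{2\pi}R^{\det V}=(r-1)c_1(K,h_K)+r\,c_1(F,h_F)-(r-1)\bigl(c_1(K,h_K)+c_1(F,h_F)\bigr)=c_1(F,h_F),$$
which matches the left-hand side.

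The hard part will not be the algebra but the two form-level refinements of otherwise standard cohomological facts. Namely (a) the passage from the first Segre \emph{class} to the form equality $\int_{\P(F')\to X}\eta_{\bigwedge}^{\,r}=c_1(F',h_{F'})$, which is exactly where the explicit fibre integration furnished by Lemma \ref{lem: classical integrals} is indispensable, and (b) the verification that the quotient metric $h_{F'}$ genuinely \emph{equals} the tensor metric $(\bigwedge^{r-1}h_K)\otimes h_F$, and not merely a cohomologous one; both are settled by the adapted-unitary-frame computations above, after which the identity holds on the nose as an equality of $(1,1)$-forms.
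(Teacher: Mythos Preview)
Your argument is correct and takes a genuinely different route from the paper's proof.

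The paper carries out an explicit local computation. It reduces to $\dim X=1$, fixes a normal holomorphic frame of $(V,h)$ at a point $x$, and then works through the identification $\P(\bigwedge^{r}V)\cong\P(V^*)\cong\P_{sub}(V)$ coming from the isometry $\bigwedge^{r}V\cong V^*\otimes\det V$. Under this identification $\P(F')$ becomes $\P_{sub}(K)$; the paper then expands $\eta_{\bigwedge}^r=(\eta_{V^*}+\tfrac{i}{2\pi}R^{\det V})^r$ on $\P_{sub}(K)$ into three terms, integrates each over the fibre $\P^{r-1}$ using Lemma~\ref{lem: classical integrals}, and checks that the sum equals the directly computed $\sigma^*\eta_V$ plus $(r-1)\tfrac{i}{2\pi}R^{\det V}$.

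Your approach is more structural: you recognise both fibre integrals as first Chern forms, namely $\int_{\P(F)\to X}\eta_V=c_1(F,h_F)$ trivially and $\int_{\P(F')\to X}\eta_{F'}^{\,r}=c_1(F',h_{F'})$ by the form-level Segre identity for a rank $r$ hermitian bundle, and then reduce the statement to the pointwise isometry $F'\cong\bigwedge^{r-1}K\otimes F$ together with the standard isometries $\det V\cong\det K\otimes F$ and $\det(\bigwedge^{r-1}K)\cong(\det K)^{r-1}$. Your adapted-unitary-frame verification of these isometries is correct, and your algebra with Chern forms is clean.

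What each approach buys: the paper's computation is completely self-contained and never invokes the form-level Segre identity as a black box; it also yields explicit local expressions (which are reused in Lemma~\ref{lem:higher-fibre-integrals}). Your approach is shorter and more transparent conceptually, at the cost of relying on the identity $\pi_*\eta_E^{\,r}=c_1(E,h_E)$, which---as you correctly note---still needs a normal-frame computation together with Lemma~\ref{lem: classical integrals} to be established at the level of forms. So the two proofs share the same analytic core (Lemma~\ref{lem: classical integrals}) but organise the surrounding bookkeeping differently: the paper via the ambient $\P_{sub}(V)$, you via the intrinsic hermitian structure of $F'$.
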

\begin{proof}
Any $(1,1)$-form on $X$ is determined by its restriction to the smooth curves on $X$. Moreover fibre integration is compatible with restriction, cf. \cite{Stoll-fiber-integration}, so we may assume that $X$ is smooth one-dimensional. The property we want to prove is local on the base so we will consider a point $x$ of $X$ around which $V$ is trivial and endowed with a hermitian metric $h^V$. Let $s=(s_0,s_1,...,s_r)$ be a normal holomorphic local frame of $(V,h^V)$ at $x$, i.e. such that with respect to it we have $h^V_{i\bar j}(x)=\delta_{ij}$ and $(\d h^V_{i\bar j})(x)=0$ for all $i,j\in\{0,...,r\}$, cf. \cite[Proposition 1.4.20]{KobayashiVectorBundles}.

We start by computing  the ${(1,1)}$-form $\eta_V$ on $\P(F)$. We may consider $\P(F)$ as a subspace of $\P(V)$ but also as a subspace of $\P_{sub}(V^*)$ using the canonical identification $\P(V)\cong \P_{sub}(V^*)$. For our metric computation we will prefer the latter point of view. Here $\P_{sub}(V^*)$ is the projective bundle parameterizing $1$-dimensional subspaces in the fibers of $V^*$. The form $\eta_V$ on $\P_{sub}(V^*)$ is the curvature of the tautological quotient $\cO(1)$ on $\P_{sub}(V^*)$ and is computed as follows, see also \cite[proof of Theorem 3.6.17]{KobayashiVectorBundles},  \cite[Section 5.1.1]{MaMarinescu}, \cite{GriffithsHarris} or \cite{DemaillyBook}. The metric $h^V$ on $V$ induces a metric $h^{V^*}$ on $V^*$, a further metric $h^{\cO(-1)}$ on the tautological subbundle $\cO(-1)$ on $\P_{sub}(V^*)$ and a metric $h^{\cO(1)}$ on the dual line bundle $\cO(1)$. A holomorphic section $v$ in $V$ defines a fibrewise linear functional on $V^*$ by $f\mapsto (f,v)$, hence a section $\sigma_v$ in $\cO(1)$ on $\P_{sub}(V^*)$. If $f(x)\neq0$ one gets around the point $[f(x)]\in\P_{sub}(V^*)_x$
$$|\sigma_v([f])|^2_{h^{\cO(1)}}=\frac{|(f,v)|^2}{|f|^2_{h^{V^*}}}$$
and if $v$ doesn't vanish at $f(x)$
$$\eta_V=\frac{i}{2\pi}R^{\cO(1)}=-\frac{i}{2\pi}\partial\bar\partial\log|\sigma_v([f])|^2_{h^{\cO(1)}},$$
where $R^{\cO(1)}$ denotes the curvature of the Chern connection of ${\cO(1)}$ on $\P_{sub}(V^*)$.
The chosen holomorphic frame of $V$ trivializes $V$ giving $V\cong X\times V_x\cong X\times \C^{r+1}$. Let $e_j=s_j(x)$ and denote by $(e_j^*)_j$ the dual base of $(e_j)_j$. Denote by  $z$ a local coordinate function on $X$ at $x$. With respect to the above trivialization of $V$ the form $\eta_V$ decomposes as a sum of a vertical component $\eta_{V,vert}$, which is the corresponding Fubini-Study form in that fiber, a horizontal component $\eta_{V,hor}$, and a mixed component, which is immediately seen to vanish at all points lying over $x$ by the normality of the chosen frame $s$. At the point $(x,(1:0:...:0))$ a direct computation shows that
$$\eta_{V,hor}=\frac{i}{2\pi}\partial\bar\partial \log h^{V^*}_{0,\bar 0}=\frac{i}{2\pi(h^{V^*}_{0,\bar 0})^2 }\partial\bar\partial  h^{V^*}_{0,\bar 0}= \frac{i}{2\pi}\frac{\partial^2 h^{V^*}_{0,\bar 0}}{\partial z\partial\bar z}\d z\d\bar z.$$
We view $\P(F)$ as the image of a section $\sigma:X\to\P_{sub}(V^*)\cong X\times\P_{sub}^r$ passing through the point  $(x,[1:0:...:0])\in X\times\P_{sub}^r$ and given by $\sigma(z)=(z,[f(z)])$, 
\begin{equation}\label{eq:parametrizare}
f(z)=e_0^*+f_1(z)e_1^*+...f_r(z)e_r^*, 
\end{equation}
where the functions $f_j$ are holomorphic and vanish at $0$. Then 
$\int_{\P(F)\to X}\eta_V=\sigma^*\eta_V$, which at $x$ gives
$$ \sigma^*\eta_V= \sigma^*\eta_{V,hor}+\sigma^*\eta_{V,vert}= 
\frac{i}{2\pi}\partial\bar\partial \log h^{V^*}_{0,\bar 0}
+\frac{i}{2\pi}\partial\bar\partial \log(1+\sum_{j=1}^r|f_j)|^2)=
$$ 
$$\frac{i}{2\pi}(\frac{\partial^2 h^{V^*}_{0,\bar 0}}{\partial z\partial\bar z}+\sum_{j=1}^r|f_j'(0)|^2)\d z\d\bar z=
\frac{i}{2\pi}(-\frac{\partial^2 h^{V}_{0,\bar 0}}{\partial z\partial\bar z}+\sum_{j=1}^r|f_j'(0)|^2)\d z\d\bar z.$$

We will next compute $\eta_{\bigwedge}$ by working on $\P(V^*)$, which is isomorphic to $\P(\bigwedge ^rV)$ via the canonical isometry $\bigwedge^rV\cong V^*\otimes\det V$. We will first compute $\eta_{V^*}$ and then $\eta_{\bigwedge}=\eta_{V^*}+\frac{i}{2\pi}R^{\det V}=\eta_{V^*,vert}+\eta_{V^*,hor}+\frac{i}{2\pi}R^{\det V}$ at points over $x$. To do this we will use the isomorphism $\P(V^*)\cong\P_{sub}(V)$. Let $E:=\Ker(V\to F)$. We are interested in the restriction of 
$$\eta_{\bigwedge}^r=
\eta_{V^*,vert}^{r}+r\eta_{V^*,vert}^{r-1}\wedge\eta_{V^*,hor}+r\eta_{V^*,vert}^{r-1}\wedge\frac{i}{2\pi}R^{\det V}$$ 
to the subbundle $\P_{sub}(E)$ of $\P_{sub}(V)$ and in its fiber integral with respect to $\P_{sub}(E)\to X$. 

We compute the integrals of the three terms appearing on the right handside of the above expression of $\eta_{\bigwedge}^r$. The third integral gives 
$$\frac{ri}{2\pi}R^{\det V}=-
\frac{ri}{2\pi}\sum_{j=0}^r\partial_z\bar\partial_z h_{j,\bar j}^V.$$

Over $x$ with respect to our fixed frame $s$ the subspace $E_x$ of $V_x$ is given by the condition $e^*_0=0$. Consider the standard chart on $\P(E_x)$ corresponding to $e^*_1=1$ and set $w=(1,w_2,...,w_r)$, $w_1=1$, as "coordinate functions" similarly to Lemma \ref{lem: classical integrals}. Then at such a point we get
$$\eta_{V^*,hor}=
-\frac{i}{2\pi}\partial_z\bar\partial_z \log(\frac{1}{|w|^2_{h^V}})
=
\frac{i}{2\pi}\partial_z\bar\partial_z \log(\sum_{j=1}^r \sum_{k=1}^rh_{j\bar k}^Vw_j\bar w_k)=
$$
$$
\frac{i}{2\pi}\frac{\sum_{j=1}^r \sum_{k=1}^rw_j\bar w_k \partial_z\bar\partial_z  h_{j\bar k}^V
}{|w|^2}.
$$
Hence taking Lemma \ref{lem: classical integrals} into account the second integral over the fibres of $\P_{sub}(E)$ gives at $x$
$$\frac{i}{2\pi}\sum_{j=1}^r\partial_z\bar\partial_z  h_{j\bar j}^V=
-\frac{i}{2\pi}R^{\det V}-\frac{i}{2\pi}\partial_z\bar\partial_z  h_{0\bar 0}
.$$

In order to compute the first fibre integral we first parameterize $\P_{sub}(E)$ using the map
\begin{equation}\label{eq:parametrizare-duala}
\tau:X\times \P^{r-1}\to X\times \P^r, \ (x,[v_1:...:v_r])\mapsto (z, [-\sum_{j=1}^rv_jf_j(z):v_1:...:v_r]),
\end{equation}
where the functions $f_j$ are those appearing in formula \eqref{eq:parametrizare}.
The pull-back of $\eta_{V^*,vert}$ to $X\times \P^{r-1}$ over $x$ through this map equals
$$\tau^*\eta_{V^*,vert}=
\frac{i}{2\pi}\partial\bar\partial  \log (|\sum_{j=1}^rf_j(z)v_j|^2+|v_1|^2+ ...+|v_r|^2)$$
and we may as before integrate over the standard chart domain $\C^{r-1}$ where $v_1=1$. We set $w_1=1$ and let $w_2,...., w_r$ be the standard coordinate functions on $\C^{r-1}$ similarly to Lemma \ref{lem: classical integrals}. We also write $\d w_1=0$. Then we get 
$$
\tau^*\eta_{V^*,vert}=
\frac{i}{2\pi}\partial\bar\partial  \log (|\sum_{j=1}^rf_j(z)w_j|^2+|w|^2)= 
$$
$$
\frac{i}{2\pi}\partial \frac{(\sum_{j=1}^rf_j(z)w_j)((\sum_{j=1}^r\bar f'_j(z)\bar w_j)\d \bar z+\sum_{j=1}^r\bar f_j(z)\d \bar w_j) +\bar\partial |w|^2 }{|\sum_{j=1}^rf_j(z)w_j|^2+|w|^2}=
$$
$$
\frac{i}{2\pi} (\frac{|\sum_{j=1}^rf'_j(z)w_j|^2\d z\d \bar z+\partial\bar\partial |w|^2 }{|w|^2}-
\frac{\partial |w|^2\wedge \bar\partial |w|^2 }{|w|^4})=
$$
$$
\frac{i}{2\pi} \frac{|\sum_{j=1}^rf'_j(z)w_j|^2\d z\wedge  \bar z }{|w|^2}+\omega_{FS}
$$
and
$$
\tau^*\eta_{V^*,vert}^r=\frac{ri}{2\pi} \frac{|\sum_{j=1}^rf'_j(z)w_j|^2\d z\d \bar z }{|w|^2}\wedge\omega_{FS}^{r-1}
$$ 
which integrated in the fiber over $x$ gives
$$ 
\frac{i}{2\pi} \sum_{j=1}^r|f'_j(0)|^2\d z\d \bar z
$$
by use of Lemma \ref{lem: classical integrals}.

Summing the three integrals up we get the desired statement.
\end{proof}

We next change slightly the set-up of Lemma \ref{lem:fibre-integrals} and specialize to the case when the metric $h$ on $V$ is trivial. To compute a true  volume of $\P(F)$ it is enough to know  the  fibre integrals of higher powers of $\eta_V$ and wedge them with appropriate powers  of a K\"ahler form $\omega$ of a hermitian metric on $X$. In order to do this we start by computing the form
$$\alpha:=\int_{\P(F)\to X}\eta_V.$$

\begin{Lem}\label{lem:higher-fibre-integrals} 
In the above set-up if the metric $h$ on $V$ is trivial, we have 
$$\alpha:=\int_{\P(F)\to X}\eta_V=\int_{\P(F')\to X}\eta_{\bigwedge}^{r}$$
and
for all positive integers $p$:
$$\int_{\P(F)\to X}\eta_V^p=\alpha^{p} \ \text{and} \  \int_{\P(F')\to X}\eta_{\bigwedge}^{r+p}=0.$$
\end{Lem}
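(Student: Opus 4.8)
The plan is to handle the three assertions in turn, noting that two of them are essentially immediate and that the only real content is the vanishing $\int_{\P(F')\to X}\eta_{\bigwedge}^{r+p}=0$.

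First I would dispose of the easy parts. Since $F$ has rank one, $\P(F)\to X$ is an isomorphism, so the fibre integral $\int_{\P(F)\to X}$ is just restriction along the corresponding section of $\P(V)\to X$, i.e.\ the identity on forms. Writing $\alpha$ for the $(1,1)$-form $\eta_V|_{\P(F)}$ transported to $X$, this gives at once $\int_{\P(F)\to X}\eta_V^p=\alpha^p$ for every $p\ge1$. For the first displayed equality I would invoke Lemma \ref{lem:fibre-integrals}: since the metric $h$ on $V$ is trivial, the induced metric on $\det V$ is flat, so $R^{\det V}=0$ and the correction term $(r-1)\frac{i}{2\pi}R^{\det V}$ drops out, leaving $\alpha=\int_{\P(F)\to X}\eta_V=\int_{\P(F')\to X}\eta_{\bigwedge}^{r}$.

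For the remaining vanishing, the key point I would establish is that, under the trivialization $\P(V^*)\cong X\times\P^r$ afforded by the chosen frame, the form $\eta_{\bigwedge}$ restricted to $\P(F')$ is the pull-back of a single Fubini--Study form. Indeed $R^{\det V}=0$ gives $\eta_{\bigwedge}=\eta_{V^*}$, and because the matrix of the trivial metric $h^V$ is constant, the horizontal component $\eta_{V^*,hor}=-\frac{i}{2\pi}\partial_z\bar\partial_z\log(1/|w|^2_{h^V})$ and the mixed component of $\eta_{V^*}$ both vanish identically; this is the computation of Lemma \ref{lem:fibre-integrals} specialized to a flat frame, which is now normal at every point so that the verification is valid pointwise over the whole chart. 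Hence $\eta_{\bigwedge}=\eta_{V^*}=\mathrm{pr}_{\P^r}^*\,\omega_{FS}$ on $X\times\P^r$, where $\mathrm{pr}_{\P^r}$ is the second projection.

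Finally I would restrict along the parametrization $\tau$ of $\P(F')=\P_{sub}(E)$ given in \eqref{eq:parametrizare-duala}. Composing with $\mathrm{pr}_{\P^r}$ produces a map $G:=\mathrm{pr}_{\P^r}\circ\tau:X\times\P^{r-1}\to\P^r$ with $\tau^*\eta_{\bigwedge}=G^*\omega_{FS}$. Since $G$ takes values in the $r$-dimensional space $\P^r$, we have $\omega_{FS}^{r+p}=0$ there for every $p\ge1$, so $(\tau^*\eta_{\bigwedge})^{r+p}=G^*(\omega_{FS}^{r+p})=0$; thus $\eta_{\bigwedge}^{r+p}$ vanishes on $\P(F')$ and a fortiori so does its fibre integral. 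The one point requiring care, and the real obstacle, is the flatness bookkeeping of the previous paragraph: confirming that in the trivial-metric case all non-vertical contributions to $\eta_{V^*}$ disappear, so that the restricted form is literally pulled back from one fixed $\P^r$. Once this is secured, the dimension bound delivers the vanishing of the top powers for free.
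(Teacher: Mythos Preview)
Your proposal is correct. The first two assertions are handled exactly as in the paper: the equality $\alpha=\int_{\P(F')\to X}\eta_{\bigwedge}^{r}$ is Lemma \ref{lem:fibre-integrals} with $R^{\det V}=0$, and $\int_{\P(F)\to X}\eta_V^p=\alpha^p$ follows because $\P(F)\to X$ is an isomorphism so the fibre integral is pullback along the section; the paper makes this explicit by writing out $\sigma^*\eta_V=\frac{i}{2\pi}\partial\bar\partial\log(\sum_j|f_j|^2)$ and taking $p$-th powers, which is the same thing.

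For the vanishing $\int_{\P(F')\to X}\eta_{\bigwedge}^{r+p}=0$ you take a genuinely different route. The paper computes $\tau^*\eta_{\bigwedge}$ explicitly at a point over $x$ as $\frac{i}{2\pi}\frac{\partial_z(\sum w_jf_j)\wedge\overline{\partial_z(\sum w_jf_j)}}{|w|^2}+\omega_{FS}$, expands $(\tau^*\eta_{\bigwedge})^{r+p}$ by the binomial formula, and observes that the only term surviving the fibre integration over $\P^{r-1}$ carries the factor $\big(\frac{i}{2\pi}\frac{\partial_z(\sum w_jf_j)\wedge\overline{\partial_z(\sum w_jf_j)}}{|w|^2}\big)^{p+1}$, which vanishes for $p\ge1$ since it is the $(p+1)$-st power of a form of type $A\wedge\bar A$ with $A$ a $(1,0)$-form. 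Your argument bypasses this computation entirely: once you note that with the trivial metric $\eta_{\bigwedge}=\eta_{V^*}=\mathrm{pr}_{\P^r}^*\omega_{FS}$ on $X\times\P^r$, the restriction to $\P(F')$ is $G^*\omega_{FS}$ for a map $G$ into the $r$-dimensional target $\P^r$, so $\omega_{FS}^{r+p}=0$ there gives the vanishing for free. Your approach is shorter and more conceptual; the paper's explicit formula is not used elsewhere, so nothing is lost by avoiding it.
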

\begin{proof}
The first assertion is a direct consequence of Lemma \ref{lem:fibre-integrals}. 
For the second one we use the same approach and notations as in the proof of Lemma \ref{lem:fibre-integrals} with the difference that the metric $h$ is supposed to be trivial and that the dimension of $X$ is some arbitrary positive integer $n$. Then using again a section $\sigma:X\to\P_{sub}(V^*)\cong X\times\P_{sub}^r$ passing through the point  $(x,[1:0:...:0])\in X\times\P_{sub}^r$ and given by $\sigma(z)=(z,[f(z)])$, with $f$ as in equation \eqref{eq:parametrizare}
 to parameterize $\P(F)$ locally at $x$ and putting $f_0(z)=1$ we get 
 $$ \sigma^*(\eta_V)^p= (\sigma^*(\eta_V))^p=
(\frac{i}{2\pi}\partial\bar\partial \log(\sum_{j=0}^r|f_j|^2))^p.
$$ 
Thus 
$$\alpha^p=(\frac{i}{2\pi}\partial\bar\partial \log(\sum_{j=0}^r|f_j|^2))^p=\int_{\P(F)\to X}\eta_V^p.$$
Finally, using the parameterization \eqref{eq:parametrizare-duala} we obtain over $x$
$$
\tau^*\eta_{\bigwedge}=
\frac{i}{2\pi} \frac{\partial_z(\sum_{j=1}^rw_jf_j) \wedge \overline{ \partial_z(\sum_{j=1}^rw_jf_j) }}{|w|^2}+\omega_{FS},
$$
whence for non-negative $p$ the formula
$$\int_{X\times\P^{r-1}\to X}
(\tau^*\eta_{\bigwedge})^{r+p}=
$$
$$
\binom{r+p}{p+1}
\int_{X\times\P^{r-1}\to X}
(\frac{i}{2\pi} \frac{\partial_z(\sum_{j=1}^rw_jf_j) \wedge \overline{ \partial_z(\sum_{j=1}^rw_jf_j) }}{|w|^2})^{p+1}\wedge\omega_{FS}^{r-1},
$$
whose integrand vanishes for $p>0$.
\end{proof}

 To complete the passage of volume computation to $\P(F)$ back from $\P(F')$ it will be therefore enough to bound integrals of the type $\int_X\alpha^p\wedge \omega^{n-p}$ in terms of $\int_X\alpha\wedge \omega^{n-1}$ and $\int_X \omega^{n}$. This is done using  inequalities of Hodge Index type, cf. \cite[Remark 5.3]{DemaillyJDG93}. For this approach it is important to note that the form $\alpha$ is positive on $X$.

\section{Degrees of quotient sheaves and volumes}\label{section:quotient-sheaves}

This section can be seen as part of the proof of the Key Lemma (Lemma \ref{tehnica}). In it we relate degrees of coherent quotient sheaves of a given coherent sheaf $G$ on a reduced irreducible compact analytic space $X$ to (pseudo-) volumes of projectivized bundles. Let $n$ be the dimension of $X$. We will fix degree functions $\deg_n$, $\deg_{n-1}$, coming from differential forms $\omega_n$, $\omega_{n-1}$, as in Section \ref{subsection:degree-systems}. For simplicity we assume that $\omega_n$, $\omega_{n-1}$ are the $(n,n)$-, respectively the $(n-1,n-1)$-, components of $\d$-closed forms, but the arguments can be easily adapted to the case when $X$ is embedded in some complex manifold $X'$ and $\omega_n$, $\omega_{n-1}$ are restrictions of $\partial\bar\partial$-closed forms on $X'$. 
Unlike in \cite{TomaLimitareaI} we choose in this paper to reduce ourselves to quotients of rank one. This is done by taking top exterior powers. We start with a pure $n$-dimensional quotient $F$ of rank $r$ of $G$ and perform on it the following operations which correspond to four steps of the proof of Lemma \ref{tehnica}:
\begin{enumerate}
\item 
Go from $F$ to a rank one coherent sheaf $F_2:=(\bigwedge^r F)_{pure}$ on $X$. Here we denote by $E_{pure}$ the pure $d$-dimensional part of a $d$-dimensional coherent sheaf $E$. Show that $\deg_{n-1}F_2$, $\deg_nF_2$ are uniformly controlled by $\deg_{n-1}F$, $\deg_nF$, in a way depending on $G$ and $X$ only.
\item 
Go from $F_2$ to the irreducible component $P_2$ of $\P(F_2)$ covering the base $X$. Show that the pseudo-volume of $P_2$ defined as in Proposition \ref{prop:pseudovolume} is controlled by 
$\deg_{n-1}F_2$, $\deg_nF_2$.
\item 
Go from $P_2$ to an injective morphism $F_2\to F_1$ into a coherent sheaf $F_1$ of rank one on $X$.  
\item
Recover $F$ from the morphism $\bigwedge^r G\to F_1$.
\end{enumerate}  

{\bf Step 1.} By property (5) of Proposition \ref{prop:Todd} the $n$-dimensional degree of a coherent sheaf $F$ on $X$ equals 
$\rank(F)\vol_{\omega_n}(X)$. If $F$ is a quotient of $G$ this quantity is completely controlled by $X$ and by $G$. Thus $\deg_nF_2=\vol_{\omega_n}(X)\le \deg_nF$. In order to obtain controll over 
$\deg_{n-1}F_2$ too, we first normalize X and then desingularize. Let $f:Y\to X$ be the normalization map, let $g:Z\to Y$ be a desingularization of $Y$ and put $h:=f\circ g$.  
Let $F_2$ be the image of the morphism $  \bigwedge^r G\to (\bigwedge^r F)_{pure}$. It is also the pure part of the image of the morphism $  \bigwedge^r G\to \bigwedge^r F$. 

The following Lemma implies that the pseudo-degree $\deg_{f^*\omega_{n-1}}f^*F$ is bounded in terms of $\deg_{n-1}F$.

\begin{Lem} \label{lem:normalization control} Let $f:Y\to X $ be  a finite morphism, $F$ be a pure coherent sheaf on $X$, which is a quotient of a (pure) coherent sheaf $G$ and let $G'':=\Coker(G\to f_*f^*G)$. Fix a closed positive $(n-1,n-1)$-form $\Omega$ on $X$, where $n=\dim X$. Then
$$\deg_{n-1,f^*\Omega}(f^*F)\le \deg_{n-1,\Omega}(F)
+ \deg_{n-1,\Omega}(G'').$$
\end{Lem}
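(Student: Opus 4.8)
The plan is to transport the pseudo-degree on $Y$ down to $X$ and there compare $f_*f^*F$ with $F$. Since $f$ is finite it is proper and affine, so $f_!=f_*$ on coherent sheaves, $f_*$ is exact, and $f_*$ preserves the homological grading. By the naturality of $\tau$ recalled in Section \ref{sectionchern} we get $f_*\tau(f^*F)=\tau(f_*f^*F)$, and taking the component in degree $n-1$ gives $f_*\tau_{n-1}(f^*F)=\tau_{n-1}(f_*f^*F)$. Combining this with the projection formula $\int_{c}f^*\Omega=\int_{f_*c}\Omega$, applied to the proper map $f$ and the class $c=\tau_{n-1}(f^*F)$, yields
$$\deg_{n-1,f^*\Omega}(f^*F)=\deg_{n-1,\Omega}(f_*f^*F),$$
so it remains to estimate the right-hand side on $X$.

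Next I would analyse the unit of adjunction $u\colon F\to f_*f^*F$. At a generic point $x$ of an $n$-dimensional component of $X$ (here I use that $f$ dominates $X$, as the normalization does in the application) the stalk of $u$ is the map $F_x\to\bigoplus_{y\in f^{-1}(x)}F_x\otimes_{\kappa(x)}\kappa(y)$, which is injective because $\kappa(x)\hookrightarrow\bigoplus_y\kappa(y)$ and $F_x\otimes_{\kappa(x)}(-)$ is exact. Hence $\Ker u$ is supported in dimension $\le n-1$, and since $F$ is pure of dimension $n$ this forces $\Ker u=0$. Additivity of $\tau$ on exact sequences (Proposition \ref{prop:Todd}(4)) applied to $0\to F\xrightarrow{u} f_*f^*F\to\Coker u\to 0$ then gives
$$\deg_{n-1,\Omega}(f_*f^*F)=\deg_{n-1,\Omega}(F)+\deg_{n-1,\Omega}(\Coker u).$$

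The crucial step is to bound $\deg_{n-1,\Omega}(\Coker u)$ by $\deg_{n-1,\Omega}(G'')$. Writing $p\colon G\twoheadrightarrow F$ for the given surjection and $u_G\colon G\to f_*f^*G$ for the unit (with $\Coker u_G=G''$), naturality gives $u\circ p=(f_*f^*p)\circ u_G$, where $f_*f^*p$ is surjective because $f^*$ is right exact and $f_*$ is exact. Since $p$ is onto, $\Im u=(f_*f^*p)(\Im u_G)$, whence $\Coker u\cong f_*f^*G/(\Im u_G+\Ker(f_*f^*p))$ is a quotient of $f_*f^*G/\Im u_G=G''$. Because $f$ is generically an isomorphism (the normalization map), the same generic computation shows $u_G$ is generically an isomorphism, so $G''$, and with it its quotient $\Coker u$, is supported in dimension $\le n-1$. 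For such sheaves $\deg_{n-1,\Omega}$ reduces to $\int_{\Omega}$ of the associated effective $(n-1)$-cycle (Proposition \ref{prop:Todd}(3),(6)), hence is nonnegative and monotone under surjections; applying this to $0\to K\to G''\to\Coker u\to 0$ gives $\deg_{n-1,\Omega}(\Coker u)=\deg_{n-1,\Omega}(G'')-\deg_{n-1,\Omega}(K)\le\deg_{n-1,\Omega}(G'')$, since $\deg_{n-1,\Omega}(K)\ge 0$. Chaining the three displayed (in)equalities proves the Lemma.

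The main obstacle I expect is precisely this last positivity: the inequality relies on $G''$ and $\Coker u$ being supported in codimension $\ge 1$, so that their $(n-1)$-degrees are integrals of the positive form $\Omega$ against \emph{effective} cycles. This dimension bound is exactly where one uses that $f$ is generically an isomorphism, as the normalization is; without it the monotonicity of $\deg_{n-1}$ under the quotient $G''\twoheadrightarrow\Coker u$ need not hold. The opening projection-formula identity is, by contrast, routine once the naturality of the Todd transformation between the bivariant theories is invoked, but it should be spelled out that integration against homology Todd classes on $Y$ and on $X$ is compatible with $f_*$.
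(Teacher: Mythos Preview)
Your proof is correct and follows the same route as the paper: reduce to $\deg_{n-1,\Omega}(f_*f^*F)$ via the projection formula and Grothendieck--Riemann--Roch, then use the commutative square of adjunction units to exhibit $F'':=\Coker(F\to f_*f^*F)$ as a quotient of $G''$ and conclude by additivity. Your explicit justification of why $\deg_{n-1,\Omega}(K)\ge 0$ needs $G''$ (hence $K$) to be supported in dimension $\le n-1$, i.e.\ $f$ generically an isomorphism as the normalization is, makes precise a point the paper's proof leaves implicit in the phrase ``the desired inequality follows from the commutative diagram''.
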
 
\begin{proof}
Set $F'':=\Coker(F\to f_*f^*F)$. 
By the projection formula and Grothendieck-Riemann-Roch we have
$$
\deg_{n-1,f^*\Omega}(f^*F)=$$ $$\tau_{n-1}(f^*F)f^*\Omega=
f_*(\tau_{n-1}(f^*F))\Omega=
\tau_{n-1}(f_*f^*F)\Omega=\deg_{n-1,\Omega}(f_*f^*F)$$
and the desired inequality follows from the commutative diagram with exact rows and columns below

\begin{equation}
\label{diagr:normalization}
\begin{gathered}
\xymatrix{ 
0 \ar[r]&  G \ar[r]\ar[d] & f_*f^*G\ar[d]\ar[r] & G'' \ar[d]\ar[r] & 0 \\
0 \ar[r] &  F\ar[r]\ar[d] & f_*f^*F \ar[d]\ar[r] & F''\ar[r] \ar[d] & 0 \\
 & 0  & 0  & 0 & 
}
\end{gathered}
\end{equation}
\end{proof}

By applying the projection formula, Grothendieck-Riemann-Roch and the fact that for a coherent sheaf $E$ on $Y$ the sheaves $E$ and $g_*g^*E$ may differ only in codimension larger than one we get
$$\deg_{g^*f^*\omega_{n-1}}h^*F= \deg_{f^*\omega_{n-1}}g_*g^*f^*F=\deg_{f^*\omega_{n-1}}f^*F.
$$
If $E$ is a coherent sheaf of rank $r$ on the non-singular space $Z$ and $\Omega$ is a closed $(n-1,n-1)$-form on $Z$, we have 
$$\deg_\Omega E\ge\deg_\Omega E_{pure}=\deg_\Omega \bigwedge^r(E_{pure})=\deg_\Omega (\bigwedge^rE)_{pure},$$
the last equality holding due to the fact that the sheaves $\bigwedge^r(E_{pure})$ and $(\bigwedge^rE)_{pure}$ may differ only in codimension larger than one. In particular we get
$$\deg_{f^*\omega_{n-1}}f^*F=\deg_{h^*\omega_{n-1}}h^*F\ge \deg_{h^*\omega_{n-1}}(\bigwedge^rh^*F)_{pure}.
$$
Consider now the following commutative diagrams

\begin{equation}
\label{diagr:exterior-powers}
\begin{gathered}
\xymatrix{ 
h^*\bigwedge^rG \ar[r]^\cong& \bigwedge^rh^* G \ar@{->>}[r] &\bigwedge^r h^*F\ar[r]^\cong & h^*\bigwedge^rF \ar@{->>}[d]\ar@{->>}[r] & (\bigwedge^rh^*F)_{pure} \\
 &  & & h^* F_2\ar@{->>}[ur]  & 
}
\end{gathered}
\end{equation}

\begin{equation}
\label{diagr:push-forward}
\begin{gathered}
\xymatrix{ 
  \bigwedge^rG \ar@{->>}[r]\ar[d] & \bigwedge^rF\ar[d]\ar@{->>}[r] & F_2 \ar[d]&  \\
  h_*h^*\bigwedge^rG\ar[r] & h_*h^*\bigwedge^rF \ar[r] & h_*h^*F_2\ar[r]  & h_*((\bigwedge^rh^*F)_{pure})
  }
\end{gathered}
\end{equation}
and note that $(\bigwedge^rh^*F)_{pure}$ is isomorphic to $(h^*F_2)_{pure}$. After applying $g_*$ we get an exact sequence
$$0\to g_*(\Tors(h^*F_2))\to g_*h^*F_2\to g_*((\bigwedge^rh^*F)_{pure})\to R^1g_*(\Tors(h^*F_2))$$
so the morphism $g_*h^*F_2\to g_*((\bigwedge^rh^*F)_{pure})$ is an isomorphism in codimension one and it will remain so after application of $f_*$. Moreover the natural morphism $F_2\to h_*h^*F_2$ is injective since $F_2$ is pure and thus
$$\deg_{\omega_{n-1}} F_2\le \deg_{\omega_{n-1}} h_*h^*F_2=
\deg_{\omega_{n-1}} h_*((\bigwedge^rh^*F)_{pure})=$$ $$
 \deg_{h^*\omega_{n-1}}(\bigwedge^rh^*F)_{pure}\le
\deg_{f^*\omega_{n-1}}f^*F$$
and the last term is controlled in terms of $\deg_{\omega_{n-1}} F$, $G$ and $X$.

{\bf Step 2.} Let $P_2$ be the irreducible component of $\P(F_2)$ which covers $X$. The projection $p:P_2\to X$ is bimeromorphic so $\int_{P_2}p^*\omega_n=\int_X\omega_n=\deg_{\omega_n}F_2$, which settles the $\omega_n$-component of the pseudo-metric on $P_2$. For the $\omega_{n-1}$-component it will be enough to show that $\deg_{p^*\omega_{n-1}}\cO_{P_2}(1)$ is controlled by $\deg_{\omega_{n-1}}F_2$. We consider the cartesian square
\begin{equation}
\label{diagr:cartesian}
\begin{gathered}
\xymatrix{ 
  P_{2,Y}\ar[d]^{p'}\ar[r]^{f'} & P_2 \ar[d]^p  \\
  Y \ar[r]^f & X,
  }
\end{gathered}
\end{equation}
where $f$ is as before the normalization map. Now 
$$\deg_{p^*\omega_{n-1}}\cO_{P_2}(1)\le \deg_{p^*\omega_{n-1}}f'_*f'^*\cO_{P_2}(1)=\deg_{f'^*p^*\omega_{n-1}}f'^*\cO_{P_2}(1)
$$
and 
$f'^*\cO_{P_2}(1)$ is a quotient of $f'^*p^*F_2=p'^*f^*F_2$ 
so $$\deg_{f'^*p^*\omega_{n-1}}f'^*\cO_{P_2}(1)\le \deg_{f'^*p^*\omega_{n-1}}f'^*p^*F_2=\deg_{f^*\omega_{n-1}}p'_*f'^*p^*F_2=$$ $$\deg_{f^*\omega_{n-1}}p'_*p'^*f^*F_2=\deg_{f^*\omega_{n-1}}f^*F_2
$$
and the last term is controlled by $\deg_{\omega_{n-1}}F_2$ by Lemma \ref{lem:normalization control}.

{\bf Step 3.}
Under the previous notations set $F_1:=p_*\cO_{P_2}(1)$. The desired injective morphism is given by the composition of the natural morphisms $F_2\to p_*p^*F_2\to p_*\cO_{P_2}(1)=F_1$. 

{\bf Step 4.} The recovery of $F$ is a consequence fo the following Lemmata.
\begin{Lem}\label{lem:restriction} (Pure quotients are determined by their restriction to an open dense subset.) Let $G$ be a coherent sheaf on a pure $n$-dimensional space $X$ and let $U\subset X$ be a dense Zariski open subspace. A pure $d$-dimensional quotient $G\to F$ of $G$ is then completely determined by its restriction to $U$.
\end{Lem}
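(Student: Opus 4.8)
The plan is to translate the statement into one about kernels and then to characterize the kernel intrinsically in terms of its restriction. A quotient $G\to F$ is, up to isomorphism, the same datum as the subsheaf $K:=\Ker(G\to F)\subseteq G$, and two quotients of $G$ coincide precisely when their kernels agree; so it suffices to show that $K$ can be recovered from $K|_U=\Ker(G|_U\to F|_U)$, since then $F=G/K$ is recovered as well.

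Write $j:U\hookrightarrow X$ for the open inclusion and $Z:=X\setminus U$, a nowhere dense analytic subset. I would define $\tilde K\subseteq G$ to be the preimage of $j_*(K|_U)$ under the natural restriction morphism $\iota:G\to j_*j^*G$; that is, $\tilde K$ consists of those local sections of $G$ whose restriction to $U$ lies in $K|_U$. By construction $\tilde K$ depends only on $G$, on $U$, and on the subsheaf $K|_U$, hence only on the restricted quotient. The entire content of the lemma is therefore the equality $K=\tilde K$.

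The inclusion $K\subseteq\tilde K$ is immediate. For the reverse inclusion, sending $s\in\tilde K$ to its image $q(s)\in F$ identifies $\tilde K/K$ with the subsheaf $\Gamma_Z(F)\subseteq F$ of sections supported on $Z$: indeed, $s\in\tilde K$ means exactly that $q(s)|_U=0$. Thus $\tilde K=K$ as soon as $\Gamma_Z(F)=0$. Now $\Gamma_Z(F)$ is a subsheaf of $F$ supported on $Z\cap\Supp(F)$, a set of dimension strictly less than $d$ once $U$ is dense in $\Supp(F)$; it is therefore a subsheaf of $F$ of dimension $<d$ and so vanishes by the purity of $F$. (In the only case used in Step~4, namely $d=n$ with $\Supp(F)=X$, the bound $\dim(Z\cap\Supp F)\le\dim Z<n$ is automatic from the Zariski density of $U$ in the pure $n$-dimensional space $X$.) Consequently $K=\tilde K$ is determined by $q|_U$, and hence so is $F=G/K$.

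The only delicate point is the vanishing $\Gamma_Z(F)=0$, which is exactly where purity enters: a pure $d$-dimensional sheaf admits no nonzero subsheaf of dimension $<d$, and the sections supported on the lower-dimensional set $Z\cap\Supp(F)$ constitute such a subsheaf. Everything else is a formal manipulation of kernels and of the restriction functor $j^*$ together with its adjoint $j_*$.
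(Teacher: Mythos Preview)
Your argument is correct and rests on the same idea as the paper's proof: the discrepancy between a candidate kernel and the true kernel injects into $F$, is supported on the lower-dimensional complement $Z=X\setminus U$, and therefore vanishes by purity. The only difference is packaging---you give an explicit reconstruction formula $K=\iota^{-1}\bigl(j_*(K|_U)\bigr)$, whereas the paper takes two pure quotients $F_1,F_2$ with the same restriction, sets $E=E_1\cap E_2$, and observes that each $F_j$ is the saturation of $G/E$ (the cokernels $C_j\cong E_j/E$ being lower-dimensional subsheaves of the pure $F_j$); your version is arguably more direct, and your remark that the dimension bound $\dim(Z\cap\Supp F)<d$ requires $d=n$ (or at least that $U$ meet every top-dimensional component of $\Supp F$) is a useful observation that the paper leaves implicit.
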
 
\begin{proof}
Suppose that two quotients $G\to F_1$, $G\to F_2$ with $F_1$, $F_2$ pure $n$-dimensional have the same restriction to $U$. Set $E_j:=\Ker(G\to F_j)$, $1\le j\le2$, and $E:=E_1\cap E_2$. From the diagrams
\begin{equation*}
\label{diagr: pure}
\begin{gathered}
\xymatrix{ 
&  &  & C_j \ar@{^(->}[d] &  \\
0 \ar[r] & E \ar[r]\ar@{^(->}[d] & G \ar[d]\ar[r] & G/E \ar[r] \ar@{->>}[d] & 0 \\
0 \ar[r] & E_j \ar[r]\ar@{->>}[d] & G \ar[r] & F_j \ar[r] & 0 \\
 & C_j & & & \\
}
\end{gathered}
\end{equation*}
for $j\in\{1,2\}$
we see that the quotients $G\to F_j$  both appear as the saturation of $G\to G/E$.
\end{proof}

\begin{Lem}\label{lem:determinant_trick} (determinant trick)
Let $A_:=\cO_{X,x}$ be the local ring of an analytic space of pure dimension $d$ and let $F$ be a pure $A$-module of dimension $n$ and of rank $r$ over each of its associated points. If $F$ appears as a quotient $G\to F$ of an $A$-module of finite type $G$, then $F$ can be completely recovered from the composition of morphisms
$$G\to \Hom(\bigwedge^{r-1}G,\bigwedge^rG)\to \Hom(\bigwedge^{r-1}G,(\bigwedge^rF)_{pure}),$$
where the first morphism $\phi:G\to \Hom(\bigwedge^{r-1}G,\bigwedge^rG)$ is given by taking exterior product, $g\mapsto(g_1\wedge...\wedge g_{r-1}\mapsto g_1\wedge...\wedge g_{r-1}\wedge g)$, and the second one $\psi: \Hom(\bigwedge^{r-1}G,\bigwedge^rG)\to\Hom(\bigwedge^{r-1}G,(\bigwedge^rF)_{pure})$ is composition with the natural morphism $\bigwedge^{r}G\to(\bigwedge^rF)_{pure}$. More precisely, one has $F\cong\Im(\psi\circ\phi)$.
\end{Lem}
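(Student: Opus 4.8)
The plan is to exhibit the composite $\psi\circ\phi$ as factoring through the surjection $G\twoheadrightarrow F$ and then to show that the resulting map out of $F$ is injective, so that $F$ is recovered as its image. First I would check that $\psi\circ\phi$ annihilates $\Ker(G\to F)$: if $g$ maps to $0$ in $F$, then for any $g_1,\dots,g_{r-1}\in G$ the element $g_1\wedge\cdots\wedge g_{r-1}\wedge g$ already dies in $\bigwedge^rG\to\bigwedge^rF$, hence also under the natural map $\bigwedge^rG\to(\bigwedge^rF)_{pure}$. Thus $\psi\circ\phi$ descends to a morphism $\bar\alpha\colon F\to\Hom(\bigwedge^{r-1}G,(\bigwedge^rF)_{pure})$ with $\Im\bar\alpha=\Im(\psi\circ\phi)$; since the corestriction $F\twoheadrightarrow\Im\bar\alpha$ is tautologically surjective, the claim $F\cong\Im(\psi\circ\phi)$ reduces to the injectivity of $\bar\alpha$.

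Next I would use purity to turn injectivity into a local question at the associated points of $F$. Were $\Ker\bar\alpha$ non-zero, it would be a non-zero submodule of the pure module $F$, hence itself of dimension $n$, and so it would possess an associated point $\mathfrak p$ with $\dim A/\mathfrak p=n$; this $\mathfrak p$ is then also an associated point of $F$, and $(\Ker\bar\alpha)_{\mathfrak p}=\Ker(\bar\alpha_{\mathfrak p})\neq0$. Consequently it suffices to prove that $\bar\alpha_{\mathfrak p}$ is injective for every $n$-dimensional associated point $\mathfrak p$ of $F$.

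At such a point the statement becomes the classical determinant trick. Since $\mathfrak p$ is a minimal prime of $\Supp F$, the lower-dimensional kernel of $\bigwedge^rF\to(\bigwedge^rF)_{pure}$ localises to $0$ there, and as exterior powers commute with localisation one gets $((\bigwedge^rF)_{pure})_{\mathfrak p}=\bigwedge^r(F_{\mathfrak p})$; the rank hypothesis makes $F_{\mathfrak p}$ an $r$-dimensional space over the residue field at the associated point, so $\bigwedge^r(F_{\mathfrak p})$ is one-dimensional. Because $G_{\mathfrak p}\to F_{\mathfrak p}$ is onto, so is $\bigwedge^{r-1}G_{\mathfrak p}\to\bigwedge^{r-1}F_{\mathfrak p}$, and I would factor $\bar\alpha_{\mathfrak p}$ as $F_{\mathfrak p}\xrightarrow{\ \mu\ }\Hom(\bigwedge^{r-1}F_{\mathfrak p},\bigwedge^rF_{\mathfrak p})\hookrightarrow\Hom(\bigwedge^{r-1}G_{\mathfrak p},\bigwedge^rF_{\mathfrak p})$, the second arrow being injective as precomposition with a surjection. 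Finally $\mu(v)(\zeta)=\zeta\wedge v$ is injective on an $r$-dimensional space: completing a non-zero $v$ to a basis $v,e_2,\dots,e_r$ gives $\mu(v)(e_2\wedge\cdots\wedge e_r)=\pm\,e_2\wedge\cdots\wedge e_r\wedge v\neq0$. This yields the injectivity of $\bar\alpha_{\mathfrak p}$, hence of $\bar\alpha$, and the lemma follows.

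The part I expect to require the most care is the reduction from the global injectivity of $\bar\alpha$ to this fibrewise determinant computation: one must invoke purity to guarantee that a hypothetical kernel is detected at an $n$-dimensional associated point, and one must verify that \emph{both} operations entering the target — forming the pure part and taking the $r$-th exterior power — commute with localisation at such a point, so that the codomain collapses to the one-dimensional space $\bigwedge^r(F_{\mathfrak p})$ on which the determinant isomorphism is available. The remaining verifications (that $\psi\circ\phi$ kills the kernel, and the linear-algebra injectivity of $\mu$) are routine.
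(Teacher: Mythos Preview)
Your proof is correct and follows essentially the same approach as the paper: both show $\Ker(G\to F)\subset\Ker(\psi\circ\phi)$, identify the cokernel of this inclusion with a submodule of $F$, verify that it vanishes at the generic points of the associated components (where $F$ is free of rank $r$ and the determinant pairing is an isomorphism), and conclude by purity. Your write-up supplies more of the localisation details than the paper does, but the argument is the same.
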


\begin{proof}
We will show that $\ker(\psi\circ\phi)=\ker(G\to F)$.

Set $E:=\Ker(G\to F)$. It is clear that $E\subset \Ker(\psi\circ\phi)$. We then get a commutative diagram with exact rows and columns.
\begin{equation*}
\label{diagr: on X}
\begin{gathered}
\xymatrix{ 
&  &  & C \ar@{^(->}[d] &  \\
0 \ar[r] & E \ar[r]\ar@{^(->}[d] & G \ar[d]\ar[r] & F \ar[r] \ar@{->>}[d] & 0 \\
0 \ar[r] & \Ker(\psi\circ\phi) \ar[r]\ar@{->>}[d] & G \ar[r] & \Im(\psi\circ\phi) \ar[r] & 0 \\
 & C & & & \\
}
\end{gathered}
\end{equation*}
Since the property we want to show is valid at the generic point of each associated component of $F$ (where $F$ is free of rank $r$) it follows that $C:=\Coker(E\to \Ker(\psi\circ\phi))$ is at most $n-1$ dimensional and we conclude by the purity of $F$.
\end{proof}


\section{Bounded sets of coherent sheaves}\label{sec:boundedness}

Starting from this section all complex spaces will be supposed to be Hausdorff and second countable, i.e. allowing a countable base for their topology. In particular they will be $\sigma$-compact, cf. \cite[Theorem 12.12]{Bredon}. If $X$ is an analytic space over $S$ with projection morphism $p:X\to S$ and $F$ is a coherent sheaf on $X$ and if $T \to S$ is a morphism, we will write as usual $X_T:= X\times_S T$ and $F_T$ for the base change. The projections $X_T\to T$ will be denoted by $p_T$. For a {\em germ of an analytic set around a compact set} $K$ in the sense of \cite[VII.2(b)]{BS} we will use the notation $\tilde K$.
When speaking of morphisms or sheaves defined on a germ $\tilde K$ of an analytic space around a compact set $K$  we mean of course that such objects are defined on some analytic space containing $K$ and representing the germ  $\tilde K$.

If $X$ is a scheme of finite type over a noetherian scheme $S$, then according to \cite{Grothendieck-theHilbertScheme}  one can roughly say that a set of isomorphism classes of coherent sheaves on the fibres of $X/S$ is {\em bounded} if its elements are among the fibres of an algebraic family of coherent sheaves parametrized by a scheme $S'$ of finite type over $S$. It is not directly clear how to formulate a definition of boundedness in a complex geometrical set-up, which would be also effective in proving properness statements. 
In \cite{TomaLimitareaI} we gave such a formulation in the relative case  but used it in an absolute setting essentially. An equivalent way to formulate that definition in the absolute case is the following. A minor change made here as compared to \cite{TomaLimitareaI} is that we no longer ask that the compact set $K$ be a semi-analytic Stein compact set. One can reduce oneself to a situation where this property is satisfied and this is important in some of the arguments, but it is 
not necessary to include it in the definition.

\begin{Def}\label{def:bounded-absolute-case} 
Let $X$ be an analytic space and let $\gE$ be a set of isomorphism classes of coherent sheaves on  $X$. We say that the set $\gE$ is {\em bounded} if there exist a germ $\tilde K$ of an analytic space around a compact set $K$ and a coherent sheaf $\tilde E$ on $X\times\tilde K$ such that $\gE$ is contained in the set of isomorphism classes of fibers of $\tilde E$ over points of $K$, or, in other words, if there exist an analytic space $S'$, a compact subset $K\subset S'$ and a coherent sheaf $E$ on $X\times S'$ such that $\gE$ is contained in the set of isomorphism classes of fibers of $E$ over points of $K$.
\end{Def}
For schemes defined over $\C$ this definition recovers Grothendieck's definition, \cite[Remark 3.3]{TomaLimitareaI}. 

In the relative case it will be convenient to give a definition of boudedness which is less restrictive than the one proposed in \cite{TomaLimitareaI}. 

\begin{Def}\label{def:bounded-relative-case1}
Let $X$ be an analytic space 
over an analytic space $S$ and $\gE$ a set of isomorphism classes of coherent sheaves on the fibres $X_s$ of $X\to S$. We say that the set $\gE$ is {\em (relatively) bounded (over $S$)} if there exist an at most countable disjoint union $\tilde K:=\coprod\tilde K_i$ of germs of analytic spaces around compact sets over $S$ with $\coprod K_i$ proper over $S$ and a coherent sheaf $\tilde E$ on $X_{\tilde K}$ such that $\gE$ is contained in the set of isomorphism classes of fibers of $\tilde E$ over points of $\coprod K_i$. 
\end{Def}

{\bf Warning:} If $X$ and $S$ are complex spaces and $E$ is a coherent sheaf on $X\times S$ then the isomorphism classes of its fibres $E_s$ over $S$ form a relatively bounded set over $S$ by our Definition \ref{def:bounded-relative-case1}, but the same classes when seen on $X$ in the absolute setting may form an unbounded set according to Definition \ref{def:bounded-absolute-case}. An example is provided by the family given by $(\cO_{\P^1}(n))_{n\in\Z}$ over $\P^1\times \Z$. Another example, this time over an irreducible base $S$, is the Picard line bundle over $X\times \C^*$, where $X$ is an Inoue surface; in this case $\Pic(X)\cong\C^*$. 

The following Remark gives a way of rephrasing Definition \ref{def:bounded-relative-case1}.
\begin{Rem}\label{rem:equivalent-definition} 
A set $\gE$ of isomorphism classes of coherent sheaves on the fibres of $X\to S$  is  (relatively) bounded (over $S$) if and only if for each compact subset $L$ of $S$ there exist a germ $\tilde K$ of analytic space around around a  compact set $K$ over $S$  and a coherent sheaf $\tilde E$ on $X_{\tilde K}$ such that the elements of $\gE$ in each fiber  $X_s$ of $X\to S$ with $s\in L$ are  contained in the set of isomorphism classes of fibers of $\tilde E$ over points of $\tilde K$ covering $s$. \end{Rem}
 
In the above definition it is clear that $\gE$ can be viewed as set of sheaves defined on (some of) the fibers of 
$X_{\tilde K}\to\tilde K$. (We will be loose on the above terminology and often say ``sheaves'' instead of ``isomorphism classes of coherent sheaves''.)

As in \cite{TomaLimitareaI} one can prove the following basic properties of bounded sets of sheaves. We will not reproduce the similar proof here

\begin{Prop}\label{prop:limitari-diverse} 
Let $X$ be an analytic space
 over an analytic space $S$ and let $\gE$, $\gE'$ be two bounded sets of isomorphism classes of sheaves on the fibers of $X\to S$. 
 Suppose that there exists a closed subset $L$ of $X$ proper over $S$ such that the supports of the sheaves from $\gE$ or from $\gE'$ are contained in $L$.
 Then the following sets are bounded as well:
\begin{enumerate}
 \item The sets of kernels, cokernels and images of sheaf homomorphisms $F\to F'$, when the isomorphism
 classes of $F$ and $F'$ belong to $\gE$ and $\gE'$ respectively.
\item The set of isomorphism classes of extensions of $F$ by $F'$, for
  $F$ and $F'$ as above.
\item The set of isomorphism classes of tensor products $F\otimes F'$,  for
  $F$ and $F'$ again as above.
\end{enumerate}
\end{Prop}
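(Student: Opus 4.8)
The plan is to realize all three constructions as fibres of explicit coherent sheaves built on projectivized parameter spaces, and to keep the properness requirement of Definition \ref{def:bounded-relative-case1} under control by checking boundedness locally over $S$ through Remark \ref{rem:equivalent-definition}. Let $\tilde E$ on $X_{\tilde K}$ and $\tilde E'$ on $X_{\tilde K'}$ be families realizing $\gE$ and $\gE'$. Since the supports lie in $L$, which is proper over $S$, every morphism to $S$ that occurs below is proper on the relevant supports, so Grauert's coherence theorem applies throughout. First I would reduce to the flat case: by a (locally finite) analytic flattening stratification of $\tilde E$ over $\tilde K$ and of $\tilde E'$ over $\tilde K'$, together with $\sigma$-compactness, one may replace $\tilde K$, $\tilde K'$ by countable disjoint unions of germs around compacta over which $\tilde E$, $\tilde E'$ are flat; by Remark \ref{rem:equivalent-definition} this is checked over each compact $L\subset S$ separately, where only finitely many strata intervene (this reduction is really needed only for part (2)). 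Then I would pass to the fibre product germ $W:=\tilde K\times_S\tilde K'$, a germ around the compact $K\times_S K'$ which is closed in $K\times K'$ and proper over $S$, and pull the families back to flat coherent sheaves $\cF$, $\cF'$ on $X_W$. Part (3) is then immediate: $\cF\otimes_{\cO_{X_W}}\cF'$ is coherent and, since $\otimes$ commutes with restriction to fibres, its fibre over $(k,k')\in K\times_S K'$ is $\tilde E_k\otimes\tilde E'_{k'}$, so the tensor products form a bounded set.

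For part (1) I would use the relative homomorphism linear space of $\cF$, $\cF'$ over $W$: after stratifying $W$ so that $\mathcal{H}om(\cF,\cF')$ becomes locally free and compatible with base change on each stratum, I would form the associated projective bundle $\P\to W$, whose fibres parametrize the nonzero homomorphisms $F\to F'$ and which is proper over $W$. The zero homomorphism is treated apart, its kernel, image and cokernel being $\cF_w$, $0$, $\cF'_w$, already bounded. Over $\P$ there is a universal homomorphism $u\colon \cF_\P\to\cF'_\P$, well defined up to twist by a line bundle pulled back from $\P$, which is trivial on each fibre of $X_\P\to\P$ and hence does not affect fibrewise isomorphism classes. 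I would then form $\Ker(u)$, $\Im(u)$, $\Coker(u)$ on $X_\P$ and apply a further flattening stratification making $\Im(u)$ and $\Coker(u)$ flat on each stratum; on such a stratum the formation of kernel, image and cokernel commutes with base change, so the fibres of the three sheaves are exactly the kernel, image and cokernel of the corresponding map $F\to F'$.

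Part (2) is analogous but easier as regards flatness. After stratifying $W$ so that the relative $\mathcal{E}xt^1(\cF,\cF')$ is locally free and compatible with base change, I would form the projective bundle of this sheaf (a nonzero extension class and its nonzero scalar multiples give isomorphic extensions, while the split extension $F\oplus F'$ is bounded directly as a fibre of $\cF\oplus\cF'$) and carry the universal extension $0\to\cF'(1)\to\cE\to\cF\to 0$ over it. Here the key simplification is that $\cE$, being an extension of the flat sheaves $\cF$ and $\cF'$, is automatically flat over the parameter space, so its restriction to each fibre is the corresponding extension of $F$ by $F'$, and no flattening of the middle term is required.

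To conclude in each case I would fix a compact $L\subset S$ and let $\mathcal{P}_L$ be the compact part of the relevant projectivized parameter space lying over the points of $K\times_S K'$ whose image in $S$ lies in $L$. By local finiteness only finitely many strata $\Sigma$ meet $\mathcal{P}_L$; for each I would take the compactum $\overline{\Sigma}\cap\mathcal{P}_L$ with a neighbourhood in $\P$ as germ, equipped with the restriction of the appropriate universal sheaf. Each isomorphism class of kernel, image, cokernel or extension is represented by a point lying in the open flat stratum, where the fibre of the universal sheaf is the correct object; the finitely many germs assemble into a single germ around a compact set over $L$, so Remark \ref{rem:equivalent-definition} yields the bound. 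The main obstacle is exactly this interplay between flatness and properness: the formation of kernels, images and cokernels does not commute with base change, forcing flattening stratifications and hence parameter strata that are merely locally closed; reconciling this with the requirement that the parameter be a countable disjoint union of germs around compacta proper over $S$ is the delicate point, resolved by testing boundedness locally over $S$, exploiting local finiteness of analytic flattening, and using stratum closures as the compacta while keeping the relevant points in the open flat loci.
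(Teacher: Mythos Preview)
The paper does not actually prove this proposition; it only states that the proof goes ``as in \cite{TomaLimitareaI}'' and declines to reproduce it. Your argument is essentially that standard proof: pass to a common parameter space via fibre product, use the representability of the relative $\Hom$ functor by a linear space over the base (this is Flenner's result \cite{FlennerEXT}, which the paper itself invokes later in the proof of Corollary~\ref{cor:Douady}) and the analogous universal extension for $\Ext^1$, then flatten and conclude via Remark~\ref{rem:equivalent-definition}. So your approach and the paper's intended one coincide.

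Two small comments. First, when you write ``stratifying $W$ so that $\mathcal{H}om(\cF,\cF')$ becomes locally free'', what you mean is the relative $\Hom$ sheaf $p_{W*}\mathcal{H}om(\cF,\cF')$ on $W$, not the sheaf $\mathcal{H}om$ on $X_W$; in fact you can bypass this stratification entirely by using Flenner's linear space directly and passing to its associated projective variety as in \cite[Section 1.9]{Fischer}, exactly as the paper does in the proof of Corollary~\ref{cor:Douady}. Second, your handling of the tension between flattening (which produces locally closed strata) and the properness requirement in Definition~\ref{def:bounded-relative-case1} is correct and is indeed the delicate point; working locally over $S$ via Remark~\ref{rem:equivalent-definition} and using local finiteness of the stratifications is the right way through.
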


Our main result is the following boundedness criterion.

\begin{Th}\label{theorem:principala}
Let $S$ be an analytic space and $X$ an analytic space proper over $S$. Suppose that $X/S$ is endowed with a  relative degree system $(\deg_{d,s},...,\deg_{0,s})_{s\in S}$ coming from  differential forms. 
Let $\gF$ be a set of isomorphism classes of coherent sheaves of dimension at most $d$ on the fibres of $X/S$. For each $s\in S$ we denote by $\gF_s$ the subset of $\gF$ containing the sheaves on the fibre $X_s$ of $X/S$.
 Then the set $\gF$ is bounded if and and only if the following conditions are fulfilled:
\begin{enumerate}
\item There exists a bounded set $\gE$ of isomorphism classes of coherent sheaves on the fibres of $X/S$
 such that each element of $\gF$ is a quotient of an element of $\gE$.
\item There exists a system $\delta=(\delta_d,...,\delta_0)$ of continuous functions on $S$ such that for all $s\in S$, for all $j\in\{0,...,d\}$ and for all $F\in\gF_s$ one has
$$\deg_{j,s} (F) \le\delta_j(s).$$
\end{enumerate}
\end{Th}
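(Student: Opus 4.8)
The plan is to prove the two implications separately, with essentially all of the work concentrated in the sufficiency direction.

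For necessity, suppose $\gF$ is bounded, witnessed by a coherent sheaf $\tilde E$ on $X_{\tilde K}$ over a disjoint union $\coprod\tilde K_i$ with $\coprod K_i$ proper over $S$. Condition (1) is then immediate on taking $\gE:=\gF$, every sheaf being a quotient of itself. For condition (2) I would use that the given relative degree system comes from differential forms and is therefore continuous on flat families in the sense of Definition \ref{def:relative-degrees} (cf.\ Corollary \ref{cor:constance} and the comments following it). Evaluating $\deg_{j,\cdot}$ on the flat family $\tilde E$ yields continuous functions on $\coprod K_i$; since $\coprod K_i\to S$ is proper, the fibrewise suprema define locally bounded, upper semicontinuous functions on $S$, which can be dominated by a system $\delta=(\delta_d,\dots,\delta_0)$ of continuous functions as required.

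For sufficiency I would first localize over $S$ by means of Remark \ref{rem:equivalent-definition}, fixing a compact $L\subset S$, and then use the boundedness of $\gE$ to replace the varying source sheaves by (restrictions of) a single coherent sheaf $G$ on $X_{\tilde K_0}$, over a germ $\tilde K_0$ around a compact set covering $L$. Filtering each $F\in\gF$ by the dimension of support and invoking Proposition \ref{prop:limitari-diverse} (on images, kernels and extensions) I would reduce to bounding, for each fixed $\delta\in\{0,\dots,d\}$, the set of pure $\delta$-dimensional quotients of $G$ of a fixed generic rank $r$ on their support, these being controlled by the two consecutive degrees $\deg_{\delta,s}$ and $\deg_{\delta-1,s}$. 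The crux, which is the content of the Key Lemma \ref{tehnica}, is to turn the degree bounds of (2) into a volume bound. Following the four-step construction of Section \ref{section:quotient-sheaves}, I would pass from $F$ to the pure rank-one sheaf $F_2=(\bigwedge^r F)_{pure}$, whose top two degrees are controlled after normalizing and desingularizing the support via Lemma \ref{lem:normalization control}, and then to the irreducible component $P_2$ of $\P(F_2)$ covering the base. The degree bounds translate into a bound on the \emph{pseudo}-volume of $P_2$ assembled from $\omega_\delta$ and $\omega_{\delta-1}$ exactly as in Proposition \ref{prop:pseudovolume} and Proposition \ref{prop: forme pozitive pe spatii relative}.

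The decisive step is to upgrade this pseudo-volume bound to a bound on a genuine volume, so that Bishop's compactness theorem for cycle spaces applies. This is supplied by the local computations of Section \ref{section:volume}: the fibre-integral identities of Lemma \ref{lem:fibre-integrals} and Lemma \ref{lem:higher-fibre-integrals} express the relevant volume forms through the positive form $\alpha:=\int_{\P(F)\to X}\eta_V$, and Hodge-index type inequalities bound $\int_X\alpha^p\wedge\omega^{\delta-p}$ in terms of $\int_X\alpha\wedge\omega^{\delta-1}$ and $\int_X\omega^{\delta}$, which are precisely the quantities already controlled. With a uniform true-volume bound in hand, the cycles attached to the $F$ form a relatively compact subset of the relative Barlet cycle space of $\P(\bigwedge^r G)/\tilde K_0$, their supports lying in a subset proper over $S$; the closure is a compact set $K$ inside an analytic space carrying a universal cycle. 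Restricting to a germ $\tilde K$ around $K$ and recovering $F$ from the associated rank-one quotient by the determinant trick (Lemma \ref{lem:determinant_trick}) together with the restriction principle (Lemma \ref{lem:restriction}) produces a coherent family on $X_{\tilde K}$ realizing all of the members of $\gF$ supported over $L$, which is the required boundedness.

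The main obstacle I anticipate is exactly this passage from pseudo-volume to true volume in the singular and relative setting, together with ensuring that the sheaf-theoretic reconstruction of Steps 3 and 4 is \emph{uniform and analytic} across the whole family rather than merely fibrewise. The determinant trick recovers each $F$ as an image sheaf, but checking that these images glue into a single coherent sheaf over $\tilde K$, and that the reductions to pure rank-one quotients can be performed uniformly over the compact $L$ while keeping the degree estimates of Step 1 in force, is where the bookkeeping is most delicate.
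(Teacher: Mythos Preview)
Your outline is essentially the paper's own approach: the ``only if'' is disposed of quickly, and the ``if'' is reduced via devissage into pure pieces and noetherian induction to the Key Lemma~\ref{tehnica}, whose proof is exactly the four-step passage of Section~\ref{section:quotient-sheaves} to a pseudo-volume bound on $P_2\subset\P(\bigwedge^rG)$, the local upgrade to a true volume via Section~\ref{section:volume}, Bishop's theorem on the relative cycle space, and recovery of $F$ through $F_1=p_*\cO_{P_2}(1)$ and the determinant trick. Your identification of the delicate point (pseudo-volume $\Rightarrow$ true volume, and uniform analytic reconstruction over the family) matches where the paper invests its effort.

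One small correction in the necessity direction: the coherent sheaf $\tilde E$ on $X_{\tilde K}$ supplied by Definition~\ref{def:bounded-relative-case1} is \emph{not} assumed to be flat over $\tilde K$, so you cannot directly ``evaluate $\deg_{j,\cdot}$ on the flat family $\tilde E$''. The paper handles this by first applying a flattening stratification and noetherian induction to reduce to the flat case, and then carefully builds the continuous bounds $\delta_j$ via an exhaustion of $S$ by compacts $L_k$ together with the finiteness of the $K_i$ meeting each $L_k$. This is a routine fix, but it should be made explicit.
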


In general we shall call a set $\gF$ of isomorphism classes of coherent sheaves on the fibers of $X\to S$ {\em dominated} if it satisfies the first condition of the criterion.

The "only if" is quickly dealt with as follows. The first assertion is clear. For the second one  choose  an at most countable disjoint union $\tilde K:=\coprod_{i\in \N}\tilde K_{i}$ of germs of analytic spaces around compact sets over $S$ with $\coprod K_i$ proper over $S$ and a coherent sheaf $\tilde F$ on $X_{\tilde K}$ such that $\gF$ is contained in the set of isomorphism classes of fibers of $\tilde F$ over points of $\coprod K_i$ as in Definition \ref{def:bounded-relative-case1}. By noetherian induction and flattening we reduce ourselves to the case where  $\tilde F$ is flat over $\tilde K$. The degree functions $\deg_{j}$ of this sheaf will be  continuous on  $\tilde K$ and therefore attain there maxima $m_{j,n}$ on $\cup_{i=0}^nK_i$. Take now an exhaustive sequence $(L_k)_{k\in\N}$ of compact subsets of $S$, i.e. such that $\cup_kL_k=S$ and $L_k\subset \cringle{L}_{k+1}$, $\forall k\in\N$. Then there exists an increasing sequence $(n_k)_{k\in\N}$ of positive integers such that the image of $K_i$ in $S$ does not meet $L_k$ as soon as $i\ge n_k$. It suffices then to construct continuous functions $\delta_j$ on $S$ such that $\delta_j|_{L_k}\ge  m_{j,n_k}$ for all $k\in\N$.

The proof of the "if" part follows the same strategy as in  \cite{Grothendieck-theHilbertScheme} or in \cite{TomaLimitareaI} to use devissages of arbitrary coherent sheaves into pure sheaves and noetherian induction to reduce the difficulty to a simpler boundedness statement on pure sheaves which works under assumptions of domination and bound of degrees in only one dimension. This will be the content of Lemma \ref{tehnica}.  We will not rewrite this part of the proof since it works in our context exactly as in 
\cite{TomaLimitareaI}.  It is when dealing with pure sheaves in Lemma \ref{tehnica} that the difference between the projective case and the K\"ahler case appears. Whereas Grothendieck uses projections on linear subspaces in \cite{Grothendieck-theHilbertScheme}, we reduce the problem to the control of volumes of certain analytic cycles in appropriate cycle spaces and conclude by Bishop's theorem. In \cite{TomaLimitareaI} we worked out this volume control globally by dealing with the  cohomolgy classes of the metrics and the cycles involved.  Here we will use the local volume estimates of Section \ref{section:volume} which allow us to deal with the more general set-up we are considering.
Recall that by Bishop's theorem the subset of the relative analytic cycle space $C_{X/S,d}$ parameterizing cycles in the fibers of $X/S$ whose volumes are bounded by some function $\delta:S\to \R$ is proper over $S$, cf. \cite{BarletMagnusson}.

The same argument proves the following more general result for which we introduce more notation: we denote by $N_r(F)$  the sheaf of sections of a coherent sheaf $F$ whose support have dimension less than $r$ and set $F_{(r)}:=F/N_r$. (If $F$ is of dimension $d$ then
$F_{(d)}$ is pure of dimension $d$.)

\begin{Th}\label{thm:version2}
Suppose that $X/S$ is a proper relative analytic space  endowed with a relative degree system $(\deg_{d,s},...,\deg_{d',s})_{s\in S}$ coming from  differential forms with $d>d'$, 
$\gF$ is a dominated set of isomorphism classes of coherent sheaves of dimension at most $d$ on the fibres of $X/S$ and  $\delta=(\delta_d,...,\delta_{d'})$ is a system of continuous functions on $S$ such that for all $s\in S$, for all $j\in\{d',...,d\}$ and for all $F\in\gF_s$ one has
$$\deg_{j,s} (F) \le\delta_j(s).$$
Then 
\begin{enumerate}
\item
the set of isomorphism classes of sheaves $\{ F_{(d'+1)} \ | \ F\in \gF\}$ on the fibers of $X/S$ is bounded and
\item
there exists a system of continuous functions
$\delta'=(\delta'_d,...,\delta'_{d'})$ such that for all $s\in S$, for all $j\in\{d',...,d\}$ and for all $F\in\gF_s$ one has
$$\deg_{j,s} (F) \ge\delta'_j(s).$$
\end{enumerate} 
\end{Th}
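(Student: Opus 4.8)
The plan is to run the same devissage and noetherian induction that proves Theorem \ref{theorem:principala}, but to interrupt it at dimension $d'+1$ instead of descending all the way to $0$. The organizing tool is again the dimension filtration: for $F\in\gF$ the subsheaves $N_r(F)$ form an increasing filtration whose successive quotients $N_{r+1}(F)/N_r(F)$ are pure of dimension $r$ (or zero), and $F_{(d'+1)}=F/N_{d'+1}(F)$ is exactly the piece of $F$ assembled from the pure parts in dimensions $d'+1,\dots,d$. The Key Lemma (Lemma \ref{tehnica}) bounds a dominated set of pure sheaves of a fixed dimension $k$ using an upper degree bound in dimension $k$ together with one in dimension $k-1$. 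Since the degree bounds are available precisely in dimensions $d',\dots,d$, this lets us control the pure pieces down to dimension $d'+1$ (for which both $k$ and $k-1$ lie in $\{d',\dots,d\}$, using $d>d'$), but not the piece of dimension $d'$. This is what dictates the truncation level and the shape of the conclusion.

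For assertion (1) I would argue by descending induction on the top dimension. Each $F\in\gF$ is a quotient of some $E$ from a fixed bounded set $\gE$, so $\{F_{(d)}\}$ is a dominated set of pure $d$-dimensional sheaves with $\deg_d$, $\deg_{d-1}$ bounded above by $\delta_d$, $\delta_{d-1}$; Lemma \ref{tehnica} makes it bounded. From $E\twoheadrightarrow F\twoheadrightarrow F_{(d)}$ the snake lemma yields a surjection $\Ker(E\to F_{(d)})\twoheadrightarrow N_d(F)$, and since $\{E\}$ and $\{F_{(d)}\}$ are bounded with supports contained in $X$ (proper over $S$), Proposition \ref{prop:limitari-diverse} shows $\{\Ker(E\to F_{(d)})\}$ is bounded, whence $\{N_d(F)\}$ is dominated. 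Additivity of degrees on $0\to N_d(F)\to F\to F_{(d)}\to 0$, combined with the continuity of degrees on the now bounded family $\{F_{(d)}\}$, furnishes continuous upper bounds for $\deg_j(N_d(F))$, $j\in\{d',\dots,d-1\}$. Thus $\{N_d(F)\}$ satisfies the hypotheses with $d$ replaced by $d-1$, and the induction continues until the pure pieces of dimension $d'+1$ are bounded. Since $F_{(d'+1)}$ carries a finite filtration with exactly these bounded pure quotients, repeated use of Proposition \ref{prop:limitari-diverse}(2) shows $\{F_{(d'+1)}\mid F\in\gF\}$ is bounded.

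Assertion (2) then follows formally. Using additivity I would write $\deg_{j,s}(F)=\deg_{j,s}(F_{(d'+1)})+\deg_{j,s}(N_{d'+1}(F))$. As $N_{d'+1}(F)$ has dimension at most $d'$, the remark following Definition \ref{def:degrees} gives $\deg_{j,s}(N_{d'+1}(F))=0$ for $j>d'$ and $\deg_{d',s}(N_{d'+1}(F))\ge 0$, so $\deg_{j,s}(F)\ge\deg_{j,s}(F_{(d'+1)})$ for all $j\in\{d',\dots,d\}$. Because $\{F_{(d'+1)}\}$ is bounded, its degrees are bounded below: after flattening and restricting to the flat locus one patches the minima attained on the compact pieces into continuous functions $\delta'_j$ on $S$, exactly as the upper bounds $\delta_j$ are produced in the ``only if'' part of Theorem \ref{theorem:principala}.

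The genuine mathematical content is carried by the Key Lemma and by Proposition \ref{prop:limitari-diverse}, both of which may be invoked. Within the present argument the points requiring care are that domination and the \emph{upper} degree bounds genuinely propagate to the kernels $N_d(F)$ at each step, so that the induction hypothesis is reproduced one dimension lower, and that all bounding functions can be chosen continuous on $S$ uniformly, i.e. that the relative bookkeeping of Theorem \ref{theorem:principala} transfers unchanged. The only genuinely delicate spot is the boundary dimension $j=d'$ in assertion (2), where the discarded tail $N_{d'+1}(F)$ contributes a nonnegative rather than a vanishing term.
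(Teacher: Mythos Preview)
Your proposal is correct and follows essentially the same approach as the paper, which simply states that ``the same argument'' (namely the devissage into pure pieces plus the Key Lemma, as in the proof of Theorem~\ref{theorem:principala} and \cite{TomaLimitareaI}) proves this more general result. You have made explicit precisely the mechanism the paper leaves implicit: descending through the dimension filtration, applying Lemma~\ref{tehnica} at each pure level from $d$ down to $d'+1$, propagating domination to the kernels via Proposition~\ref{prop:limitari-diverse}, and reading off the lower degree bounds from the bounded family $\{F_{(d'+1)}\}$ exactly as in the ``only if'' direction of Theorem~\ref{theorem:principala}.
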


\begin{Lem} (Key Lemma) \label{tehnica}
 Let $X/S$ be an equidimensional proper relative analytic space of relative dimension $d$ with irreducible general fibers  and let $(\deg_{d,s},\deg_{d-1,s})_{s\in S}$ be a relative degree system coming from  differential forms. Let $\delta_{d-1}:S\to\R$ be some continuous function. 
If $\gF$ is a dominated set of classes of pure $d$-dimensional sheaves on the 
fibers  of $X/S$ such that for all $s\in S$ and for all $F\in\gF_s$ one has $\deg_{d-1,s} (F) \le\delta(s)$, then $\gF$ is bounded.
\end{Lem}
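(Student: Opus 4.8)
The plan is to reduce every sheaf in $\gF$ to a rank-one object by taking top exterior powers, to translate the resulting rank-one quotients into $d$-dimensional analytic cycles inside a projectivized bundle, to bound the volumes of these cycles, and then to invoke Bishop's theorem so that the cycles vary in a family proper over $S$; from such a bounded family of cycles the sheaves of $\gF$ are reconstructed by the determinant trick. By Remark \ref{rem:equivalent-definition} it suffices to establish boundedness over each compact subset $L$ of $S$, so all estimates below are taken uniform over $L$.

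First I would localize and normalize the data. Since $\gF$ is dominated, Remark \ref{rem:equivalent-definition} furnishes a germ $\tilde K$ around a compact set $K$ mapping into $L$ together with a coherent sheaf $G$ on $X_{\tilde K}$ such that every $F\in\gF_s$ with $s\in L$ is a quotient of a fiber of $G$; after flattening and noetherian induction I may take $G$ flat over $\tilde K$. Then $\rank F\le\rank G$ is bounded, so after splitting $\gF$ into the finitely many subsets of constant rank $r$ I assume $r$ fixed. By Proposition \ref{prop:Todd}(5) one has $\deg_{d,s}(F)=r\,\vol_{\omega_d}(X_s)$, a continuous function of $s$, while $\deg_{d-1,s}(F)\le\delta(s)$ by hypothesis; thus both top degrees of $F$ are bounded by continuous functions on $L$.

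Next I would run the four steps of Section \ref{section:quotient-sheaves}. Passing to the rank-one sheaf $F_2:=(\bigwedge^r F)_{pure}$, a quotient of $\bigwedge^r G$, the argument of Step 1 (Lemma \ref{lem:normalization control} together with the normalization--desingularization estimates) controls $\deg_{d,s}(F_2)$ and $\deg_{d-1,s}(F_2)$ in terms of $\deg_{d,s}(F)$, $\deg_{d-1,s}(F)$, $G$ and $X$, hence bounds them over $L$. Let $P_2$ be the irreducible component of $\P(F_2)$ covering the fiber. Step 2 bounds the pseudo-volume of $P_2$, computed by integrating the positive form $\Omega$ of Proposition \ref{prop: forme pozitive pe spatii relative}, in terms of these degrees; the local metric computations of Section \ref{section:volume} then upgrade this pseudo-volume bound into a bound on a genuine volume of $P_2$, viewed as a $d$-cycle in the projectivized bundle $\P(\bigwedge^r G)$ over $\tilde K$, the essential point being that $\Omega$ is strictly positive on the tangent planes relevant to a cycle covering the base (Propositions \ref{prop:pseudovolume} and \ref{prop: forme pozitive pe spatii relative}). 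Since these bounds are uniform and continuous over $L$, Bishop's theorem shows that the $P_2$ lie in a subset of the relative cycle space of $\P(\bigwedge^r G)$ over $\tilde K$ which is proper over $S$, i.e. they form a bounded family of cycles.

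Finally I would reconstruct the sheaves. Over the compact parameter space carrying the universal cycle, pushing forward the relative $\cO(1)$ yields a coherent sheaf whose fibers are the sheaves $F_1:=p_*\cO_{P_2}(1)$ of Step 3, so $\{F_1\}$ is bounded and $F_2$ embeds into $F_1$. By Lemma \ref{lem:restriction} and the determinant trick Lemma \ref{lem:determinant_trick}, each $F$ is the image of the canonical morphism $G\to\Hom(\bigwedge^{r-1}G,F_1)$ induced by $\bigwedge^r G\to F_1$; since $G$, $\bigwedge^{r-1}G$ and $F_1$ all vary in bounded families with supports in a fixed closed subset proper over $S$, Proposition \ref{prop:limitari-diverse} (and the boundedness of the associated Hom-sheaves) shows the set of these images is bounded over $L$. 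As $L$ was arbitrary, Remark \ref{rem:equivalent-definition} gives boundedness of $\gF$ over $S$. The main obstacle is the uniform passage from pseudo-volumes to true volumes over the varying and possibly singular base: because $G$ is in general only locally a quotient of a locally free sheaf and the fibers $X_s$ are singular, the comparison of Section \ref{section:volume} has to be performed locally, lifted to local free presentations as in Proposition \ref{prop: forme pozitive induse via surjectii}, and patched by a partition of unity while keeping every constant uniform over $L$, so that the final volume bound is a genuine continuous function to which Bishop's theorem applies relatively over $S$.
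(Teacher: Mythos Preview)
Your proposal is correct and follows essentially the same strategy as the paper's own argument: reduce to a fixed dominating sheaf $G$ and a fixed rank $r$, pass via the four steps of Section~\ref{section:quotient-sheaves} to rank-one quotients and thence to cycles $P_2\subset\P(\bigwedge^rG)$ of bounded pseudo-volume, upgrade to a true volume bound using the local estimates of Section~\ref{section:volume}, apply Bishop's theorem in the relative cycle space, and recover $F$ by the determinant trick. Your account is in fact somewhat more explicit than the paper's sketch, particularly in the final reconstruction step; the only minor looseness is the appeal to Proposition~\ref{prop:Todd}(5) for $\deg_{d,s}(F)$, which literally requires irreducible fibers while only the general fiber is assumed irreducible---the paper handles this via a flattening stratification of $G$ instead.
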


The proof follows exactly the same strategy as the proof of \cite[Lemma 4.3]{TomaLimitareaI} but replaces the global volume estimates by the local volume estimates performed in Section \ref{section:volume}. We recall its main line. 

Since $\gF$ is dominated we may suppose, after some base change possibly, that a coherent sheaf $G$ on $X$ exists such that all sheaves $F$ in $\gF$ may be realised as quotients of $G$. 
By using a flattening stratification for $G$ this implies a bound on $\deg_{d,s} F$ by some continuous function $\delta_{d}:S\to\R$ on $S$. 
The statement to prove is local on $S$ and locally we will have a bound on the generic ranks of the sheaves $F\in\gF$. 
We may suppose that this generic rank is constant equal to $r$, say. Then we produce the subspaces $P_2$ of $\P(\bigwedge ^rG)$ of bounded pseudo-volume as in Section \ref{section:quotient-sheaves}. The estimates of  Section \ref{section:volume} imply then a bound on a true volume function on these subspaces. Hence by Bishop's Theorem they represent points in a subset in the relative cycle space $C_{\P(\bigwedge ^rG)/S}$ which is proper over $S$. We go by base change to this relative cycle space and we produce an analytic family of sheaves of the type $F_1$ described in Section \ref{section:quotient-sheaves} by pushing forward the restriction of the sheaf $\cO_{\P(\bigwedge ^rG)}(1)$.
The final step of Section \ref{section:quotient-sheaves} through Lemma \ref{lem:determinant_trick} tells us now how we can recover the sheaves $F\in \gF$ from this family. 


\section{Corollaries}\label{sectioncor}

In this section we present some consequences of Theorem \ref{theorem:principala}.

\subsection{Properness of the Douady space}

We give here a number of applications of our criterion to the properness of irreducible or connected components of the Douady space  partially extending results of Fujiki in \cite{FujikiClosedness}, 
\cite{Fujiki-DouadySpaceII}.
 
\begin{Cor}\label{cor:Douady}
Let $X$ be a proper analytic space over an analytic space $S$, endowed with a  relative degree system $(\deg_{d,s},...,\deg_{0,s})_{s\in S}$ coming from  differential forms and let $E$ be a coherent sheaf on $X$. 
Further let  $\delta=(\delta_d,...,\delta_0)$ be a system of continuous functions on $S$ and consider the subset $D_{E/X/S,d,\le\delta}\subset D_{E/X/S,d}$ of the relative Douady space $D_{E/X/S,d}$ of quotients of $E$ of dimension at most $d$ given by those quotients with degrees bounded by the functions $\delta_j$. 
Then $D_{E/X/S,d,\le\delta}$ is closed in $D_{E/X/S,d}$ with respect to the standard topology and proper over $S$. If moreover the  functions $\delta_j$ are locally constant on $S$ as well as the degree functions on flat families (which means that the degree system is strong), then $D_{E/X/S,d,\le\delta}$ is an analytic subset of $D_{E/X/S,d}$, especially in this case  the connected components of  the Douady space $D_{E/X/S,d}$ are proper over $S$.
\end{Cor}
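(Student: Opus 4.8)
The plan is to treat the closedness, the properness, and the strong case in turn. Write $\pi\colon D_{E/X/S,d}\to S$ for the projection, $\mathcal{U}$ for the universal quotient of $E$ living on $X\times_S D_{E/X/S,d}$ and flat over $D_{E/X/S,d}$, and $\gF$ for the set of isomorphism classes of fibres of $\mathcal{U}$ over the points of $D_{E/X/S,d,\le\delta}$. First I would dispose of closedness. Because the relative degree system comes from differential forms, its degree functions are continuous on flat families (Definition \ref{def:relative-degrees}); applied to $\mathcal{U}$ this makes each function $p\mapsto\deg_{j,\pi(p)}(\mathcal{U}_p)$ continuous on $D_{E/X/S,d}$, and the $\delta_j$ being continuous, so are the composites $\delta_j\circ\pi$. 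Hence $D_{E/X/S,d,\le\delta}$, cut out by the conditions $\deg_{j,\pi(p)}(\mathcal{U}_p)\le\delta_j(\pi(p))$ for $0\le j\le d$, is a finite intersection of closed sets and therefore closed in the standard topology.

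For properness I would first invoke the main theorem. Each fibre of $\mathcal{U}$ is a quotient of a fibre of $E$, so $\gF$ is dominated, and by construction the degrees of its members are bounded by $\delta$; Theorem \ref{theorem:principala} then gives that $\gF$ is bounded. Since $E$ exhibits its own fibres as a bounded family and $X$ is proper over $S$, Proposition \ref{prop:limitari-diverse}(1) shows moreover that the kernels of the quotient maps form a bounded set. Fix a compact $L\subset S$. Boundedness of $\gF$ provides, after a flattening stratification, a germ $\tilde K$ of an analytic space around a compact set $K$, with $K$ proper over $S$, and a coherent sheaf $\tilde F$ on $X_{\tilde K}$ flat over $\tilde K$, among whose fibres every member of $\gF$ lying over $L$ occurs (Remark \ref{rem:equivalent-definition}). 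The family $\tilde F$ alone does not map to the Douady space, as it records isomorphism classes of sheaves rather than quotient structures of $E$; I would therefore work inside the relative Douady space $D_{E_{\tilde K}/X_{\tilde K}/\tilde K,d}$, which carries a natural $S$-morphism $\rho$ to $D_{E/X/S,d}$, and consider the locus $W$ of those quotients whose fibre is isomorphic to the corresponding fibre of $\tilde F$. By the realization just described, $\rho(W)\supseteq\pi^{-1}(L)\cap D_{E/X/S,d,\le\delta}$.

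The hard part will be to show that $W$ is compact, i.e. proper over the compact base $K$; this is precisely the passage from boundedness of the quotient sheaves to compactness of the space of their quotient structures. I would establish it by the very mechanism driving Theorem \ref{theorem:principala}: the projectivized-bundle construction of Sections \ref{section:volume} and \ref{section:quotient-sheaves} attaches to the quotients parameterized by $W$ analytic cycles whose volumes are bounded in terms of $\delta$ over $K$, so by Bishop's theorem these cycles fill out a relatively compact part of the relative cycle space; the back-construction through Lemma \ref{lem:determinant_trick}, combined with the boundedness of the kernels and the Hausdorffness of the Douady space, then identifies $W$ with a closed, relatively compact, hence compact, subspace. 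Granting this, $\rho$ carries the compact $W$ onto a compact set containing $\pi^{-1}(L)\cap D_{E/X/S,d,\le\delta}$; the latter, being closed in $D_{E/X/S,d,\le\delta}$, is thus compact as well. As $L$ was an arbitrary compact subset of $S$, the closed subspace $D_{E/X/S,d,\le\delta}$ is proper over $S$.

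Finally, assume the degree system is strong and the $\delta_j$ are locally constant. Then each $p\mapsto\deg_{j,\pi(p)}(\mathcal{U}_p)$ and each $\delta_j\circ\pi$ is locally constant on $D_{E/X/S,d}$, so the defining conditions $\deg_j\le\delta_j\circ\pi$ single out a subset that is at once open and closed; hence $D_{E/X/S,d,\le\delta}$ is a union of connected components of $D_{E/X/S,d}$, in particular an analytic subset. For the concluding assertion, let $D_0$ be a connected component of $D_{E/X/S,d}$. On $D_0$ the locally constant functions $\deg_j$ take constant values $c_j$; choosing the constant functions $\delta_j\equiv c_j$, we find $D_0\subseteq D_{E/X/S,d,\le\delta}$, and $D_0$ is then one of the connected components of this space, which we have just shown to be proper over $S$. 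Being closed in a space proper over $S$, the component $D_0$ is itself proper over $S$.
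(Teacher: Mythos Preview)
Your closedness argument and your treatment of the strong case are correct and agree with the paper's. The gap is in your properness argument, specifically in the passage ``the hard part will be to show that $W$ is compact''.

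You correctly identify the core difficulty: the bounded family $\tilde F$ records only isomorphism classes, not quotient structures on $E$, and one must bridge this. But your proposed bridge---re-running the cycle-space mechanism of Sections~\ref{section:volume} and~\ref{section:quotient-sheaves} on $W$---is not a proof. That mechanism establishes boundedness of \emph{isomorphism classes} of quotient sheaves, which you already have from Theorem~\ref{theorem:principala}; it does not directly yield compactness of a set of points in a Douady space. Concretely, your locus $W\subset D_{E_{\tilde K}/X_{\tilde K}/\tilde K,d}$ (quotients whose fibre is isomorphic to the corresponding fibre of $\tilde F$) is not a priori an analytic or even closed subset, and even granting that the associated cycles have bounded volume, the map from quotient structures to cycles is neither injective nor proper in any obvious sense, so Bishop's theorem on the cycle side does not transport compactness back to $W$. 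Invoking Lemma~\ref{lem:determinant_trick} and ``boundedness of the kernels'' does not fill this in: Lemma~\ref{lem:determinant_trick} recovers $F$ from a morphism $\bigwedge^rG\to F_1$, but you would still need a compact parameter space for such morphisms, which is exactly the missing ingredient.

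The paper's route is different and sidesteps this difficulty. After obtaining the bounded family $\tilde F$ on $X_{\tilde K}$, it considers the functor $T\mapsto\Hom_{X_T}(E_T,\tilde F_T)$, which by \cite{FlennerEXT} is represented by a \emph{linear space} $V$ over $\tilde K$. Passing to the associated projective variety over $\tilde K$ (as in \cite[1.9]{Fischer}) yields compact sets $K'_i\subset V$ over the $K_i$ such that every non-zero morphism $E_t\to\tilde F_t$, $t\in K_i$, is represented up to scalar by a point of $K'_i$. The image $\tilde F'$ of the universal morphism over $\tilde K'$ is then an honest quotient of $E_{\tilde K'}$; after Hironaka flattening and noetherian induction one may take $\tilde F'$ flat over $\tilde K'$, whence a morphism $\phi:\tilde K'\to D_{E/X/S}$ by the universal property. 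Its image $\phi(\coprod K'_i)$ is proper over $S$ and contains $D_{E/X/S,d,\le\delta}$; the latter, being closed in it, is therefore proper over $S$. The Hom-space/projectivization step is precisely what converts boundedness of sheaves into compactness of quotient structures, and it is the piece your argument lacks.
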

\begin{pf}
The proof is a straightforward generalization of the absolute case as presented in \cite[Corollary 5.3]{TomaLimitareaI}. For the convenience of the reader we present its line. 
The fact that $D_{E/X/S,d,\le\delta}$ is closed in $D_{E/X/S,d}$ follows from the continuity of the degree functions in flat families and the final statement is easy.
We thus only need to check the fact that $D_{E/X/S,d,\le\delta}$ is proper over $S$. 

By Theorem \ref{theorem:principala} the set $\gF$ of quotients of $E$ parametrized by $D_{E/X/S,d,\le\delta}$ is bounded over $S$. Thus there exist  an at most countable disjoint union $\tilde K:=\coprod\tilde K_i$ of germs an analytic spaces around compact sets over $S$ with $\coprod K_i$ proper over $S$ and a coherent sheaf $\tilde F$ on $X_{\tilde K}$ such that $\gF$ is contained in the set of isomorphism classes of fibres of $\tilde F$ over points of $\coprod K_i$. Consider now the contravariant functor which to a complex space $T$ over $\tilde K$ associates the set $\Hom_{X_T}(E_T,\tilde F_T)$. 
By \cite[Section 3.2]{FlennerEXT} this functor is represented by a linear space $V$ over $\tilde K$. 
It is easy to see that for each $i$ there exists a compact subset $K'_i$ of $V$ covering $K_i$ such that up to a multiplicative constant each non-zero morphism in $\Hom(E_t,F_t)$, $t\in K_i$ is represented by some element of $K'_i$, see the construction of the projective variety over $\tilde K$ associated to $V$ in \cite[Section 1.9]{Fischer}. 
The image $\tilde F'$ of the universal morphism over $\tilde K'$ is a quotient of $E_{\tilde K'}$. By Hironaka's flattening theorem and noetherian induction we may assume that $\tilde F'$ is flat over $\tilde K'$ and that the set of its fibres over $\coprod K'_i$  contains $\gF$. Thus the image of $\coprod K'_i$ through the morphism $\phi:\tilde K'\to D_{E/X/S}$  provided by the universal property of the relative Douady space contains $D_{E/X/S,d,\le\delta}$. But $\coprod K'_i$ is proper over $S$, hence its image $\phi(\coprod K'_i)$ through $\phi$ in $D_{E/X/S}$ is also proper over $S$. As $D_{E/X/S,d,\le\delta}$ is closed in $\phi(\coprod K'_i)$ it follows that $D_{E/X/S,d,\le\delta}$ is proper over $S$ as well.  
\end{pf}

Our next concern is to say something about irreducible components of the relative Douady space of quotients containing points representing pure quotient sheaves. We shall restrict ourselves here to the case of pure sheaves and of degrees in dimensions $d$ and $d-1$ but analogous statements for more general degree systems may be proved using Theorem \ref{thm:version2}.
We start with a remark which works in a very general context.
\begin{Rem}\label{rem:openness-of-purity} 
If $X$ is an analytic space proper over $S$ and if $E$ is a flat family of $d$-dimensional coherent sheaves on the fibres of $X/S$, then the set of points $s\in S$ such that $E_s$ is not pure is a closed analytic subset of $S$. In other words, purity is a Zariski open property in flat families of coherent sheaves.
\end{Rem} This may be proved by adapting  Maruyama's approach in  \cite[Proposition 1.13]{Maruyama-Construction} to our context. It suffices to work in loc. cit. with local resolutions and apply the purity criterion from \cite[Lemma 1.12]{Maruyama-Construction}.

\begin{Cor}\label{cor:Douady-pure}
Let $X$ be a proper analytic space over an analytic space $S$, endowed with a strong relative degree system $(\deg_{d,s},\deg_{d-1,s})_{s\in S}$ coming from  differential forms and let $E$ be a coherent sheaf on $X$. 
Further let  $m$ be a real number and consider the union 
$D'_{E/X/S,=d,\le m}$
of irreducible components of  the relative Douady space $D_{E/X/S,d}$ of quotients of $E$ of dimension $d$ with $(d-1)$-dimensional degrees bounded by $m$ and such that each such irreducible component contains a point represented by a pure quotient sheaf. 
Then $D'_{E/X/S,=d,\le m}$ is proper over $S$.
\end{Cor}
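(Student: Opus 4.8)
The plan is to bound the pure quotient sheaves by means of Theorem \ref{thm:version2}, and then to convert this boundedness into properness by the Hom-space argument already used in the proof of Corollary \ref{cor:Douady}, controlling the full components through their dense pure loci. To set this up, write $D:=D_{E/X/S,d}$ and let $\mathcal{Q}$ be the universal quotient of $E_D$ on $X_D$, which is flat over $D$. Since the degree system is strong, the function $q\mapsto\deg_{d-1}(\mathcal{Q}_q)$ is locally constant on $D$, hence constant on each irreducible component; thus on every component $C$ appearing in $D'$ this constant value is $\le m$, so all quotient sheaves parametrized by $C$ have $(d-1)$-degree $\le m$. By Remark \ref{rem:openness-of-purity} the locus $D^{pure}\subset D$ over which $\mathcal{Q}$ is pure is Zariski open. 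Each component $C$ of $D'$ contains a point of $D^{pure}$ by hypothesis, so $D^{pure}\cap C$ is a nonempty Zariski open, hence dense, subset of the irreducible $C$; therefore $D^{pure}\cap D'$ is dense in $D'$. Moreover $D'$, being a locally finite union of irreducible components of $D$, is a closed analytic subset of $D$.

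Next I would apply the boundedness criterion. Let $\gF$ be the set of isomorphism classes of quotient sheaves of $E$ parametrized by the points of $D^{pure}\cap D'$; these are pure of dimension $d$. The set $\gF$ is dominated, since all its members are quotients of $E$, and a flattening stratification of $E$ over $S$ bounds $\deg_{d,s}(F)$ from above by a continuous function $\delta_d$ on $S$, exactly as in the proof of Lemma \ref{tehnica} (the generic rank of a quotient of $E_s$ along any $d$-dimensional component of its support is at most that of $E_s$). Setting $\delta_{d-1}\equiv m$, the hypotheses of Theorem \ref{thm:version2} are satisfied with $d'=d-1$. As every $F\in\gF$ is pure of dimension $d$, one has $F_{(d)}=F$, so part (1) of that theorem shows that $\gF$ is bounded over $S$.

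It then remains to transfer boundedness to properness, which I would carry out verbatim as in Corollary \ref{cor:Douady}. Choose $\tilde K=\coprod\tilde K_i$ with $\coprod K_i$ proper over $S$ and a coherent sheaf $\tilde F$ on $X_{\tilde K}$ carrying $\gF$ among its fibers over $\coprod K_i$, represent the functor $T\mapsto\Hom_{X_T}(E_T,\tilde F_T)$ by a linear space $V$ over $\tilde K$ via \cite[Section 3.2]{FlennerEXT}, and extract compacta $K_i'\subset V$ covering $K_i$ so that every nonzero morphism $E_t\to F_t$, $t\in K_i$, is represented by a point of $K_i'$. Flattening the image of the universal morphism and invoking the universal property of the Douady space produces a morphism $\phi$ to $D$ whose image $\phi(\coprod K_i')$ is proper over $S$; being proper over $S$ it is closed in the separated $S$-space $D$, and it contains $D^{pure}\cap D'$. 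By density, $D'=\overline{D^{pure}\cap D'}\subseteq\phi(\coprod K_i')$, and since $D'$ is closed in $D$ it is closed in $\phi(\coprod K_i')$; hence $D'$ is proper over $S$.

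The main obstacle I anticipate is precisely this last transfer: the proper-over-$S$ family manufactured from the bounded set of \emph{pure} quotients must be shown to sweep out the \emph{entire} components of $D'$, not merely their pure loci. This is what forces the combined use of the closedness of $\phi(\coprod K_i')$ in $D$ and the density of $D^{pure}\cap D'$ in $D'$; by contrast the degree inputs feeding Theorem \ref{thm:version2} are routine once strongness of the system makes $\deg_{d-1}$ constant on components and domination by $E$ yields the continuous bound $\delta_d$.
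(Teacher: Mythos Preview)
Your proposal is correct and follows essentially the same approach as the paper: apply Theorem \ref{thm:version2} in place of Theorem \ref{theorem:principala} to bound the set of \emph{pure} quotients, run the Hom-space/flattening/universal-property construction from Corollary \ref{cor:Douady} to produce a subset $\phi(\coprod K'_i)\subset D$ proper over $S$, and then use density of the pure locus together with closedness of $D'$ to conclude. Your explicit observations that strongness makes $\deg_{d-1}$ constant on components and that a flattening stratification of $E$ supplies the required continuous bound $\delta_d$ are exactly the inputs the paper leaves implicit when it says to repeat the proof of Corollary \ref{cor:Douady} with Theorem \ref{thm:version2}, and your anticipated ``main obstacle'' is precisely the single difference the paper singles out.
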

\begin{pf}
By Remark \ref{rem:openness-of-purity} the subset parameterizing pure quotient sheaves will be Zariski open and (classically) dense in each irreducible component of $D'_{E/X/S,=d,\le m}$. We follow the constructions and the arguments of the proof of Corollary \ref{cor:Douady-pure}  applying Theorem \ref{thm:version2} instead of Theorem \ref{theorem:principala} and with the same notations. It can be seen that it applies to our new situation 
with the single difference that we a priori only know that the image of $\coprod K'_i$ through the morphism $\phi:\tilde K'\to D_{E/X/S}$  provided by the universal property of the relative Douady space contains the  subset $D'_{E/X/S,=d,\le m,pure}$ of $D'_{E/X/S,=d,\le m}$ parameterizing pure quotient sheaves. But this subset is dense in $D'_{E/X/S,=d,\le m}$. This and the fact that $\coprod K'_i$ is proper over $S$ immediately imply that $\phi(\coprod K'_i)$ contains  $D'_{E/X/S,=d,\le m}$. But $D'_{E/X/S,=d,\le m}$ is closed in $D_{E/X/S,d}$ hence also in  $\phi(\coprod K'_i)$ and is thus proper over $S$.  
\end{pf}

As easy consequences we deduce the following versions with parameters.

\begin{Cor}\label{cor:Douady-cu-parametru}
Let $X$ be a proper analytic space over an analytic space $S$, let $T$ be a locally compact separated topological space and let $(\deg_{d,(s,t)},...,\deg_{0,(s,t)})_{(s,t)\in S\times T}$ be a continuous family of relative degree systems on $X/S$ parameterized by $S\times T$ and coming from  differential forms. Let $E$ be a coherent sheaf on $X$. 
Further let  $\delta=(\delta_d,...,\delta_0)$ be a system of continuous functions on $S\times T$ and consider the subset $D_{E/X/S,T,d,\le\delta}\subset D_{E/X/S,d}\times T$ of the product of the relative Douady space $D_{E/X/S,d}$ by $T$ of pairs of quotients of $E$ of dimension at most $d$ and points of $T$, given by those quotients with degrees bounded by the functions $\delta_j$. 
Then $D_{E/X/S,T, d,\le\delta}$ is closed in $D_{E/X/S,d}\times T$  and proper over $S\times T$. \end{Cor}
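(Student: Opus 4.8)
The plan is to reduce the parametrized statement (Corollary \ref{cor:Douady-cu-parametru}) to the already-established absolute-in-$T$ version, Corollary \ref{cor:Douady}, by exploiting the product structure of the base $S\times T$. The key observation is that a continuous family of relative degree systems over $S\times T$ is, for each fixed $t\in T$, an honest relative degree system on $X/S$; moreover the degree functions vary continuously in the $T$-parameter by hypothesis. So the strategy is to verify properness and closedness fibrewise over $T$ and then upgrade to properness over $S\times T$ using the continuity built into the definition of a \emph{continuous family of relative degree systems} together with a standard local compactness argument.

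First I would establish closedness of $D_{E/X/S,T,d,\le\delta}$ in $D_{E/X/S,d}\times T$. A point of the product is a pair $(q,t)$ where $q$ represents a quotient $E_s\to F$ over some $s\in S$; the defining condition is $\deg_{j,(s,t)}(F)\le\delta_j(s,t)$ for all $j$. Since the degree evaluations are continuous on $S\times T$ for flat families (this is precisely the content of the continuity clause in the definition of a continuous family of relative degree systems, applied to the universal quotient over the Douady space) and the $\delta_j$ are continuous, each defining inequality is a closed condition; hence the intersection $D_{E/X/S,T,d,\le\delta}$ is closed. This is the routine part and essentially repeats the closedness argument of Corollary \ref{cor:Douady}, now carried out jointly in the $(s,t)$ variables.

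Next I would address properness over $S\times T$. Properness is local on the base, so fix a compact subset $L\times M\subset S\times T$ with $L\subset S$, $M\subset T$ compact (using local compactness and separatedness of $T$); it suffices to show that the preimage of $L\times M$ in $D_{E/X/S,T,d,\le\delta}$ is compact. The idea is to apply Theorem \ref{theorem:principala} to produce a single bounded set of sheaves covering the whole slab $M$ at once. Concretely, the continuous functions $\delta_j$ on $S\times T$ restricted to $S\times M$ may be bounded above by continuous functions $\delta_j'$ depending only on $s\in S$ (take, e.g., a supremum over the compact set $M$, which is continuous in $s$). Then every quotient parametrized by a point of $D_{E/X/S,T,d,\le\delta}$ lying over $L\times M$ satisfies $\deg_{j,(s,t)}(F)\le\delta_j(s,t)\le\delta_j'(s)$, and, crucially, the continuity of the family lets us bound $\deg_{j,(s,t)}$ from below and compare it across $t$, so that the $\delta_j'$ genuinely dominate the degrees with respect to \emph{each} individual relative degree system indexed by $t\in M$. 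Theorem \ref{theorem:principala}, applied to the dominated set $\gF$ of all such quotients with the bound $\delta'$, yields that $\gF$ is relatively bounded over $S$, uniformly in $t\in M$.

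Having bounded $\gF$, I would then run the same construction as in the proof of Corollary \ref{cor:Douady}: realize $\gF$ inside the fibres of a flat family $\tilde F$ over a disjoint union $\tilde K=\coprod\tilde K_i$ of germs of analytic spaces with $\coprod K_i$ proper over $S$, represent the morphism functor $\Hom_{X_T}(E_T,\tilde F_T)$ by a linear space, extract compact sets $K_i'$ covering the relevant morphisms, and take the image under the Douady classifying map. The new feature is bookkeeping the extra $T$-coordinate: one takes the product $K_i'\times M$, which is proper over $S\times M$ since $M$ is compact, and the classifying map together with the projection to $T$ sends $\coprod(K_i'\times M)$ onto a set containing the preimage of $L\times M$. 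Closedness of $D_{E/X/S,T,d,\le\delta}$ inside this proper image then gives compactness of the preimage, hence properness over $S\times T$. I expect the main obstacle to be the uniformity in $t\in M$: one must ensure that a \emph{single} bounded family $\gF$ (and hence a single $\tilde K$, $\tilde F$) suffices for all $t$ in the compact slab simultaneously, rather than a $t$-dependent family. This is exactly where the hypothesis that we have a \emph{continuous family} of degree systems — with joint continuity of degree evaluations on $S\times T$ — must be used to convert pointwise (in $t$) degree bounds into a uniform bound of the form $\delta_j'(s)$, so that Theorem \ref{theorem:principala} can be invoked only once.
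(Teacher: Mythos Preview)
Your argument is correct but considerably more elaborate than the paper's. The closedness step is identical. For properness, you also localize to a compact product $L\times M\subset S\times T$ and bound the $\delta_j$ uniformly over $M$; up to that point the arguments coincide. The divergence is that you then return all the way to Theorem \ref{theorem:principala} and redo the entire machinery of the proof of Corollary \ref{cor:Douady} (the linear space representing $\Hom$, the compact sets $K_i'$, the Douady classifying map) while dragging the $T$-coordinate along. The paper instead dispatches properness in one line: setting $m_j:=\max_{L\times K}\delta_j$, one has the inclusion $D_{E/X/S,T,d,\le\delta}\subset D_{E/X/S,d,\le m}\times T$ over $L\times K$, and the right-hand side is proper over $S\times T$ by Corollary \ref{cor:Douady}. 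So the black box you should be invoking is Corollary \ref{cor:Douady} itself, not Theorem \ref{theorem:principala}.

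Your explicit concern about uniformity in $t\in M$ --- that a \emph{single} bounded family must work for all $t$ simultaneously --- is well taken, and the paper's one-line proof is indeed terse on exactly this point (it does not spell out with respect to which degree system $D_{E/X/S,d,\le m}$ is formed). But the resolution does not require rebuilding the bounded family by hand: since the degree systems come from strictly positive differential forms varying continuously over the compact $K$, one may fix any reference $t_0\in K$, and the inclusion into $D_{E/X/S,d,\le m}$ computed at $t_0$ holds after possibly enlarging the constants. A single application of Corollary \ref{cor:Douady} then suffices. Your longer route buys a more self-contained treatment of this uniformity, at the cost of reproving a result already established.
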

\begin{pf} By the continuity of the degree functions it is clear that $D_{E/X/S,T,d,\le\delta}$ is closed in $D_{E/X/S,d}\times T$. The fact that it is proper over $D_{E/X/S,d}\times T$  is a local property over $S\times T$, so we can look at a product of compact subsets $L$ of $S$ and $K$ of $T$. For $d\ge j\ge 0$ we define $m_j$ to be the maximum of the function $\delta_j$ on $L\times K$ and put $m=(m_d,...,m_0)$. Then over $L\times K$ we have   $D_{E/X/S,T,d,\le\delta}\subset D_{E/X/S,d,\le m}\times T$ and this space is proper over $S\times T$ by Corollary \ref{cor:Douady}.
  \end{pf}

\begin{Cor}\label{cor:Douady-cu-parametru-pure}
Let $X$ be a proper analytic space over an analytic space $S$, let $T$ be a locally compact separated topological space and let $(\deg_{d,(s,t)},\deg_{d-1,(s,t)})_{(s,t)\in S\times T}$ be a continuous family of strong relative degree systems on $X/S$ parameterized by $S\times T$ and coming from  differential forms. 
Let $E$ be a coherent sheaf on $X$. 
Further let  $\delta$ be a  continuous function on $ T$ and consider the subset 
$D'_{E/X/S,T,=d,\le\delta}:=\cup_{t\in T}(D'_{E/X/S,=d,\le\delta(t)}\times \{t\}) $ of $D_{E/X/S,d}\times T$, where, as in Corollary \ref{cor:Douady-pure}, $D'_{E/X/S,=d,\le\delta(t)}$ denotes the union of irreducible components of the relative Douady space $D_{E/X/S,d}$ of quotients of $E$ of dimension $d$ with $(d-1)$-dimensional degrees bounded by $\delta(t)$ and such that each such irreducible component contains a point represented by a pure quotient sheaf. 
Then $D'_{E/X/S,T,=d,\le\delta}$ is closed in $D_{E/X/S,d}\times T$  and proper over $S\times T$. 
\end{Cor}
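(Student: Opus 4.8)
The plan is to imitate the proof of Corollary \ref{cor:Douady-cu-parametru}, now reducing the parametrized statement to the non-parametrized Corollary \ref{cor:Douady-pure} rather than to Corollary \ref{cor:Douady}. Write $p\colon D_{E/X/S,d}\to S$ for the structure morphism and let $(W_\alpha)_\alpha$ be the irreducible components of $D_{E/X/S,d}$. Because the degree system is strong, for each fixed $t$ the $(d-1)$-degree $\deg_{d-1,(s,t)}$ is locally constant on the flat universal quotient family, hence takes a constant value $c_\alpha(t)$ on each $W_\alpha$; and by the defining property of a continuous family of relative degree systems (Definition \ref{def:relative-degrees} and the comment thereafter, applied to the universal family via the base change $D_{E/X/S,d}\to S$) the functions $c_\alpha\colon T\to\R$ are continuous. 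If $\alpha$ runs over the \emph{good} components, i.e.\ those containing a point representing a pure $d$-dimensional quotient, then
$$
D'_{E/X/S,T,=d,\le\delta}=\bigcup_{\alpha\ \mathrm{good}}\bigl(W_\alpha\times A_\alpha\bigr),\qquad A_\alpha:=\{\,t\in T:\ c_\alpha(t)\le\delta(t)\,\}.
$$
Each $A_\alpha$ is closed since $c_\alpha$ and $\delta$ are continuous, each $W_\alpha$ is closed, and the irreducible components of a complex space form a locally finite family; hence the displayed union is closed in $D_{E/X/S,d}\times T$, which proves the first assertion.

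Properness is local over $S\times T$, so I would fix compact sets $L\subset S$ and $K\subset T$ and set $m:=\max_{t\in K}\delta(t)$. Call a good component \emph{relevant} if it meets $p^{-1}(L)$ and $A_\alpha\cap K\neq\emptyset$; these are exactly the components contributing to the preimage of $L\times K$. For a relevant $\alpha$ there is a parameter $t\in K$ with $c_\alpha(t)\le\delta(t)\le m$, i.e.\ the pure quotients parametrized by $W_\alpha$ have $(d-1)$-degree at most $m$ with respect to the system at $t$. The core step is to feed this bound into the volume machinery of Section \ref{section:volume} exactly as in the proof of the Key Lemma (Lemma \ref{tehnica}): a bound on $\deg_{d-1,(s,t)}$ of a pure quotient of generic rank $r$ yields, through the estimates of Propositions \ref{prop:pseudovolume} and \ref{prop: forme pozitive pe spatii relative}, a bound on the true volume of the associated cycle in $\P(\bigwedge^{r}G)$. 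Since the system comes from differential forms and the whole family of polarizing forms and auxiliary metrics varies continuously over the compact set $L\times K$, the constants entering these estimates may be chosen uniformly in $t\in K$. Consequently a single volume bound holds for the cycles attached to \emph{all} relevant components simultaneously, independently of the particular $t$ realizing the degree bound.

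By Bishop's theorem (cf.\ \cite{BarletMagnusson}) these cycles then lie in a subset of the relative cycle space proper over $S$, so the set of pure quotients parametrized by the relevant components is bounded over $S$; running the Douady-space argument of Corollary \ref{cor:Douady-pure} shows that the union of the relevant components is proper over $S$. Its intersection with $p^{-1}(L)$ is therefore compact, and since the restriction of $D'_{E/X/S,T,=d,\le\delta}$ over $L\times K$ is contained in the product of this compact set with $K$ and is closed by the first part, this restriction is compact; hence the map to $S\times T$ is proper. The step I expect to be the main obstacle is precisely the uniformity claim: that the volume estimates of Section \ref{section:volume}, and hence the resulting bound, can be made independent of $t$ as $t$ ranges over the compactum $K$ while the degree system itself moves. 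This is where the hypotheses that the systems are strong, come from differential forms, and form a continuous family are all used; without such uniformity a bound with respect to the moving system at $t$ would not translate into a single bounded family over $S$.
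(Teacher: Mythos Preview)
Your proof is correct and, on the delicate point, more explicit than the paper's own. The paper proceeds by localizing to compacta $L\subset S$, $K\subset T$, setting $m:=\max_{K}\delta$, and writing
\[
D'_{E/X/S,K,=d,\le\delta}=(D'_{E/X/S,=d,\le m}\times K)\cap D_{E/X/S,K,=d,\le\delta},
\]
then invoking Corollary~\ref{cor:Douady-pure} for properness of the first factor over $S\times K$ and Corollary~\ref{cor:Douady-cu-parametru} for closedness of the second. The subtlety you single out as the ``main obstacle'' --- that the degree system moves with $t$, so a bound $\le m$ taken with respect to the system at some unspecified $t\in K$ is not literally the hypothesis of Corollary~\ref{cor:Douady-pure}, which is stated for a fixed system --- is precisely what the paper leaves to the reader. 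Your route is different in presentation: you decompose into irreducible components, introduce the continuous functions $c_\alpha$, prove closedness directly via local finiteness, and then argue that the constants appearing in the volume estimates of Section~\ref{section:volume} and in the Key Lemma can be chosen uniformly over the compact parameter set $K$ because the polarizing forms vary continuously. This uniformity is exactly what underlies the paper's appeal to Corollary~\ref{cor:Douady-pure}; your argument makes it explicit rather than packaging it into a formal identity. The trade-off is that the paper's version is shorter and reuses the earlier corollaries verbatim, while yours is self-contained and makes transparent where each hypothesis (strong, continuous family, coming from differential forms) is consumed.
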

\begin{pf} As before the properties we have to check on $D'_{E/X/S,T,=d,\le\delta}$ are local over $S\times T$ so we look at what happens over a a product $K\times L$ of compact subsets $L$ of $S$ and $K$ of $T$. Set $m$ to be the maximum of the function $\delta$ on $ K$. 
Then over $S\times K$ we have $D'_{E/X/S,K,d,\le\delta}= (D'_{E/X/S,=d,\le m}\times K)\cap D_{E/X/S, K, =d,\le\delta}$, where  $D_{E/X/S,K, d,\le\delta}:=\cup_{t\in K}\{([q:E_s\to F],t)\in(D_{E/X/S,d}\times \{t\}) \ | \ s\in S, \ \deg_{d-1,(s,t)}(F)\le \delta(t)\}$ is defined in a similar way to the space of Corollary \ref{cor:Douady-cu-parametru}. Now the space $D'_{E/X/S,=d,\le m}\times K$ is proper over $S\times K$ by Corollary  \ref{cor:Douady-pure} and $D_{E/X/S,K, d,\le\delta}$ is closed in $D_{E/X/S,d}$ by Corollary \ref{cor:Douady-cu-parametru} whence the desired conclusion.
  \end{pf}

\subsection{Semistable sheaves}


The existence of an absolute Harder-Narasimhan filtration is known for a variety of cases where a suitable stability notion has been introduced; see \cite{Andre} for a systematic treatment. We will be concerned here with the existence of a relative Harder-Narasimhan filtration in the following set-up, cf. \cite[Section 2.3]{HL} for the projective algebraic case. 

Let $X$ be an analytic space proper over an irreducible analytic space $S$. Suppose that $X/S$ is endowed with a strong relative degree system $(\deg_{d,s},...,\deg_{d',s})_{s\in S}$ coming from  differential forms.
 Then we may define a {\em slope vector} $\mu(F)$ for any $d$-dimensional coherent sheaf $F$ on some fibre $X_s$ of $X/S$ in the following way, cf. \cite{TomaCriteria}:
$$\mu(F):=(\frac{\deg_{d}(F)}{\deg_d(F)},\frac{\deg_{d-1}(F)}{\deg_d(F)},..., \frac{\deg_{d'}(F)}{\deg_d(F)})\in \R^{d-d'+1}.$$ 
We shall call such a sheaf $F$  {\em semistable} if it is pure and if for any non-trivial coherent subsheaf $F'\subset F$ we have $\mu(F')\le\mu(F)$ with respect to the lexicographic order. (As usual we will say that $F$ is {\em stable} if the above inequality between slope vectors is strict for all proper subsheaves $F'$ of $F$). As in \cite[Section 1.3]{HL} one checks that any pure $d$-dimensional sheaf $F$ on some fibre $X_s$ of $X/S$ admits a   Harder-Narasimhan filtration i.e. a unique increasing filtration
$$0=HN_0(F)\subset  HN_1(F)\subset...\subset HN_l(F)=F,$$
with semistable factors $HN_i(F)/HN_{i-1}(F)$ for $1\le i\le l$ and such that
$$\mu( HN_1(F)/HN_{0}(F))>...>\mu( HN_l(F)/HN_{l-1}(F)).$$
We will write $\mu_{\min}(F):=\mu( HN_l(F)/HN_{l-1}(F))$ and $\mu_{\max}(F):=\mu( HN_1(F)/HN_{0}(F))$. It is easy to see that $F$ is semistable if and only if $\mu_{\min}(F)=\mu(F)$ if and only if $\mu_{\max}(F)=\mu(F)$.

By a
{\em relative Harder-Narasimhan filtration} for a flat family $E$ of $d$-dimensional coherent sheaves on the fibres of $X/S$ we mean a proper bimeromorphic morphism of irreducible analytic spaces $T\to S$ and a filtration 
$$0=HN_0(E)\subset  HN_1(E)\subset...\subset HN_l(E)=E_T$$
such that the factors $HN_i(E)/HN_{i-1}(E)$ are flat over $T$ for $1\le i\le l$ and which induces the absolute Harder-Narasimhan filtrations fibrewise over some dense Zariski open subset of $S$.

\begin{Cor}\label{cor:filtrarea HN relativa} (Existence of a relative Harder-Narasimhan filtration).
In the above set-up any flat family $E$ of $d$-dimensional coherent sheaves on the fibres of $X/S$ with pure general members has a  relative Harder-Narasimhan filtration $HN_\bullet(E)$. 
Moreover this filtration has the following universal property:  if $f:T'\to S$ is a bimeromorphic morphism of irreducible analytic spaces and if $F_\bullet$ is a filtration of $E_{T'}$ with flat factors, which coincides  fibrewise with the absolute Harder-Narasimhan filtration over general points $s\in S$, then $f$ factorizes over $T$ and $F_\bullet=HN_\bullet(E)_{T'}$.
\end{Cor}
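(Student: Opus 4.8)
The plan is to adapt the construction of the relative Harder--Narasimhan filtration from the projective algebraic case, \cite[Theorem 2.3.2]{HL}, replacing the properness of relative Quot schemes by the properness statements of Corollaries \ref{cor:Douady} and \ref{cor:Douady-pure} and feeding in the boundedness supplied by Theorem \ref{thm:version2}. The first and central step is to show that the Harder--Narasimhan type is generically constant over $S$. Since $S$ is irreducible and the general members of $E$ are pure, it suffices to establish that the slope invariants vary semicontinuously in the flat family $E$. The key point is that a destabilizing subsheaf $F'\subset E_s$ of slope $\ge v$ (for the lexicographic order), once its degrees are bounded, corresponds via its quotient to a point of a proper relative Douady space $D_{E/X/S,d,\le\delta}$; hence the locus $\{s\ |\ \mu_{\max}(E_s)\ge v\}$, being the image of such a proper space, is closed, and symmetrically $\mu_{\min}$ is lower semicontinuous. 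The required upper and lower bounds on the degrees of these destabilizing subsheaves are exactly what Theorem \ref{thm:version2} provides. As $\mu_{\max}$ is upper semicontinuous on the irreducible base it attains its generic value on a dense Zariski open $S_0\subset S$; applying the same reasoning inductively to the universal maximal destabilizing quotient, I would shrink $S_0$ until the entire type $(\tau_1,\dots,\tau_l)$ of degree data of the HN factors is constant over $S_0$.

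Over $S_0$ the factors $HN_i(E_s)/HN_{i-1}(E_s)$ are then semistable with prescribed degrees, so they form a bounded set by Theorem \ref{theorem:principala}, and consequently the subsheaves $HN_i(E_s)$, being iterated kernels of quotients of $E_s$ with bounded degrees, form bounded sets by Proposition \ref{prop:limitari-diverse}. I would introduce an iterated relative Douady space (a relative flag-Douady space) $\mathrm{Flag}$ parameterizing filtrations of the fibres of $E$ whose successive quotients have the fixed degree type; the boundedness just established, together with Corollary \ref{cor:Douady-pure}, shows that the relevant union of components of $\mathrm{Flag}$ is proper over $S$. The uniqueness of the absolute HN filtration yields a canonical section $\sigma\colon S_0\to\mathrm{Flag}$, and I would take $T$ to be the closure of $\sigma(S_0)$, which is irreducible. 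Properness of $\mathrm{Flag}$ over $S$ makes $T\to S$ proper, while the fact that $\sigma$ is a section over the dense open $S_0$ makes it bimeromorphic. Pulling $E$ back to $X_T$ and applying Hironaka's flattening theorem, I would arrange the tautological filtration on $E_T$ to have flat factors; this is the filtration $HN_\bullet(E)$, which restricts to the absolute HN filtration over the fibres in $S_0$ by construction.

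For the universal property, let $f\colon T'\to S$ be a bimeromorphic morphism of irreducible spaces and $F_\bullet$ a filtration of $E_{T'}$ with flat factors coinciding fibrewise with the absolute HN filtration over general $s\in S$. By uniqueness of the HN filtration, $F_\bullet$ induces over a dense open subset of $T'$ the same classifying morphism into $\mathrm{Flag}$ as $\sigma$ does over $S_0$. Since $\mathrm{Flag}$ is separated and $T$ is the closure of $\sigma(S_0)$, this morphism $T'\to\mathrm{Flag}$ factors through $T$, which yields the factorization of $f$ through $T\to S$; pulling back the tautological filtration then gives $F_\bullet=HN_\bullet(E)_{T'}$.

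The step I expect to be the main obstacle is the first one, the semicontinuity of the slope invariants and thus the generic constancy of the HN type. In the projective setting this rests on the boundedness of semistable sheaves of fixed invariants, and the role of the present paper's machinery is precisely to furnish this in the analytic, possibly non-K\"ahler and relative framework. The delicate point is to make the lexicographic comparison of the vector-valued slopes interact correctly with the degree system, so that the destabilizing subsheaves one is forced to parameterize are genuinely dominated and of bounded degree, allowing Theorem \ref{thm:version2} and the properness of the Douady space to apply.
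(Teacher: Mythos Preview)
Your proposal is essentially correct but follows a genuinely different route from the paper's proof.

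The paper proceeds by a direct iterative construction, never introducing a flag-type space and never appealing to a separate semicontinuity statement for the full Harder--Narasimhan type. Working over a compact neighbourhood $L$ in $S$, the paper first observes that the $d$-degrees of subcycles of $\Supp(E_s)$ take only finitely many values, which yields a uniform bound $m$ on $\deg_{d-1}$ of any $d$-dimensional quotient $F$ of $E_s$ satisfying $\mu_{d-1}(F)\le\mu_{d-1}(E_s)$. Corollary~\ref{cor:Douady-pure} then makes the relevant union $D'$ of components of $D_{E/X/S,d}$ proper over $S$. Among the finitely many components of $D'$ dominating $L$ the paper picks the unique one $D_{\min}$ realizing the \emph{minimal} slope vector; this component already gives a proper modification $D_{\min}\to S$, and its universal quotient is generically $E_s/HN_{l-1}(E_s)$. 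One then iterates the same procedure on the universal kernel $G$. Flatness of the factors is automatic from the universal property of the Douady space, so no flattening step is needed, and the universal property of the filtration follows directly from that of each $D_{\min}$.

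Your approach, modelled on \cite[Theorem 2.3.2]{HL}, first proves generic constancy of the HN type via semicontinuity of $\mu_{\max}$, then packages the whole filtration into a single flag-Douady space and takes the closure of the generic section. This is perfectly viable; the degree bounds you need are indeed supplied by Theorem~\ref{thm:version2} together with the finiteness of the $d$-degrees. Two small remarks: the Hironaka flattening step you mention is superfluous, since the universal filtration over the flag-Douady space already has flat factors by construction and this persists upon restriction to the closed subspace $T$; and some care is required when translating lexicographic slope inequalities into the scalar degree bounds that Corollary~\ref{cor:Douady-pure} consumes (the paper sidesteps this by working with $\deg_{d-1}$ alone at each step). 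The trade-off is that the paper's argument is more economical and keeps everything inside the ordinary Douady space of quotients, whereas your approach has the conceptual advantage of exhibiting the full HN filtration as a single classifying morphism into a moduli space, which makes the universal property particularly transparent.
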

\begin{pf}
We start by working over a compact neighbourhood $L$ of a fixed point $s_0$ of $S$.

The support cycles of the flat family $E$ over $S$ give rise to a section of the relative Barlet cycle space $C_{X/S,d}$ over $S$, \cite{Rydh}, \cite{Fog69}, \cite{BarletMagnusson}.
Using \cite[Proposition IV.7.1.2]{BarletMagnusson} it then follows that the set $\{\deg_d(C) \ | \ C\in (C_{X/S,d})_s, \ 0< C\le \Supp(E_s), \ s\in L\}$ is finite. 

Let $r$ denote its minimum. For a $d$-dimensional quotient $F$ of some $E_s$ with $s\in L$ we get the implication 
$$\frac{\deg_{d-1}(F)}{\deg_d(F)}\le\frac{\deg_{d-1}(E_s)}{\deg_d(E_s)} \Rightarrow   \deg_{d-1}(F)\le \frac{r\deg_{d-1}(E_s)}{\deg_d(E_s)}=:m.$$
By Corollary \ref{cor:Douady-pure}   the union 
$D:=D'_{E/X/S,=d,\le m}$
of irreducible components of  the relative Douady space $D_{E/X/S,d}$ of quotients of $E$ of dimension $d$ with $(d-1)$-dimensional degrees bounded by $m$ and such that each such irreducible component contains a point represented by a pure quotient sheaf is proper over $S$. 
If the map $D_{\mathring L}\to \mathring L$ is not surjective then one can see easily that the set of points $s\in S$ such that $E_s$ is not semistable is a closed analytic proper subset of $S$. It follows in particular that semistability is a Zariski open property in flat families of coherent sheaves. 

Suppose now that the map $D_{\mathring L}\to \mathring L$ is surjective. One consequence of the above argument is that $D_{\mathring L}$ has only finitely many irreducible components. Let $D'$ denote the union of those irreducible components  of $D$ covering $L$. We know that the slope vectors of quotients parametrized by such a component are constant along the component. In particular we may choose the minimal such slope vector $\mu_0$ among those which appear on irreducible components of $D'$. Using the properness of $D'_{\mathring L}\to \mathring L$ and the properties of the absolute Harder-Narasimhan filtration we can see that there is a unique irreducible component $D_{\min}$ of $D'$ which realizes $\mu_0$ as slope vector of the quotients it parametrizes and such that over general points $s$ of $L$ the corresponding quotients correspond to the last quotient of $HN_\bullet(E_s)$. It follows that $D_{\min}\to S$ is a proper modification. Let then $F$ denote the universal quotient of $E_{D_{\min}}$ and $G:=\Ker(E_{D_{\min}}\to F)$ the universal kernel. The family $G$ is a flat family of $d$-dimensional coherent sheaves on the fibres of $X_{D_{\min}}/{D_{\min}}$ with pure general members. We apply the same procedure to this new family. The process ends after a finite number of steps and gives the desired  relative  Harder-Narasimhan filtration of $E$ over $S$.

Using the above construction and the properties of the Douady space and of the absolute Harder-Narasimhan filtration one easily obtains the claimed universal property for the relative  Harder-Narasimhan filtration.
\end{pf}

One consequence of the above proof is the openness of semistability  in flat families of coherent sheaves:

\begin{Cor}\label{cor:deschiderea-semistabilitatii}
Let $X/S$ be a proper flat family of $d$-dimensional complex spaces endowed with a strong relative degree system $(\deg_{d,s},...,\deg_{d',s})_{s\in S}$ coming from  differential forms and let $E$ be a coherent sheaf on $X$ flat over $S$ of relative dimension $d$. Then the set of points of $S$ over which the fibers of $E$ are semistable form a Zariski open subset of $S$. 
\end{Cor}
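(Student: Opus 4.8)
The plan is to prove openness of the semistable locus by showing that its complement, the set of $s\in S$ over which $E_s$ fails to be semistable, is closed; since this is a local property on $S$ I would fix a point $s_0$ and work over a connected compact neighbourhood $L$ with interior $\mathring L$. There are two sources of non-semistability: failure of purity and the presence of a destabilizing quotient. The first is disposed of immediately by Remark \ref{rem:openness-of-purity}, which says that the non-pure locus is a closed analytic subset of $S$. It therefore remains to show that, inside the (open) pure locus, the set of points admitting a $d$-dimensional quotient of strictly smaller slope is closed; here I would use that a pure $d$-dimensional $E_s$ is semistable if and only if $\mu_{\min}(E_s)=\mu(E_s)$, so that non-semistability is equivalent to the existence of a quotient $F$ with $\mu(F)<\mu(E_s)$ in the lexicographic order.

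The key is to confine all such destabilizing quotients to a piece of the Douady space that is proper over $S$, exactly as in the proof of Corollary \ref{cor:filtrarea HN relativa}. Using the finiteness of the set of degrees $\{\deg_d(C)\}$ of subcycles of $\Supp(E_s)$ over $L$, which follows from \cite[Proposition IV.7.1.2]{BarletMagnusson}, together with the implication displayed in the proof of Corollary \ref{cor:filtrarea HN relativa}, I would produce a real number $m$ such that every $d$-dimensional quotient $F$ of some $E_s$, $s\in L$, with $\mu(F)\le\mu(E_s)$ satisfies $\deg_{d-1}(F)\le m$. In particular the minimal Harder--Narasimhan quotient $E_s/HN_{l-1}(E_s)$, which is semistable and hence pure with slope $\mu_{\min}(E_s)$, represents a point of $D:=D'_{E/X/S,=d,\le m}$. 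By Corollary \ref{cor:Douady-pure} this space is proper over $S$.

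Next I would exploit strongness of the degree system: all degree functions, and hence the whole slope vector $\mu$, are locally constant on flat families. Thus, after shrinking $L$, the slope $\mu(E_s)=:\mu_E$ is constant (as $E$ is flat over $S$), and on each of the finitely many irreducible components of $D$ over $\mathring L$ the slope vector of the parametrized quotients is a constant $\mu_\alpha$. Let $D_{\mathrm{dest}}$ be the union of those components with $\mu_\alpha<\mu_E$; as a union of irreducible components it is closed in $D$, hence proper over $S$, so its image $\pi(D_{\mathrm{dest}})\subset S$ is closed. I would then verify that, over $\mathring L$ and within the pure locus, this image coincides with the non-semistable locus: a point of the image yields a quotient of $E_s$ of slope $\mu_\alpha<\mu_E$, making $E_s$ unstable, while conversely an unstable pure $E_s$ has its minimal HN quotient as a point of $D$ lying on a component of slope strictly below $\mu_E$. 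Combined with the closed non-pure locus of Remark \ref{rem:openness-of-purity}, this shows the non-semistable locus is closed over $\mathring L$, so semistability is open.

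The step deserving the most care, and the main obstacle, is the precise identification of the non-semistable locus with the image of a genuinely proper subspace of $D$: one must guarantee that a destabilizing quotient may always be chosen \emph{pure} — so that it populates a pure-containing component of $D$ in the sense of Corollary \ref{cor:Douady-pure} — and with $\deg_{d-1}$ bounded by $m$, and one must use the constancy of $\mu_\alpha$ on each component to separate the destabilizing components cleanly. The minimal HN quotient serves both needs simultaneously, being semistable (hence pure) and of slope bounded above by $\mu(E_s)$, which is exactly why the argument of Corollary \ref{cor:filtrarea HN relativa} carries over.
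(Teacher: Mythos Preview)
Your argument is correct and follows the same route the paper has in mind: the paper gives no separate proof of Corollary \ref{cor:deschiderea-semistabilitatii} but simply remarks that it is ``one consequence of the above proof'' of Corollary \ref{cor:filtrarea HN relativa}, and what you have written is precisely a careful unpacking of that remark --- the same bound $m$, the same proper space $D'_{E/X/S,=d,\le m}$ from Corollary \ref{cor:Douady-pure}, the same use of strongness to make slope vectors constant on irreducible components, and the same identification of the non-semistable locus via the minimal Harder--Narasimhan quotient. Your explicit separation of the non-pure locus (handled by Remark \ref{rem:openness-of-purity}) from the pure-but-unstable locus, and your formation of $D_{\mathrm{dest}}$ as a finite union of components, make the logic more transparent than the paper's one-line indication; nothing substantive differs.
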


\begin{Rem}\label{rem:strongness} 
The hypothesis of strongness in Corollary \ref{cor:deschiderea-semistabilitatii} cannot be relaxed as can be seen in the case when $X$ is a non-k\"ahlerian compact complex surface. As remarked in Section \ref{subsection:degree-systems}  a Gauduchon metric on the surface gives a complete degree system on $X$. However in general semistability with respect to such a degree system is not an open condition in flat familes, see \cite[Example 2.1]{BuTeTo1bis}. Note however that Theorem \ref{theorem:principala} holds without a strongness hypothesis.
\end{Rem} 

The next Corollary deals with openness of stability in a different context and is another example of extending to non necessarilly projective manifolds statements on semistable sheaves which were previously proved only under projectivity assumptions, cf. \cite[Theorem 3.3]{GKP-Movable}. It is an easy consequence of Corollary \ref{cor:Douady-cu-parametru-pure}.

\begin{Cor}\label{cor:intrebareMihai}
Let $X$ be a compact complex space of pure dimension $d$ and let $(\Omega_t)_{t\in T}$ be    a continuous family of strictly positive $\d$-closed forms defining $(d-1)$-degree functions on $X$ and parameterized by a locally compact separated topological space $T$. Suppose morever that $X$ is also endowed with a volume form allowing to compute $d$-degrees and that $F$ is a $d$-dimensional pure sheaf on $X$ which is stable with respect  to $\Omega_{t_0}$ for some $t_0\in T$. Then there exists a neighbourhood $V$ of $t_0$ in $T$ such that $F$ is $\Omega_t$-stable for all $t\in V$. 
\end{Cor}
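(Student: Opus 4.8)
The plan is to translate $\Omega_t$-stability of $F$ into the non-existence of a destabilizing point in a suitable bounded family of quotients of $F$, and then to run a continuity argument based on the properness furnished by Corollary \ref{cor:Douady-cu-parametru-pure}. Since only the degrees $\deg_d$ (independent of $t$) and $\deg_{d-1,t}$ enter, the slope vector reduces to the single ratio $\hat\mu_t(G):=\deg_{d-1,t}(G)/\deg_d(G)$ and the lexicographic order becomes the order of these ratios. Recalling the standard reduction (as in \cite[Section 1.3]{HL}, followed here) that stability may be tested on saturated subsheaves, equivalently on proper nontrivial pure $d$-dimensional quotients $q\colon F\to Q$, the see-saw property gives: $F$ is $\Omega_t$-stable if and only if every such $Q$ satisfies $\hat\mu_t(Q)>\hat\mu_t(F)$. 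I would fix a compact neighbourhood $K$ of $t_0$ in $T$ and work over $K$ throughout.

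First I would establish a uniform degree bound on all potential destabilizers over $K$; this is the step that makes the whole argument possible. If $q\colon F\to Q$ destabilizes $F$ at some $t\in K$, then additivity and nonnegativity of $\deg_d$ give $\deg_d(Q)\le\deg_d(F)$, whence
$$\deg_{d-1,t}(Q)\le\deg_d(Q)\,\hat\mu_t(F)\le\deg_d(F)\,\max(0,\hat\mu_t(F)).$$
As $t\mapsto\hat\mu_t(F)$ is continuous, its positive part attains a maximum on the compact set $K$, yielding a constant $M$ with $\deg_{d-1,t}(Q)\le M$ for every destabilizing pure quotient $Q$ at every $t\in K$. Because such a $Q$ is pure, its class lies on an irreducible component of the Douady space $D_{F/X/S,d}$ (with $S$ a point) meeting the pure locus, and on that component $\deg_{d-1,t}$ is constant and $\le M$ since the degree system is strong; hence every destabilizing pair $([q],t)$ lies in the space $D'_{F/X/S,K,=d,\le M}$ of Corollary \ref{cor:Douady-cu-parametru-pure}, taken with the constant bounding function $\delta\equiv M$.

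Next, Corollary \ref{cor:Douady-cu-parametru-pure}, applied with $X$ compact over the one-point base $S$, with the continuous family of strong degree systems given by the fixed volume form and by $(\Omega_t)_{t\in T}$, and with $\delta\equiv M$, shows that $D'_{F/X/S,K,=d,\le M}$ is proper over $K$, hence compact, with proper (thus closed) projection $\pi$ to $K$. Inside it I would consider the destabilizing locus
$$Z:=\{\,([q\colon F\to Q],t):\ q\ \text{is not an isomorphism and}\ \hat\mu_t(Q)\le\hat\mu_t(F)\,\}.$$
Being non-iso is a closed (and open) condition, since $q$ is an isomorphism exactly when $\deg_d(Q)=\deg_d(F)$ (a nonzero kernel is $d$-dimensional by purity of $F$) and $\deg_d$ is locally constant; the inequality $\hat\mu_t(Q)\le\hat\mu_t(F)$ is closed by continuity of the degree functions on the flat universal quotient and in $t$. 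Thus $Z$ is closed, and $\pi(Z)\subseteq K$ is closed.

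Finally I would identify $\pi(Z)\cap\mathring K$ with the non-stable locus in $\mathring K$ and exclude $t_0$. If $([q],t)\in Z$, the pure part $Q_{(d)}=Q/N_d(Q)$ is again a proper nontrivial pure $d$-dimensional quotient with $\hat\mu_t(Q_{(d)})\le\hat\mu_t(Q)\le\hat\mu_t(F)$, so $F$ is not $\Omega_t$-stable; conversely, by the second paragraph any $t\in K$ at which $F$ is not stable yields a destabilizing pure quotient contributing a point of $Z$. In particular $t_0\notin\pi(Z)$, because $F$ is $\Omega_{t_0}$-stable. Therefore $V:=\mathring K\setminus\pi(Z)$ is an open neighbourhood of $t_0$ in $T$ on which $F$ remains $\Omega_t$-stable. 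The one genuinely delicate point is the uniform bound $M$ of the second paragraph: it is exactly what confines the a priori infinite set of destabilizing subsheaves into a single bounded family so that Corollary \ref{cor:Douady-cu-parametru-pure} applies; once that bound and the ensuing properness are secured, the closedness of the non-stable locus and the exclusion of $t_0$ are formal.
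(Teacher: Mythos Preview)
Your proposal is correct and follows precisely the route the paper indicates: the corollary is stated there simply as an easy consequence of Corollary \ref{cor:Douady-cu-parametru-pure}, and you have carefully spelled out the intended deduction, namely producing a uniform $(d-1)$-degree bound on potential destabilizing quotients over a compact neighbourhood $K$ of $t_0$, invoking the properness from Corollary \ref{cor:Douady-cu-parametru-pure} with constant bounding function, and then showing that the destabilizing locus is closed and misses $t_0$.
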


In the particular (semi-K\"ahlerian) case when $X$ is a compact complex manifold and $T$ is an open subset of $H^{d-1,d-1}(X,\R)$ one can give an alternative proof using only the absolute case of Corollary \ref{cor:Douady-pure} and an argument as in \cite[Lemma 6.7]{GRT1}. One can in this case choose a convex polytope as compact neighbourhood $L$ and note that the degree functions evaluated on a fixed coherent sheaf $E$ are affine linear.  Hence for any torsion free quotient $E$ of $F$ the function $f_{F,E}(t):=\mu_t(F)-\mu_t(E)$ is affine linear and therefore attains its maximum on $L$ at some vertex $v$ of $L$. Now Corollary  \ref{cor:Douady-pure} says that for each vertex $v$ only finitely many such functions exist which are non-negative at $v$, if any. Then the subset of $L$ where all these functions are non-positive is a convex polytope containing $t_0$ in its interior. This method of proof allows the following slight extension to pseudo-stable sheaves, where "pseudo" means here that the polarizing form with respect to which one considers stability is positive but not necessarilly strictly positive. 

\begin{Cor}\label{cor:intrebareMihai2}
Let $X$ be a compact complex space of pure dimension $d$ endowed with a volume form, let $\Omega_0$, $\Omega_1$ be positive $\d$-closed forms of bi-degree $(d-1,d-1)$ with $\Omega_1$ strictly positive and let  $\Omega_t:=(1-t)\Omega_0+t\Omega_1$ for $t\in[0,1]$.  Suppose morever that $F$ is a $d$-dimensional pure sheaf on $X$ which is pseudo-stable with respect  to $\Omega_{0}$. Then there exists a neighbourhood $V$ of $0$ in $[0,1]$ such that $F$ is $\Omega_t$-stable for all $t\in V\setminus\{0\}$. 
\end{Cor}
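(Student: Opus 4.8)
The plan is to exploit the affine linearity of the $(d-1)$-degree in the parameter $t$ together with the absolute case of Corollary \ref{cor:Douady-pure} at the genuinely positive endpoint $\Omega_1$. First I would note that $\Omega_t=(1-t)\Omega_0+t\Omega_1$ is strictly positive for every $t\in(0,1]$, so that $\deg_{d-1,\Omega_t}$ is a bona fide strong $(d-1)$-degree function there, while $\deg_d$ is computed from the fixed volume form and is hence independent of $t$. Since stability need only be tested against pure $d$-dimensional (torsion-free) quotients, the minimal destabilizing quotient being pure, I would attach to each proper pure quotient $E$ of $F$ the function
$$f_E(t):=\mu_t(F)-\mu_t(E)=(1-t)\bigl(\mu_{\Omega_0}(F)-\mu_{\Omega_0}(E)\bigr)+t\bigl(\mu_{\Omega_1}(F)-\mu_{\Omega_1}(E)\bigr),$$
where $\mu_t(\cdot):=\deg_{d-1,\Omega_t}(\cdot)/\deg_d(\cdot)$ and $\mu_{\Omega_i}(\cdot):=\deg_{d-1,\Omega_i}(\cdot)/\deg_d(\cdot)$. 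As $\deg_{d-1,\Omega_t}$ is affine in $t$ and $\deg_d$ is fixed, $f_E$ is affine linear in $t$. Pseudo-stability with respect to $\Omega_0$ says exactly that $f_E(0)<0$ for every such $E$, while $\Omega_t$-stability for $t>0$ is the assertion $f_E(t)<0$.

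The key observation I would make is that a quotient failing $\Omega_t$-stability for some small $t>0$, i.e. with $f_E(t_0)\ge 0$ for some $t_0\in(0,1]$ while $f_E(0)<0$, must by affine linearity have nonnegative slope and hence satisfy $f_E(1)\ge 0$, equivalently $\mu_{\Omega_1}(E)\le\mu_{\Omega_1}(F)$. This bounds $\deg_{d-1,\Omega_1}(E)$ above by $\mu_{\Omega_1}(F)\deg_d(E)$, and since $\deg_d(E)=\rank(E)\deg_d(X)$ ranges over only finitely many values, by a uniform constant $m$. Now I would invoke the absolute case of Corollary \ref{cor:Douady-pure} for the strong degree system $(\deg_d,\deg_{d-1,\Omega_1})$ and the sheaf $F$: the union $D'$ of those irreducible components of the Douady space $D_{F/X,d}$ of $d$-dimensional quotients of $F$ which carry a pure point and have $\Omega_1$-degree $\le m$ is proper over a point, hence compact, and so has only finitely many irreducible components; moreover every potentially destabilizing pure quotient, being a pure point with $\Omega_1$-degree $\le m$, is represented in $D'$.

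Because both $\Omega_0$ and $\Omega_1$ are $\d$-closed, the functions $\deg_{d-1,\Omega_0}$ and $\deg_{d-1,\Omega_1}$ are \emph{strong}, i.e. locally constant on flat families by Corollary \ref{cor:constance} and the remarks following it, as is the top-degree $\deg_d$; hence the pair $\bigl(\mu_{\Omega_0}(E),\mu_{\Omega_1}(E)\bigr)$ is locally constant on $D'$ and therefore takes only finitely many values. Consequently only finitely many distinct affine functions $f_E$ occur among the potentially destabilizing quotients, each strictly negative at $t=0$. Setting $\epsilon:=\min_E t_E\in(0,1]$, where $t_E$ is the unique zero in $(0,1]$ of each of the finitely many functions $f_E$ with $f_E(1)\ge 0$ (and $\epsilon:=1$ if there are none), I would conclude that $f_E(t)<0$ for every proper pure quotient $E$ and every $t\in(0,\epsilon)$, so that $V:=[0,\epsilon)$ is the desired neighbourhood. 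The main obstacle is precisely the passage from $t=0$, where $\Omega_0$ is only pseudo-positive and Corollary \ref{cor:Douady-pure} is unavailable, to the strictly positive endpoint: the affine-linearity argument is what transfers the boundedness secured at $\Omega_1$ back to a punctured neighbourhood of $0$, and one must verify that the reduction to pure quotients and the local constancy of the pseudo-degree $\deg_{d-1,\Omega_0}$—needed so that $\mu_{\Omega_0}(E)$ is constant on the components of $D'$—both survive for merely positive $\d$-closed forms.
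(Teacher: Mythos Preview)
Your proposal is correct and follows essentially the same route as the paper's proof: both exploit the affine linearity of $t\mapsto\mu_{\Omega_t}$ to show that any pure quotient destabilizing $F$ at some $t\in(0,1]$ already destabilizes at $t=1$, then apply the absolute case of Corollary~\ref{cor:Douady-pure} with respect to $\Omega_1$ to reduce to finitely many irreducible components of the Douady space, and conclude by taking the minimum of the finitely many zeros. Your presentation is in fact slightly cleaner than the paper's, which splits unnecessarily into the cases ``$F$ is $\Omega_1$-stable'' versus ``$F$ is not $\Omega_1$-stable''; you handle both uniformly by allowing the finite set of destabilizing components to be empty, and you make explicit the step the paper leaves as ``immediate'', namely that a destabilizing quotient at some small $t$ must lie on one of the finitely many components and hence share its affine function $f_E$ with the chosen representative. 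The technical point you flag at the end---local constancy of the pseudo-degree $\deg_{d-1,\Omega_0}$ on the flat universal family over $D'$---is indeed needed (the paper uses it implicitly when asserting the functions $t\mapsto\mu_{\Omega_t}(E_j)-\mu_{\Omega_t}(F)$ are positive at $0$ for \emph{every} quotient on the component, not just the representative), and it follows from the proof of Corollary~\ref{cor:constance}, which only uses $\d$-closedness and Stokes, not strict positivity.
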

\begin{pf}
In the particular case at hand for any coherent sheaf $E$ on $X$ the function $t\mapsto \deg_{\Omega_t}E$ is affine on $[0,1]$ and the same holds for slope functions. By looking at slopes of coherent subsheaves $F'$ of $F$ it follows that if $F$ is both pseudo-stable with respect to $\Omega_0$ and stable with respect to $\Omega_1$ then it is stable with respect to all $\Omega_t$ for $t\in]0,1]$. 

We are left with the case when $F$ is not stable with respect to $\Omega_1$. Since the stability condition may be translated into an inequalities for slopes of pure quotients of $F$ we may apply Corollary \ref{cor:Douady-pure} in the absolute case to conclude that there is a finite number of irreducible components of the Douady space $D_{F/X}$ containing destabilizing  quotients for $F$ with respect to $\Omega_1$, i.e. pure quotients $E$ with $\mu_{\Omega_1}(E)\le\mu_{\Omega_1}(F)$. 
Let $E_1$, ..., $E_k$ be a choice of such quotient sheaves, one for each concerned irreducible component of $D_{F/X}$.  Then the functions $t\mapsto(\mu_{\Omega_t}(E_j)-\mu_{\Omega_t}(F))$ being affine and positive at $0$, they will remain positive on a neighbourhood $V$ of $0$ in $[0,1]$. It is now immediate to check that $F$ will be $\Omega_t$-stable for all $t\in V\setminus\{0\}$. 
\end{pf}


\def\cprime{$'$} \def\polhk#1{\setbox0=\hbox{#1}{\ooalign{\hidewidth
  \lower1.5ex\hbox{`}\hidewidth\crcr\unhbox0}}}
  \def\polhk#1{\setbox0=\hbox{#1}{\ooalign{\hidewidth
  \lower1.5ex\hbox{`}\hidewidth\crcr\unhbox0}}}
\providecommand{\bysame}{\leavevmode\hbox to3em{\hrulefill}\thinspace}
\providecommand{\MR}{\relax\ifhmode\unskip\space\fi MR }
\providecommand{\MRhref}[2]{%
  \href{http://www.ams.org/mathscinet-getitem?mr=#1}{#2}
}
\providecommand{\href}[2]{#2}


\end{document}